\newtheorem{proposition}{Proposition}
\newtheorem{theorem}{Theorem}
\theoremstyle{definition}
\newtheorem{definition}{Definition}
\theoremstyle{remark}
\newtheorem{remark}{Remark}
\newtheorem{example}{Example}
\begin{document}

\title[Supermodularity and quadratic optimization with indicators]{
	Supermodularity and valid inequalities for quadratic optimization with indicators 
}

\author{Alper Atamt\"urk and Andr{\'e}s G{\'o}mez
}
\thanks{ 
	\noindent A. Atamt\"urk: Department of Industrial Engineering \& Operations Research, University of California Berkeley, CA 94720. \texttt{atamturk@berkeley.edu}\\
	A. G\'{o}mez:  Department of Industrial \& Systems Engineering, Viterbi School of Engineering, University of Southern California, CA 90089. \texttt{gomezand@usc.edu}}

\maketitle

\begin{abstract}
We study the minimization of a rank-one quadratic with indicators and show that the underlying set function obtained by projecting out the continuous variables is supermodular. Although supermodular minimization is, in general, difficult, the specific set function for the rank-one quadratic can be  minimized in linear time. We show that the convex hull of the epigraph of the quadratic can be obtaining from inequalities for the underlying supermodular set function by lifting them into nonlinear inequalities in the original space of variables. Explicit forms of the convex-hull description are given, both in the original space of variables and in an extended formulation via conic quadratic-representable inequalities, along with a polynomial separation algorithm. Computational experiments indicate that the lifted supermodular inequalities in conic quadratic form are quite effective in reducing the integrality gap for quadratic optimization with indicators. \\

\noindent
\textbf{Keywords} Quadratic optimization, supermodular inequalities, perspective formulation, conic quadratic cuts, convex piecewise valid inequalities, lifting

\end{abstract}

\begin{center} December 2020 \end{center}

\BCOLReport{20.03}

\section{Introduction}\label{sec:intro}
Consider the convex quadratic optimization problem with indicators 
\begin{equation}\label{eq:QOI}
\min\! \big\{a'x\!+\!b'y\!+\!y'Qy\! : \ y_i(1\!-\!x_i)=0, i=1,\ldots,n; \ (x,y) \in  \{0,1\}^n \! \times \R_+^n\big\}
\end{equation} 
where $a, b \in \R^n$ and $Q\in \R^{n\times n}$  is a symmetric positive semi-definite matrix. For each $i=1, \ldots, n$, the binary variable $x_i$, along with the complementary constraint $y_i (1-x_i)=0$, indicates whether $y_i$ may take positive values. Problem \eqref{eq:QOI} arises in numerous practical applications, including
 portfolio optimization \cite{B:miqp}, signal/image denoising \cite{atamturk2018signal,bach2019submodular}, 
best subset selection \cite{bertsimas2015or,cozad2015combined}, and unit commitment \cite{fgl:unit-q}.

Constructing strong convex relaxations for non-convex optimization problems is critical in devising effective solution approaches for them.
Natural convex relaxations of \eqref{eq:QOI}, where the complementary constraints $y_i(1-x_i)=0$ are linearized using the so-called ``big-$M$" constraints $y_i\leq Mx_i$, are known to be weak 
\cite[e.g.,][]{manzour2019integer}. Therefore, there is a increasing effort in the literature to better understand and describe the epigraph of quadratic functions with indicator variables. \citet{DL:ipco-qp-ind} describe lifted linear inequalities for \eqref{eq:QOI} from its continuous quadratic optimization counterpart 
over bounded variables.
\citet{BM:conv-noncov} give a characterization linear inequalities obtained by strengthening gradient inequalities of a convex objective function over a non-convex set. 

The majority of the work toward constructing strong relaxations of \eqref{eq:QOI} is based on the \emph{perspective reformulation} \cite{akturk2009strong,BLTW:mp-indicator,Frangioni2006,Gunluk2010,HBCO:on-off,Mahajan2017,Wu2017,xie2018ccp}. 
The perspective reformulation, which may be seen as a consequence of the convexifications based on disjunctive programming derived in \cite{Ceria1999}, is based on strengthening the epigraph of a univariate convex quadratic function $y_i^2\leq t$ by using its perspective $y_i^2/x_i\leq t$.  The perspective strengthening can be applied to a general convex quadratic $y'Qy$, by 
writing it as $y' (Q-D) y + y'Dy$ for a diagonal matrix $D \succ 0$ and $Q-D\succeq 0$, 
and simply reformulating each separable quadratic term $D_{ii}y_i^2$ as $D_{ii}y_i^2/x_i$~\cite{dong2015regularization,Frangioni2007,zheng2014improving}. While this approach is effective when $Q$ is strongly diagonal dominant, it is ineffective otherwise, or inapplicable when $Q$ is not full-rank as no such $D$ exists.  

To address the limitations of the perspective reformulation, a recent stream of research focuses on constructing strong relaxations of the epigraphs of simple but multi-variable quadratic functions. \citet{Jeon2017} use linear lifting to construct valid inequalities for the epigraphs of two-variable quadratic functions. \citet{frangioni2018decompositions} use extended formulations based on disjunctive programming to derive stronger relaxations of the epigraph of two-variable functions. They study heuristics and semi-definite programming (SDP) approaches to extract from $Q$ such two-variable terms. The disjunctive approach results in a substantial increase in the size of the formulations, which limits its use to small instances. \citet{atamturk2018strong} describe the convex hull of the epigraph of the two-variable quadratic function $(y_1-y_2)^2\leq t$ in the original space of variables, and \citet{atamturk2018signal} generalize this result to convex two-variable quadratic functions $a_1y_1^2-2y_1y_2+a_2y_2^2\leq t$ and show how to optimally decompose an $M$-matrix (psd with non-positive off-diagonals)
$Q$ into such two-variable terms; the results indicate that such formulations considerably improve the convex relaxations when $Q$ is an $M$-matrix, but the relaxation quality degrades when $Q$ has positive off-diagonal entries. \citet{hga:2x2} give SDP formulations for \eqref{eq:QOI} based on convex-hull descriptions of the $2x2$ case. These SDP formulations require $O(n^2)$ additional variables and constraints, which may not scale to large problems. \citet{atamturk2019rank} give the convex hull description of a rank-one function with free continuous variables, and propose an SDP formulation to tackle quadratic optimization problems with free variables arising in sparse regression. \citet{wei2020ideal,wei2020convexification} extend those results, deriving ideal formulations for rank-one functions with \emph{arbitrary} constraints on the indicator variables $x$. These formulations are shown to be effective in sparse regression problems; however as they do not account for the non-negativity constraints {on the continuous variables}, they are weak for~\eqref{eq:QOI}. 
The rank-one quadratic set studied in this paper addresses this gap and properly generalizes the perspective strengthening of a univariate quadratic to higher dimensions.

\ignore{
\todo{A natural question is how does rank-one formulations proposed here compare with Han et al. as they are both for (1)\\
AG: In theory, they coincide on the rank-one 2x2 case, and then generalize in different ways.\\
In practice, it depends. On the instances used in computations here, which are low rank, 2x2 would not do anything. In instances like the ones in the 2x2 paper, which are well-conditioned, it is unclear as it unclear how to extract rank ones. 
}
}

In the context of discrete optimization, submodularity/supermodularity plays a critical role in the design of algorithms \cite{fujishige2005submodular,grotschel1981ellipsoid,orlin2009faster} and in constructing convex relaxations to discrete problems \cite{ahmed2011maximizing,atamturk2015supermodular,AN:submodular,nemhauser1978analysis,shi2020sequence,wu2015maximizing,yu2017maximizing,yu2017polyhedral,yu2020polyhedral}. Exploiting submodularity in settings involving continuous variables as well typically require specialized arguments, e.g., see \cite{atamturk2017path,kilincc2019joint,tjandraatmadja2020convex}. A notable exception is  \citet{wolsey1989submodularity}, presenting a systematic approach for exploiting submodularity in fixed-charge network problems. As submodularity arises in combinatorial optimization, where the convex hulls of the sets under study are polyhedral, there are few papers utilizing submodularity to describe non-polyhedral convex hulls \cite{atamturk2020submodularity}, and those sets typically involve some degree of separability between continuous and discrete variables. In this paper, we show how to generalize the valid inequalities proposed in \cite{wolsey1989submodularity} to convexify non-polyhedral sets, where the continuous variables are linked with the binary variables via indicator constraints.

\subsection*{Contributions}
Here,
we study the mixed-integer epigraph of a rank-one quadratic function with indicator variables and non-negative continuous variables:
\small
\begin{align*}
X&=\bigg\{(x,y,t)\in \{0,1\}^N\times \R_+^N\times \R_+: \bigg(\sum_{i\in N^+}y_i-\sum_{i\in N^-}y_i\bigg)^2\leq t,\; y_i(1-x_i)=0, \ i\in N\bigg\}, 
\end{align*}\normalsize
where $(N^+,N^-)$ is a partition of $N := \{1, \ldots,n\}$. 
Observe that any rank-one quadratic of the form $\left(c'y\right)^2\leq t$ with $c_i\neq 0$ for all $i\in N$
can be written as in $X$ by scaling the continuous variables. If all coefficients of $c$ are of the same sign, then either $N^+=\emptyset$ or $N^-=\emptyset$, and $X$  reduces to the simpler form 
\small
$$X_+=\bigg\{(x,y,t)\in \{0,1\}^N\times \R_+^N\times \R_+: \bigg(\sum_{i\in N}y_i\bigg)^2\leq t,\; y_i(1-x_i)=0, \  i\in N\bigg\} \cdot$$
\normalsize
To the best of our knowledge, the convex hull structure of $X$ or $X_+$ has not been studied before.  Interestingly,
 optimization of a linear function over $X$ can be done in polynomial time (\S~\ref{sec:facetMin}).

\ignore{ {\color{red}Understanding $X$ is a critical building block for tackling \eqref{eq:QOI}, as any quadratic function $y'Qy$ can be (easily) written as a sum of rank-one quadratic functions (for example, by using a Cholesky decomposition of $Q$).} \todo{Such decompositions normally require a change of variables, which are inconsistent with indicator constraints. We may need to expand this, perhaps using formulations in the computational section so show how the rank-one formulation can be effectively used for the general model to better motivate the study in the paper early on. \\
}}

Our motivation for studying $X$ stems from constructing strong convex relaxations for problem \eqref{eq:QOI} by writing the convex quadratic $y'Qy$ as a sum of rank-one quadratics. Especially in large-scale applications, it is 
effective to state $Q$ as a sum of a low-rank matrix and a diagonal matrix.
Specifically, suppose that $Q=FF'+D$, where $F\in \R^{n\times r}$ and $D\in \R^{n\times n}$ is a (possibly empty) nonnegative diagonal matrix. Such decompositions can be constructed in numerous ways, including singular-value decomposition, Cholesky decomposition, or via factor models.
Letting $F_j$ denote the $j$-th column of $F$, adding  auxiliary variables $t\in \R^r$, $j=1,\dots,r$, and using the perspective reformulation, problem \eqref{eq:QOI} can be cast as
\begin{subequations}\label{eq:QOI2}
\begin{align}
\min_{x,y,t}\;&\sum_{j=1}^rt_j+\sum_{i=1}^nD_{ii}\frac{y_i^2}{x_i}\\
\text{s.t.}\;
&(F_j'y)^2\leq t_j, \ j =1, \ldots, r\\
&(x,y)\in \{0,1\}^N\times \R_+^n.
\end{align}
\end{subequations}
Formulation \eqref{eq:QOI2} arises naturally, for example, in portfolio risk minimization \cite{B:miqp},  where the covariance matrix $Q$ is the sum of a low-rank factor covariance matrix and an idiosyncratic (diagonal) variance matrix. 
When the entries of the diagonal matrix $D$ are small, the perspective reformulation is not effective in strengthening the formulation.
However, noting that $(x,F_j\circ y,t_j)\in X$, where $(F_j\circ y)_i=F_{ij}y_i$, for each $j=1,\ldots,r$, one can employ strong relaxations based on the rank-one quadratic with indicators, $X$. Our approach for decomposing $y'Qy$ into a sum of rank-one quadratics and utilizing strong relaxations of epigraphs of rank-one quadratics is analogous to employing cuts separately from individual rows of a constraint matrix $Ax \le b$ in mixed-integer linear programming.  

\ignore{
It is unclear how to use use commonly used techniques to derive good relaxations of $X$. Disjunctive programming formulations for $X$ would involve an exponential number of additional variables for $X$, which are neither practical nor useful for obtaining insights into the convex hull of $X$, $\conv(X)$; projecting out the additional variables in closed form, using for example Fourier-Motzkin elimination, 
is often a difficult process. Since the function under study is rank-one, it cannot be decomposed into sums of single- or two- variable terms. Moreover, convexifications involving free continuous relaxations are weak for $X$ and do not yield useful insights into $\conv(X)$.
}

In this paper, we present a generic framework for obtaining valid inequalities for mixed-integer nonlinear optimization problems by exploiting 
 supermodularity of the underlying set function. To do so, we project out the continuous variables and derive valid inequalities for the corresponding pure integer set and then lift these inequalities to the space of continuous variables as in \citet{nguyen2018deriving,richard2010lifting}. It turns out that for the rank-one quadratic with indicators, the corresponding set function is supermodular and holds much of the structure of $X$.
The \textit{lifted supermodular inequalities} derived in this paper are \emph{nonlinear} in both the continuous and discrete variables. 

We show that this approach encompasses several previously known convexifications for quadratic optimization with indicator variables.  
Moreover, the well-known inequalities in the mixed-integer \emph{linear} optimization literature given \cite{wolsey1989submodularity}, which include flow cover inequalities as a special case, can also be obtained via the lifted supermodular inequalities. 

Finally, and more importantly, we show that the lifted supermodular inequalities and bound constraints are sufficient to describe $\clconv(X)$.
Such convex hull descriptions of high-dimensional nonlinear sets are rare in the literature.
In particular, we give a characterization in the original space of variables. This description is defined by a piecewise valid function with exponentially many pieces; therefore, it cannot be used by the convex optimization solvers directly. To overcome this difficulty, we also give a conic quadratic representable description in an extended space, with exponentially many valid conic quadratic inequalities, along with a polynomial-time separation algorithm. 

The rank-one quadratic sets $X$ and $X_+$  appear very similar to their relaxation
\[
X_f =\bigg\{(x,y,t)\in \{0,1\}^N\times \R^N\times \R: \bigg(\sum_{i\in N}y_i\bigg)^2\leq t,\ y_i(1-x_i)=0, \  i\in N\bigg\},
\] \normalsize
where the non-negativity constraints on the continuous variables $y \ge 0$ are dropped. However, while only one additional inequality 
$\frac{\left ( \sum_{i\in N} y_i\right )^2}{\sum_{i \in N} x_i} \leq t$ is needed to describe $\clconv(X_f)$ \citep{atamturk2019rank} , the convex hulls of $X$ and $X_+$ are substantially more complicated and rich. Indeed, $\clconv(X_f)$ provides a weak relaxation for $\clconv(X_+)$, 
as illustrated in the next example.


\begin{example}\label{ex:freeVsPos}
Consider set $X_+$ with $n=3$. For the relaxation $X_f$, the closure of the convex hull is described by $0\leq x \leq 1$ and inequality $t\geq \frac{(y_1+y_2+y_3)^2}{\min\{1,x_1+x_2+x_3\}}$. 
Figure~\ref{fig:example} (A) depicts this inequality as a function of $(x_1,y_1)$ for $x_2=0.6$, $x_3=0.3$, $y_2=0.5$, and $y_3=0.2$ (fixed).
In Proposition~\ref{prop:validPos}, we give the function $f$ describing $\clconv(X_+)$. 
 Figure~\ref{fig:example} (B) depicts  $f(x,y)$ (truncated at 5) as a function of $(x_1,y_1)$ when other variables are fixed as before. 
 	We find that $\clconv(X_f)$ is a very weak relaxation of $\clconv(X_+)$ for low values of $x_1$. For example, for $x=0.01$ and $y=1$,  we find that $\frac{(1+0.5+0.2)^2}{0.01+0.6+0.3}\approx 3.18$, whereas $f(x,y)\approx 100.55$. The computation of $f$ for this example is described after Proposition~\ref{prop:validPos}.
\qed
\end{example}

\begin{figure}[!h]	
		\subfloat[$\clconv(X_f)$]{\includegraphics[width=0.49\textwidth,trim={12.7cm 5.5cm 12.8cm 5.5cm},clip]{./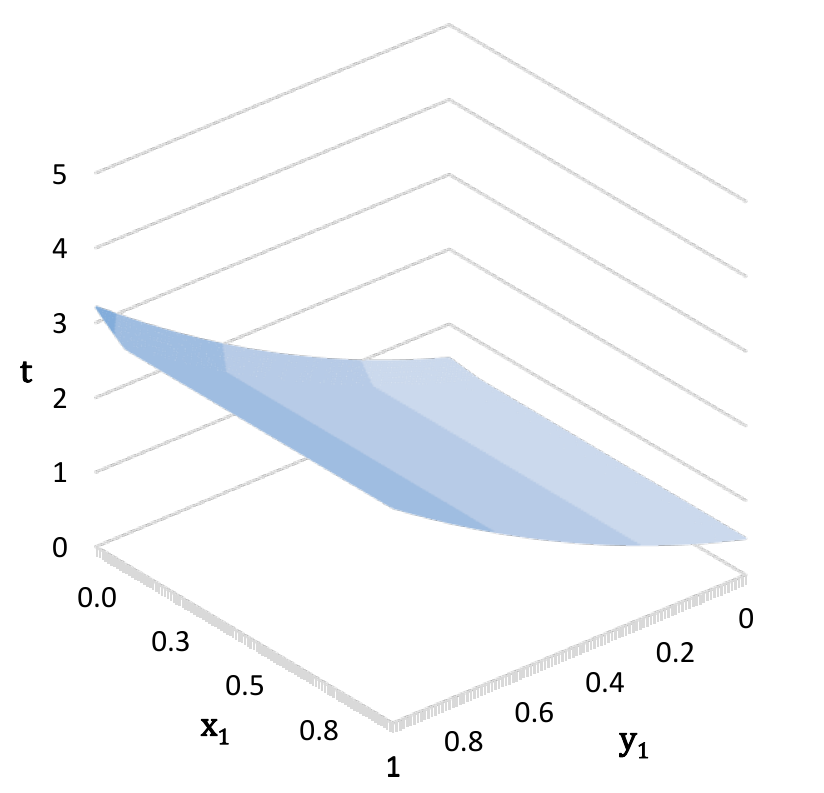}}\ \hfill
		\subfloat[$\clconv(X_+)$ (truncated)]{\includegraphics[width=0.49\textwidth,trim={12.7cm 5.5cm 12.8cm 5.5cm},clip]{./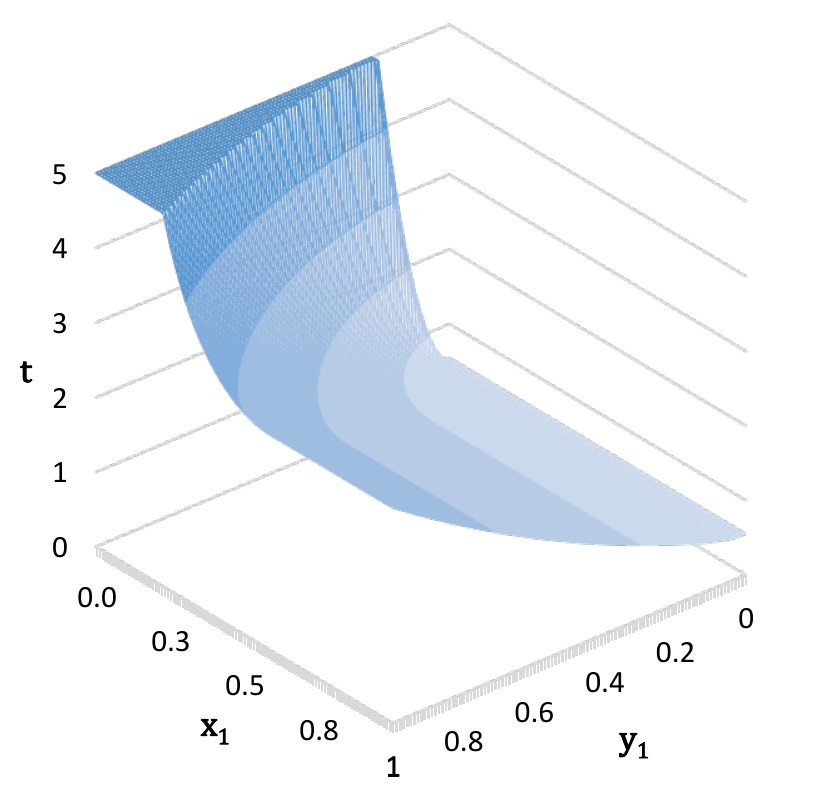}}
		
		\caption{Comparison of $\clconv(X_f)$ and $\clconv(X_+)$. Variables $x_2=0.6$, $x_3=0.3$, $y_2=0.5$, and $y_3=0.2$ are fixed.}
		\label{fig:example}
\end{figure}

\ignore{In addition to Theorem~\ref{theo:hullGeneral} and \ref{theo:hullPosiitve} above, we also provide in this paper a description of the convex hulls in the original space of variables via piecewise-valid inequalities, and give a polynomial time algorithm for solving the separation problem for inequalities \eqref{eq:extendedGenIntroPlus}, \eqref{eq:extendedGenIntroMinus} and \eqref{eq:extendedPosIntroPlus}. }

\subsection*{Outline} The rest of the paper is organized as follows. In \S\ref{sec:preliminaries} we review the valid inequalities for supermodular set functions and present the general form of the lifted supermodular inequalities. In \S\ref{sec:previous} we re-derive known ideal formulations in the literature for quadratic optimization using the lifted supermodular inequalities. In \S\ref{sec:strength} we show that the lifted supermodular inequalities are sufficient to  describe the convex hull of $X$. In \S\ref{sec:derivation} we provide the explicit form of the lifted supermodular inequalities for $X$, both in the original space of variables and in conic quadratic representable form in an extended space, and discuss the separation problem. In \S\ref{sec:computations} we present computational results, and in \S\ref{sec:conclusions} we conclude the paper.

\subsection*{Notation} For a set $S\subseteq N$, define $x_S$ as the indicator vector of $S$, and define $S_x$ as the support set of a vector $x\in \{0,1\}^N$. By abusing notation, we use $x_S$ and $S_x$ interchangeably, e.g., given a set function $g:2^N\to \R$, we may equivalently write $g(S)$ or $g(x_S)$. To simplify the notation, given $i\in N$ and $S\subseteq N$, we write $S\cup i$ instead of $S\cup\{i\}$ and $S\setminus i$ instead of $S\setminus\{i\}$. For a set $Y\subseteq \R^N$, let $\conv(Y)$ denote the convex hull of $Y$ and \clconv(Y) denote its closure. We adopt the convention that $a/0=\infty$ if $a>0$ and $a/0=0$ if $a=0$.
For a $a\in \R$, let $a_+=\max\{a,0\}$. For a vector $c\in \R^N$ and a set $S\subseteq N$, we let $c(S)=\sum_{i\in S}c_i$ and $\max_c(S)=\max_{i\in S}c_i$ (by convention, $\max_c(\emptyset)=0$).

\section{Preliminaries}\label{sec:preliminaries}
In this section we cover a few preliminary results for the paper and, at the end, give the general form of the lifted supermodular inequalities (Theorem~\ref{theo:liftedSupermodular}).
\subsection{Supermodularity and valid inequalities}
A set function $g:2^N\to \R$ is \emph{supermodular} if
\begin{align*}
\rho(i,S)\leq \rho (i,T) \quad\forall i\in N\text{ and } \forall S\subseteq T\subseteq N\setminus i,
\end{align*}
where $\rho(i,S)=g(S\cup i)-g(S)$ is the increment function. 

\begin{proposition}[\citet{nemhauser1978analysis}]\label{prop:supermodularity}
If $g$ is a supermodular function, then
\begin{enumerate}
	\item $g(T)\geq g(S)+\sum\limits_{i\in T\setminus S}\rho(i,S)-\sum\limits_{i\in S\setminus T}\rho(i,N\setminus i )$ for all $S,T\subseteq N$
	\item $g(T)\geq g(S)+\sum\limits_{i\in T\setminus S}\rho(i,\emptyset)-\sum\limits_{i\in S\setminus T}\rho(i,S\setminus i )$ for all $S,T\subseteq N$.
\end{enumerate}
\end{proposition}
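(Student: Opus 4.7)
My plan is to prove both inequalities by building a telescoping chain from $S$ to $T$ and then bounding each single-element increment using the defining monotonicity property of $\rho(i, \cdot)$ (which is exactly what supermodularity asserts: $\rho(i, A) \le \rho(i, B)$ whenever $A \subseteq B \subseteq N \setminus i$). The two inequalities differ only in which chain one chooses to walk from $S$ to $T$.

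For inequality (1), I would route through $S \cup T$. Enumerate $T \setminus S = \{j_1, \ldots, j_p\}$ and $S \setminus T = \{k_1, \ldots, k_q\}$, then set $A_0 = S$, $A_\ell = S \cup \{j_1, \ldots, j_\ell\}$, and $B_0 = S \cup T$, $B_m = (S \cup T) \setminus \{k_1, \ldots, k_m\}$, so $A_p = B_0$ and $B_q = T$. Telescoping,
\[
g(T) - g(S) \;=\; \sum_{\ell=1}^p \rho(j_\ell, A_{\ell-1}) \;-\; \sum_{m=1}^q \rho(k_m, B_m).
\]
Since $A_{\ell-1} \supseteq S$ and $j_\ell \notin A_{\ell-1}$, supermodularity gives $\rho(j_\ell, A_{\ell-1}) \ge \rho(j_\ell, S)$. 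Since $B_m \subseteq N \setminus k_m$, supermodularity gives $\rho(k_m, B_m) \le \rho(k_m, N \setminus k_m)$. Substituting these bounds yields inequality (1).

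For inequality (2), I would instead route through $S \cap T$. With $C_0 = S$, $C_m = S \setminus \{k_1, \ldots, k_m\}$, and $D_0 = S \cap T$, $D_\ell = (S \cap T) \cup \{j_1, \ldots, j_\ell\}$, one has $C_q = D_0$ and $D_p = T$, and telescoping yields
\[
g(T) - g(S) \;=\; -\sum_{m=1}^q \rho(k_m, C_m) \;+\; \sum_{\ell=1}^p \rho(j_\ell, D_{\ell-1}).
\]
Now $C_m \subseteq S \setminus k_m$ gives $\rho(k_m, C_m) \le \rho(k_m, S \setminus k_m)$, while $D_{\ell-1} \supseteq \emptyset$ gives $\rho(j_\ell, D_{\ell-1}) \ge \rho(j_\ell, \emptyset)$; substitution yields inequality (2).

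The proof is essentially bookkeeping once one picks the right intermediate set, so there is no serious obstacle. The only point requiring care is matching each bound to its correct direction: when an element is being added, we need a \emph{lower} bound on $\rho$ and hence want the reference set to be as small as possible; when an element is being removed, the term enters with a minus sign, so we need an \emph{upper} bound on $\rho$ and want the reference set to be as large as possible. The choice of chain (through $S \cup T$ versus $S \cap T$) controls exactly which extremal reference sets — $S$ and $N\setminus i$ in the first case, $\emptyset$ and $S \setminus i$ in the second — are attainable.
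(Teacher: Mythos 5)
Your proof is correct. The paper itself gives no proof of this proposition --- it is quoted directly from \citet{nemhauser1978analysis} --- and your telescoping-chain argument (walking from $S$ to $T$ through $S\cup T$ for the first inequality and through $S\cap T$ for the second, then bounding each single-element increment via the monotonicity of $\rho(i,\cdot)$) is exactly the classical proof of these inequalities, with the directions of all bounds matched correctly.
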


As a direct consequence of Proposition~\ref{prop:supermodularity}, one can construct valid inequalities for the epigraph of a supermodular function $g$, i.e., 
$$Z=\left\{(x,t)\in \{0,1\}^N\times \R: g(x)\leq t\right\}.$$
Specifically, for any $S\subseteq N$, the linear supermodular inequalities \cite{nemhauser1988}
\begin{subequations}\label{eq:supermodularDiscrete}
\begin{align}
g(S)+\sum\limits_{i\in N\setminus S}\rho(i,S)x_i-\sum\limits_{i\in S}\rho(i,N\setminus i )(1-x_i)&\leq t,\text{ and}\label{eq:supermodularDiscrete1}\\
g(S)+\sum\limits_{i\in N\setminus S}\rho(i,\emptyset)x_i-\sum\limits_{i\in S}\rho(i,S\setminus i )(1-x_i)&\leq t\label{eq:supermodularDiscrete2}
\end{align}
\end{subequations}
are valid for $Z$. 

\subsection{Lifted supermodular inequalities}\label{sec:lifting}
We now describe a family of lifted supermodular inequalities, using a lifting approach similar to the ones used in \cite{gomez2018submodularity,richard2010lifting}. 
Let $h:\{0,1\}^N\times \R^N\to \R\cup\{\infty\}$ be a function defined over a mixed 0-1 domain and consider its epigraph 
\[
H=\left\{(x,y,t)\in \{0,1\}^N\times \R^N\times \R: h(x,y)\leq t\right\}.
\]
Observe that $H$ allows for arbitrary constraints, which can be encoded via function $h$. For example, 
nonnegativity and complementary constraints can be included by letting $h(x,y)=\infty$ whenever $y_i<0$ or $y_i(1-x_i)\neq 0$ for some $i\in N$. 

For $\alpha\in \R^N$, define the set function $g_\alpha:\{0,1\}^N\to \R\cup \{\infty,-\infty\}$ as
\begin{equation}\label{eq:defG}
g_\alpha(x)=\min_{y\in \R^N}-\alpha'y+h(x,y), 
\end{equation} 
and let $B\subseteq \R^N$ be the set of values of $\alpha$ for which problem \eqref{eq:defG} is bounded, i.e., 
\begin{align*}
B=\left\{\alpha\in \R^N: |g_\alpha(x)|<\infty, \ \forall x\in \{0,1\}^N \right\}.
\end{align*}
Although supermodularity is defined for set functions only, we propose in Definition~\ref{def:supermodular} below an extension for functions involving continuous variables as well.
\begin{definition}\label{def:supermodular}
Function $h$ is supermodular if the set function $g_\alpha$ defined in \eqref{eq:defG} is supermodular for all $\alpha\in B$. 
\end{definition}
\begin{remark}
	Suppose that $h$ does not depend on the continuous variables $y$, i.e., $h(x,y)=g(x)$. In this case problem \eqref{eq:defG} is unbounded unless $\alpha=0$, i.e., $B=\{0\}$, and we find that $h(x,y)$ is supermodular if and only if $g_0(x)=g(x)$ is supermodular. Thus, Definition~\ref{def:supermodular} includes the usual definition of supermodularity for set functions as a special case. \qed
\end{remark}

\begin{proposition}\label{prop:supermodularValid}
If function $h$ is supermodular, then for any $\alpha\in B$ and $S\subseteq N$, the inequalities
\begin{subequations} \label{eq:supermodularLinear}
\begin{align}
\alpha'y+g_\alpha(S)+\sum\limits_{i\in N\setminus S}\rho_\alpha(i,S)x_i-\sum\limits_{i\in S}\rho_\alpha(i,N\setminus i )(1-x_i)&\leq t,\text{ and}\label{eq:supermodularLinear1}\\
\alpha'y+g_\alpha(S)+\sum\limits_{i\in N\setminus S}\rho_\alpha(i,\emptyset)x_i-\sum\limits_{i\in S}\rho_\alpha(i,S\setminus i )(1-x_i)&\leq t\label{eq:supermodularLinear2}
\end{align}
\end{subequations}
are valid for $H$, where $\rho_\alpha(i,S)=g_\alpha(S\cup i)-g_\alpha(S)$.
\end{proposition}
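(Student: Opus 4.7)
The plan is to chain two inequalities: (i) a variational lower bound that relates $h(x,y)$ to the set function $g_\alpha$, and (ii) the linear supermodular inequalities \eqref{eq:supermodularDiscrete} applied to $g_\alpha$, which are available because $h$ is supermodular in the sense of Definition~\ref{def:supermodular}.

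First I would take any $(x,y,t)\in H$, so $h(x,y)\leq t$ and $x\in\{0,1\}^N$. Let $S_x\subseteq N$ denote the support of $x$. From the defining minimization \eqref{eq:defG}, for every $\alpha\in B$,
\begin{equation*}
g_\alpha(x)\;\leq\; -\alpha'y+h(x,y),
\end{equation*}
which, upon rearrangement, yields the key ``lifting'' bound $\alpha'y+g_\alpha(x)\leq h(x,y)\leq t$. This is the step that passes from the mixed-integer epigraph $H$ back to the pure-integer epigraph of $g_\alpha$.

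Next, since $\alpha\in B$, Definition~\ref{def:supermodular} guarantees that $g_\alpha$ is a (finite-valued) supermodular set function. Applying Proposition~\ref{prop:supermodularity}(1) to $g_\alpha$ with $T=S_x$ gives
\begin{equation*}
g_\alpha(x)\;\geq\; g_\alpha(S)+\sum_{i\in S_x\setminus S}\rho_\alpha(i,S)-\sum_{i\in S\setminus S_x}\rho_\alpha(i,N\setminus i).
\end{equation*}
Because $x$ is binary, the indicator identities $\sum_{i\in S_x\setminus S}\rho_\alpha(i,S)=\sum_{i\in N\setminus S}\rho_\alpha(i,S)x_i$ and $\sum_{i\in S\setminus S_x}\rho_\alpha(i,N\setminus i)=\sum_{i\in S}\rho_\alpha(i,N\setminus i)(1-x_i)$ convert this to the linear-in-$x$ form
\begin{equation*}
g_\alpha(x)\;\geq\; g_\alpha(S)+\sum_{i\in N\setminus S}\rho_\alpha(i,S)\,x_i-\sum_{i\in S}\rho_\alpha(i,N\setminus i)(1-x_i).
\end{equation*}
Adding $\alpha'y$ to both sides and combining with the lifting bound from the first step yields exactly \eqref{eq:supermodularLinear1}. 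Inequality \eqref{eq:supermodularLinear2} is obtained identically, using Proposition~\ref{prop:supermodularity}(2) in place of (1).

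There is no real obstacle here; the only points requiring care are (a) invoking finiteness of $g_\alpha$ on $\{0,1\}^N$ via the assumption $\alpha\in B$, which is what ensures the increments $\rho_\alpha(i,\cdot)$ are well defined, and (b) checking that the rewriting of the sums as indicator expressions is legitimate, which follows immediately from $x\in\{0,1\}^N$. Since the argument never uses any special structure of $h$ beyond supermodularity in the sense of Definition~\ref{def:supermodular}, the proof is essentially a two-line chain once these bookkeeping steps are in place.
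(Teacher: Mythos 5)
Your proof is correct and follows essentially the same argument as the paper: the chain $t \geq h(x,y) \geq \alpha'y + g_\alpha(x)$ from the definition of $g_\alpha$ as a minimum, followed by the linear supermodular inequality \eqref{eq:supermodularDiscrete1} (i.e., Proposition~\ref{prop:supermodularity}) applied to $g_\alpha$ at $T=S_x$. The only difference is that you spell out the indicator-identity rewriting explicitly, which the paper leaves implicit.
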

\begin{proof}
For any $\alpha\in B$, $S\subseteq N$, and $(x,y,t)\in H$, we find 
$$t-\alpha'y\geq h(x,y)-\alpha'y\geq g_\alpha(x)\geq g_\alpha(S)+\sum\limits_{i\in N\setminus S}\rho_\alpha(i,S)x_i-\sum\limits_{i\in S}\rho_\alpha(i,N\setminus i )(1-x_i),$$ 
where the first inequality follows directly from the definition of $H$, the second inequality follows by minimizing $h(y)-\alpha'y$ with respect to $y$, and the third inequality follows from the validity of \eqref{eq:supermodularDiscrete1}. Thus, by adding $\alpha'y$ on both sides, we find that inequality \eqref{eq:supermodularLinear1} is valid. The validity of \eqref{eq:supermodularLinear2} is proven identically. 
\end{proof}

Since inequalities \eqref{eq:supermodularLinear} are valid for any $\alpha\in B$, one can obtain stronger valid inequalities by optimally choosing vector $\alpha$. 
\begin{theorem}[Lifted supermodular inequalities]\label{theo:liftedSupermodular}
	If  $h$ is supermodular, then for any $S\subseteq N$, the lifted supermodular inequalities
	\begin{subequations}\label{eq:supermodular}
	\begin{align}
	\max_{\alpha\in B}\;g_\alpha(S)+\sum\limits_{i\in N\setminus S}\rho_\alpha(i,S)x_i-\sum\limits_{i\in S}\rho_\alpha(i,N\setminus\{i\})(1-x_i)+\alpha'y&\leq t,\text{ and}\label{eq:supermodular1}\\
	\max_{\alpha\in B}\;g_\alpha(S)+\sum\limits_{i\in N\setminus S}\rho_\alpha(i,\emptyset)x_i-\sum\limits_{i\in S}\rho_\alpha(i,S\setminus\{i\})(1-x_i)+\alpha'y&\leq t\label{eq:supermodular2}
	\end{align}
	\end{subequations}
	are valid for $H$. 
\end{theorem}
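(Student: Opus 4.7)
The result is essentially an immediate consequence of Proposition~\ref{prop:supermodularValid} together with a standard ``pointwise supremum of valid cuts is valid'' argument; I would organize the proof in three short steps.

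First, I would fix an arbitrary $(x,y,t)\in H$ and an arbitrary $\alpha\in B$. Proposition~\ref{prop:supermodularValid} asserts that, since $h$ is supermodular (in the sense of Definition~\ref{def:supermodular}), the linear inequalities \eqref{eq:supermodularLinear1} and \eqref{eq:supermodularLinear2} are valid at this $(x,y,t)$ for this choice of $\alpha$. This is the only place where supermodularity is used: it guarantees that the set function $g_\alpha$ defined in \eqref{eq:defG} is supermodular, which in turn validates the discrete cuts \eqref{eq:supermodularDiscrete}, which then lift to \eqref{eq:supermodularLinear} via the chain of inequalities already spelled out in the proof of Proposition~\ref{prop:supermodularValid}.

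Second, I would observe that the left-hand side of \eqref{eq:supermodular1} is by definition the pointwise supremum, over $\alpha\in B$, of the left-hand sides of \eqref{eq:supermodularLinear1}. Since each of these quantities is bounded above by $t$ at the point $(x,y,t)$, the supremum over $\alpha\in B$ is also bounded above by $t$; this is exactly inequality \eqref{eq:supermodular1}. The identical argument applied to \eqref{eq:supermodularLinear2} yields the validity of \eqref{eq:supermodular2}.

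The only thing that might appear delicate is whether the ``$\max$'' in \eqref{eq:supermodular} is actually attained, but this does not affect validity: the proof works with supremum throughout, and the statement of validity (inequality $\leq t$) is preserved under suprema of finite‐valued expressions. One could also note the degenerate case in which $B$ is empty, in which case the left-hand side is $-\infty$ by convention and the inequality is trivial; if $B$ is nonempty, $g_\alpha$ and $\rho_\alpha$ are finite for every $\alpha\in B$ by the definition of $B$, so each individual bound is a genuine finite linear inequality. No real obstacle arises — the whole content of the theorem is contained in Proposition~\ref{prop:supermodularValid}, and Theorem~\ref{theo:liftedSupermodular} simply records the (stronger) valid inequality obtained by optimizing the lifting multiplier $\alpha$.
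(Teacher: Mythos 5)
Your proposal is correct and matches the paper's (implicit) argument exactly: the paper states Theorem~\ref{theo:liftedSupermodular} without a separate proof, justifying it only by the remark that since inequalities \eqref{eq:supermodularLinear} are valid for every $\alpha\in B$, taking the pointwise supremum over $\alpha$ preserves validity. Your additional remarks on attainment of the maximum and the degenerate case $B=\emptyset$ are harmless elaborations of the same one-line argument.
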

Observe that while inequalities \eqref{eq:supermodularLinear} are linear, inequalities \eqref{eq:supermodular} are \textit{nonlinear} in $x$ and $y$. Moreover, each inequality \eqref{eq:supermodular} is convex since it is defined as a supremum of linear inequalities.
In addition, if the base supermodular inequalities \eqref{eq:supermodularDiscrete} are strong for the convex hull of epi $g_\alpha$, then the lifted supermodular inequalities \eqref{eq:supermodular} are strong for $H$ as well, as formalized next. Given $\alpha\in B$, define $$G_\alpha=\left\{(x,t)\in \{0,1\}^N\times \R: g_\alpha(x)\leq t\right\}.$$ Note that $\conv(G_\alpha)$ is a polyhedron. Theorem~\ref{theo:hullSupermodular} below is a direct consequence of Theorem 1 in \cite{richard2010lifting}.  

\begin{theorem}[\citep{richard2010lifting}] \label{theo:hullSupermodular}
	If inequalities \eqref{eq:supermodularDiscrete} and bound constraints $0\leq x\leq 1$ describe $\conv(G_\alpha)$ for all $\alpha\in B$, then the lifted supermodular inequalities \eqref{eq:supermodular} and bound constraints $0\leq x\leq 1$ describe $\clconv(H)$. 
\end{theorem}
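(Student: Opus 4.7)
The plan is to verify that the abstract lifting framework of \cite{richard2010lifting} applies to the present setting and then invoke their Theorem~1. The first step is to make the correspondence between $H$ and the family $\{G_\alpha\}_{\alpha\in B}$ explicit: for any $(x,y,t)\in H$ and any $\alpha\in B$, the definition of $g_\alpha$ together with $h(x,y)\leq t$ yields $g_\alpha(x)\leq t-\alpha'y$, so the affine map $(x,y,t)\mapsto(x,\,t-\alpha'y)$ sends $H$ into $G_\alpha$. Conversely, because the minimization in \eqref{eq:defG} is attained for each $\alpha\in B$ at some $y^*(x,\alpha)$, every $(x,s)\in G_\alpha$ lifts back to a point of $H$. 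This matches the template of Richard--Tawarmalani lifting.

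Validity of \eqref{eq:supermodular} has already been established in Proposition~\ref{prop:supermodularValid} and Theorem~\ref{theo:liftedSupermodular}, so only sufficiency remains. To prove it, I would argue that any linear functional separating a point $(\bar x,\bar y,\bar t)\notin\clconv(H)$ has a nonnegative coefficient on $t$ (since $t$ is unbounded above on $H$) and, after rescaling, may be written as $t\geq \alpha'y+f(x)$ with $f$ affine in $x$; separators with zero coefficient on $t$ reduce to inequalities in $(x,y)$ which, under the indicator/nonnegativity structure encoded in $h$, are dominated by bound constraints. The correspondence above shows that validity of $t\geq \alpha'y+f(x)$ for $H$ is equivalent to $\alpha\in B$ together with $f(x)\leq g_\alpha(x)$ for all $x\in\{0,1\}^N$, i.e., to $t\geq f(x)$ being a valid linear inequality for $G_\alpha$.

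By the hypothesis of the theorem, every such valid inequality for $G_\alpha$ is either a bound constraint or a nonnegative combination of \eqref{eq:supermodularDiscrete1}--\eqref{eq:supermodularDiscrete2}. Taking the supremum over $\alpha\in B$ of the resulting inequalities $\alpha'y+f(x)\leq t$ produces exactly \eqref{eq:supermodular1}--\eqref{eq:supermodular2}. Hence any separator of $(\bar x,\bar y,\bar t)$ from $\clconv(H)$ is dominated by either a bound constraint or a lifted supermodular inequality, which is the claimed description.

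The main obstacle is technical and concerns the set $B$ and the supremum in \eqref{eq:supermodular}: one has to argue that a separating hyperplane for a point outside $\clconv(H)$ can always be realized with $\alpha\in B$ (rather than on the boundary where $g_\alpha$ diverges or becomes undefined), and that taking the pointwise supremum over $\alpha\in B$ commutes with taking the closed convex hull, so that \eqref{eq:supermodular} is sufficient and not merely necessary. Both of these issues are handled by the general projection/lifting machinery of \cite{richard2010lifting}, which is why the theorem follows as a direct corollary once the correspondence between $H$ and $\{G_\alpha\}_{\alpha\in B}$ has been spelled out.
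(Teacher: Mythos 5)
Your proposal is correct and takes essentially the same route as the paper: the paper offers no independent argument for this theorem, stating it only as a direct consequence of Theorem~1 of \cite{richard2010lifting}, which is precisely the reduction you carry out. Your additional scaffolding (the affine correspondence $(x,y,t)\mapsto(x,\,t-\alpha'y)$ between $H$ and $G_\alpha$, and the separating-hyperplane sketch) is a reasonable unpacking of that citation, with the genuinely delicate points --- attainment of the minimum in \eqref{eq:defG}, separators with zero $t$-coefficient, and interchanging the supremum over $\alpha\in B$ with the closure --- correctly identified and, as in the paper, deferred to the cited machinery.
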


Although Definition~\ref{def:supermodular} may appear to be too restrictive to arise in practice, we show in \S\ref{sec:networks} that supermodular functions are in fact widespread in a class of well-studied problems in mixed-integer \emph{linear} optimization. In \S\ref{sec:previous} we show that several existing results for quadratic optimization with indicators can be obtained as lifted supermodular inequalities. 
Perhaps, more surprisingly, for the rank-one quadratic with indicators
$$h(x,y)=\begin{cases}
\big(y(N^+) - y(N^-) \big)^2 &\text{if }y\geq 0\text{ and }y_i(1-x_i)=0, \ \forall i\in N^+ \cup N^-\\
\infty &\text{otherwise,}
\end{cases}
$$
we show in \S\ref{sec:strength} that
conditions in Definition~\ref{def:supermodular} and Theorem~\ref{theo:hullSupermodular} are satisfied as well.

\subsection{Supermodular inequalities and fixed-charge networks}\label{sec:networks}

\ignore{Although we focused on quadratic functions in this paper, the supermodular inequalities \eqref{eq:supermodular1}-\eqref{eq:supermodular2} can be used to derive valid inequalities for any set of the form 
$$H=\left\{(x,y,t)\in \{0,1\}^N\times \R_+^N\times \R: h(x,y)\leq t  \right\},$$ 
provided that the function 
\begin{equation*}
	g_\alpha(x)=\min_{y\in \R_+^N}-\alpha'y+h(x,y)\end{equation*}
is supermodular for all $\alpha\in B$. In particular,}
Given  $b\in \R$, $u\in \R_+^N$, and a partition $N=N^+\cup N^-\cup A^+\cup A^-$,
define for all $x\in \{0,1\}^N$ the \emph{fixed-charge network set}
\begin{align*}FC(x)=\Big\{y\in \R_+^N: \ &  y(N^+)+y( A^+)-y(A^-)-y(N^-)\leq b,\; y_i\leq u_i, \; i\in N,\\
	& y_i(1-x_i)=0, \ i\in N^+,\; y_ix_i=0, \ i\in N^- \Big\} \cdot
\end{align*} 
\citet{wolsey1989submodularity} uses $FC(x)$ to describe network structures arising in flow problems with fixed charges on the arcs: $N^+$ denotes the incoming arcs into a given subgraph, $N^-$ denotes the outgoing arcs, and whereas $A^+\cup A^-$ denotes the internal arcs in the subgraph,
and $b$ represents the supply/demand of the subgraph. 
Finally, define $$h(x,y)=\begin{cases}0 & \text{if }y\in FC(x)\\
	\infty & \text{otherwise.}\end{cases}$$

\begin{proposition}[\citep{wolsey1989submodularity}]\label{prop:network}For any $\alpha\in \R^N$, the function $$v_\alpha(x)=\max_{y\in \R_+^N}\alpha'y-h(x,y)$$ is submodular.
\end{proposition}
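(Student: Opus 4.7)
My plan is to pass to the LP dual, where the dependence on $x$ becomes transparent. The complementary constraints $y_i(1-x_i)=0$ for $i\in N^+$ and $y_ix_i=0$ for $i\in N^-$ can be written, given $y\geq 0$ and $y_i\leq u_i$, as variable upper bounds $y_i\leq \bar u_i(x)$, where $\bar u_i(x)=u_ix_i$ on $N^+$, $\bar u_i(x)=u_i(1-x_i)$ on $N^-$, and $\bar u_i(x)=u_i$ on $A^+\cup A^-$. Two key observations: (i) $\bar u$ is modular in $x$, so $\bar u(x)+\bar u(x')=\bar u(x\wedge x')+\bar u(x\vee x')$; and (ii) taking the LP dual,
$$v_\alpha(x)=\min_{(\pi,\mu)\in\mathcal{D}}\ b\pi+\mu^\top\bar u(x),$$
where $\mathcal{D}=\{(\pi,\mu)\geq 0:\pi+\mu_i\geq\alpha_i\text{ on }N^+\cup A^+,\ -\pi+\mu_i\geq\alpha_i\text{ on }N^-\cup A^-\}$ is independent of $x$.

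To prove submodularity, I would verify the lattice inequality $v_\alpha(x)+v_\alpha(x')\geq v_\alpha(x\wedge x')+v_\alpha(x\vee x')$ for all $x,x'\in\{0,1\}^N$. Choose dual optima $(\pi^j,\mu^j)$ at the respective point ($j=1,2$) of the minimal-$\mu$ form $\mu^j_i=(\alpha_i-\pi^j)_+$ on $N^+\cup A^+$ and $\mu^j_i=(\alpha_i+\pi^j)_+$ on $N^-\cup A^-$. Assuming without loss of generality $\pi^1\leq\pi^2$, I would plug $(\pi^1,\mu^1)$ into the dual evaluated at $x\wedge x'$ and $(\pi^2,\mu^2)$ into the dual evaluated at $x\vee x'$. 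Summing the resulting weak-duality inequalities, subtracting $v_\alpha(x)+v_\alpha(x')$, and invoking the modularity of $\bar u$ collapses everything to
$$(\mu^1-\mu^2)^\top\Delta\leq 0,\qquad \Delta:=\bar u(x)-\bar u(x\wedge x').$$

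The rest is a sign check: from the explicit form of $\bar u$, $\Delta_i\geq 0$ on $N^+$, $\Delta_i\leq 0$ on $N^-$, and $\Delta_i=0$ on $A^+\cup A^-$; meanwhile, monotonicity of $(\alpha_i-\pi)_+$ (nonincreasing) and $(\alpha_i+\pi)_+$ (nondecreasing) combined with $\pi^1\leq\pi^2$ yields $\mu^1_i-\mu^2_i\leq 0$ on $N^+\cup A^+$ and $\mu^1_i-\mu^2_i\geq 0$ on $N^-\cup A^-$. In every coordinate $(\mu^1_i-\mu^2_i)\Delta_i\leq 0$, closing the argument. The main delicacy is choosing the correct pairing of dual solutions with $x\wedge x'$ versus $x\vee x'$: the roles of $(\pi^1,\mu^1)$ and $(\pi^2,\mu^2)$ must be swapped when $\pi^2<\pi^1$, and one must commit to the minimal-$\mu$ optimum, otherwise dual degeneracy breaks the monotonicity-in-$\pi$ step. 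Once the pairing is set correctly, the remainder is bookkeeping driven by the modularity of $\bar u$ and the piecewise-linear monotonicity of $(\alpha_i\mp\pi)_+$.
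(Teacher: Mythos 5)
The paper does not prove this proposition at all --- it is quoted directly from \citet{wolsey1989submodularity} --- so there is no in-paper argument to compare against. Your LP-duality route (dualize, observe that only the objective coefficients $\bar u(x)$ depend on $x$ and do so modularly, then pair the smaller-$\pi$ dual optimum with $x\wedge x'$ and the larger with $x\vee x'$) is essentially the classical argument and is a sound, self-contained way to establish the claim. The reduction to the minimal-$\mu$ dual optimum and the remark about dual degeneracy are both correct and necessary; the pairing concern is in fact harmless, since the sign pattern of $\Delta$ is the same whether the smaller $\pi$ occurs at $x$ or at $x'$.

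However, your write-up contains a pair of compensating sign errors. Summing the weak-duality bounds
$v_\alpha(x\wedge x')\le b\pi^1+(\mu^1)^\top\bar u(x\wedge x')$ and $v_\alpha(x\vee x')\le b\pi^2+(\mu^2)^\top\bar u(x\vee x')$, subtracting the strong-duality identities at $x$ and $x'$, and using $\bar u(x\vee x')-\bar u(x')=\bar u(x)-\bar u(x\wedge x')=\Delta$ gives
\begin{equation*}
v_\alpha(x\wedge x')+v_\alpha(x\vee x')-v_\alpha(x)-v_\alpha(x')\;\le\;(\mu^2-\mu^1)^\top\Delta,
\end{equation*}
so the condition you need is $(\mu^1-\mu^2)^\top\Delta\ge 0$, not $\le 0$. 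Correspondingly, since $(\alpha_i-\pi)_+$ is nonincreasing in $\pi$ and $\pi^1\le\pi^2$, you get $\mu^1_i-\mu^2_i\ge 0$ on $N^+\cup A^+$ (not $\le 0$), and $\mu^1_i-\mu^2_i\le 0$ on $N^-\cup A^-$ (not $\ge 0$). Combined with $\Delta_i\ge 0$ on $N^+$, $\Delta_i\le 0$ on $N^-$, and $\Delta_i=0$ on $A^+\cup A^-$, every coordinate product $(\mu^1_i-\mu^2_i)\Delta_i$ is $\ge 0$, which is exactly what the corrected target requires. The two flips cancel, so your conclusion stands, but as written both intermediate claims are false and should be reversed. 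A minor additional caveat: strong duality at $x$ and $x'$ presumes $FC(x)\ne\emptyset$; this is automatic when $b\ge 0$ (take $y=0$) but needs a feasibility hypothesis if $b<0$.
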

It follows that the function $g_\alpha(x)=-v_\alpha(x)=\min_{y\in \R_+^N}-\alpha'y+h(x,y)$ is supermodular, and inequalities \eqref{eq:supermodularLinear} and \eqref{eq:supermodular} are valid. Moreover, \citet{wolsey1989submodularity} shows that the linear supermodular inequalities \eqref{eq:supermodularLinear} with $\alpha\in \{-1,0,1\}^N$ include as special cases well-known inequalities for mixed-integer linear optimization such as flow-cover inequalities \cite{PVRW:fc,van1986valid} and inequalities for capacitated lot-sizing \cite{AM:ls-poly,pochet1988valid}; several other classes for fixed-charge network flow problems are special cases as well \cite{A:fp,ANS:avub,atamturk2017path}. Therefore, the inequalities presented in this paper can be interpreted as nonlinear generalizations of the aforementioned inequalities.

\section{Previous results as lifted supermodular inequalities}\label{sec:previous}

In order to illustrate the approach, in this section, we show how existing results for quadratic optimization with indicators can be derived using the lifted supermodular inequalities \eqref{eq:supermodular}.

\subsection{The single-variable case}\label{sec:perspective}

Consider, first, the single-variable case
$$X^1=\left\{(x,y,t)\in \{0,1\}\times \R_+\times \R: y^2\leq t,\; y(1-x)=0 \right\}$$  
for which $\clconv(X^1)$ is given by the perspective reformulation \cite{akturk2009strong,Ceria1999,Frangioni2006,Gunluk2010}:
$$\clconv(X^1)=\left\{(x,y,t)\in [0,1]\times \R_+\times \R\cup \infty: \frac{y^2}{x}\leq t \right\}.$$ 
We now derive the perspective reformulation as a special case, in fact, using a modular inequality. Note have that $g_\alpha(0)=0$ and $g_\alpha(1) =\min_{y\in \R_+}-\alpha y+y^2 = -\frac{\alpha_+^2}{4}$
since $y^*=\alpha/2$ if $\alpha\geq 0$ and $y^*=0$ otherwise. Thus,  $g_\alpha$ is a modular function for any $\alpha\in \R^N$, and inequalities \eqref{eq:supermodularDiscrete} reduce to 
$$t \ge -\frac{1}{4}\alpha_+^2x.$$ 
Then, we find that inequalities \eqref{eq:supermodular} reduce to the perspective of $y^2$:
\begin{align*}
t\geq \max_{\alpha\in \R^N}-\frac{1}{4}\alpha_+^2x+\alpha y=\frac{y^2}{x}\tag{with $\alpha^*=2y/x$} \cdot
\end{align*}

\subsection{The rank-one case with free continuous variables}\label{sec:free}

Consider the relaxation of $X$ obtained by dropping the non-negativity constraints $y \ge 0$:
$$X_f=\big\{(x,y,t)\in \{0,1\}^N\times \R^N\times \R: y(N)^2\leq t,\; y_i(1-x_i)=0, \ \forall i\in N\big\} \cdot$$
Observe that any rank-one quadratic constraint of the form 
$\left(\sum_{i\in N}c_iy_i\right)^2\leq t$
with $c_i\neq 0$ can be transformed into the form given in $X_f$ by scaling the continuous variables (so that $|c_i|=1$) and negating variables as $\bar y_i:=-y_i$ if $c_i<0$. 
The closure of the convex hull of $X_f$ is derived in \cite{atamturk2019rank}, and the effectiveness of the resulting inequalities is demonstrated on sparse regression problems. We now re-derive the description of $\clconv(X_f)$ using lifted supermodular inequalities. 

For $S \subseteq N$, we have  
\begin{align*}g_\alpha(x_S)&=\min_{y\in \R^S}-\alpha'y+y(S)^2.\end{align*}
It is easy to see that $g_\alpha(x_S)=-\infty$ unless $\alpha_i=\alpha_j$ for all $i\neq j$, see \cite{atamturk2019rank}. Therefore, letting $\bar \alpha=\alpha_i$ for all $i\in N$, we find that 
\begin{align*}g_{\bar \alpha}(x_S)&=\min_{y\in \R^S}-\bar\alpha y(S)+y(S)^2=\begin{cases}0 & \text{if }S=\emptyset \\
-\bar \alpha^2/4&\text{otherwise,}\end{cases}\end{align*}
where the optimal solution is found by setting $y(S)=\bar \alpha/2$. The function $g_\alpha$ is supermodular since $\rho_{\bar \alpha}(i,\emptyset)=-\bar\alpha^2/4$ and $\rho_{\bar \alpha}(i,S)=0$ for any $S\neq \emptyset$. 

Letting $S=\{1\}$, inequality \eqref{eq:supermodular1} reduces to 
\begin{align*}
	\max_{\bar \alpha\in \R}\;-\frac{\bar \alpha^2}{4}+\bar\alpha y(N)\leq t \ 
	\Leftrightarrow \ y(N)^2\leq t \tag{with $\bar \alpha = 2y(N)$}.
\end{align*}
Also letting $S=\{1\}$, inequality \eqref{eq:supermodular2} reduces to 
\begin{align*}
\max_{\bar \alpha\in \R}\;-\frac{\bar \alpha^2}{4}x(N)+\bar \alpha y(N)\leq t \ 
\Leftrightarrow \ \frac{y(N)^2}{x(N)}\leq t \tag{with $\bar \alpha = 2y(N)/x(N)$}.
\end{align*}
These two supermodular inequalities are indeed sufficient to describe $\conv(X_f)$ \citep{atamturk2019rank}.
As we shall see in \S\ref{sec:strength}, incorporating the non-negativity constraints $y \ge 0$, $\conv(X)$ is substantially more complex than $\conv(X_f)$. Nonetheless, as shown in Example~\ref{ex:freeVsPos}, the resulting convexification is substantially stronger as well.

\ignore{
\begin{proposition}[\citep{atamturk2019rank}]\label{prop:free}
	$$\clconv(X_f)=\left\{(x,y,t)\in [0,1]^N\times \R^N\times \R\cup\infty: y(N)^2\leq t,\; \frac{y(N)^2}{x(N)}\leq t\right\}.$$ 
\end{proposition}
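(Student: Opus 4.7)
The plan is to obtain this result as a direct consequence of Theorem~\ref{theo:hullSupermodular} applied to the current setting. Validity of both inequalities $y(N)^2 \leq t$ and $y(N)^2/x(N) \leq t$ has already been established above, as each was derived as a lifted supermodular inequality with seed set $S = \{1\}$. What remains is \emph{sufficiency}: to show that these two inequalities together with the bounds $0 \leq x \leq 1$ suffice to describe $\clconv(X_f)$.

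The key step is to verify the hypothesis of Theorem~\ref{theo:hullSupermodular}, namely that the discrete supermodular inequalities \eqref{eq:supermodularDiscrete} describe $\conv(G_{\bar\alpha})$ for every $\bar\alpha \in B$. Recall from the preceding derivation that $g_{\bar\alpha}$ is a two-level function: $g_{\bar\alpha}(\emptyset) = 0$ and $g_{\bar\alpha}(x_S) = -\bar\alpha^2/4$ for every nonempty $S \subseteq N$. Consequently $G_{\bar\alpha}$ decomposes into the ray $\{(0,t):t\geq 0\}$ and the family $\{(x_S,t):S\neq\emptyset,\;t\geq-\bar\alpha^2/4\}$, and a direct vertex enumeration shows that $\conv(G_{\bar\alpha})$ is cut out exactly by $t \geq -\bar\alpha^2/4$, $t \geq -(\bar\alpha^2/4)\,x(N)$, and $0 \leq x \leq 1$. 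One then checks that these two linear inequalities already appear among the family \eqref{eq:supermodularDiscrete} with seed set $S = \{1\}$: since $\rho_{\bar\alpha}(i,T) = 0$ whenever $T\neq\emptyset$ while $\rho_{\bar\alpha}(i,\emptyset) = -\bar\alpha^2/4$, the expressions in \eqref{eq:supermodularDiscrete1} and \eqref{eq:supermodularDiscrete2} collapse respectively to $t \geq -\bar\alpha^2/4$ and $t \geq -(\bar\alpha^2/4)\,x(N)$.

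With the hypothesis of Theorem~\ref{theo:hullSupermodular} in hand, the lifted supermodular inequalities \eqref{eq:supermodular} together with the box bounds describe $\clconv(X_f)$. The two specific lifted inequalities corresponding to $S = \{1\}$ were computed explicitly in the paragraphs above and are precisely $y(N)^2 \leq t$ and $y(N)^2/x(N) \leq t$; every other nontrivial choice of seed set produces one of these same two inequalities since $g_{\bar\alpha}$ has only two distinct values, so no additional inequality is needed. The main subtlety I anticipate is the boundary behavior: when $x(N) = 0$, the convention $a/0 = \infty$ for $a > 0$ forces $y(N) = 0$ through the second inequality, correctly enforcing the complementarity in $X_f$, while the closure operation in $\clconv$ absorbs the remaining limit points automatically.
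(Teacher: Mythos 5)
Your argument is correct, but it is worth noting that the paper itself does not actually prove this statement: \S\ref{sec:free} only re-derives the two inequalities as lifted supermodular cuts and then attributes sufficiency to \citet{atamturk2019rank}, where the convex hull is established by a direct convexification argument. What you do differently is close the loop inside the paper's own framework: you verify the hypothesis of Theorem~\ref{theo:hullSupermodular} by computing $\conv(G_{\bar\alpha})$ explicitly and then invoke the lifting theorem, which is exactly the template the paper applies to $X$ and $X_+$ in \S\ref{sec:strength}. This buys a self-contained proof at the cost of one computation you somewhat gloss over: ``direct vertex enumeration'' hides the fact that the convex envelope of the two-level function $g_{\bar\alpha}$ over $[0,1]^N$ is $-\tfrac{\bar\alpha^2}{4}\min\{1,x(N)\}$, i.e., that the minimum weight placeable on the empty set in a convex decomposition of $x$ is $(1-x(N))_+$ (achieved by singletons when $x(N)\le 1$, and equal to $0$ when $x(N)\ge 1$ since $\{x\in[0,1]^N: x(N)\ge 1\}$ is the convex hull of the nonzero vertices); this is elementary but is the one nontrivial step and should be spelled out. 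Two further minor points: Theorem~\ref{theo:hullSupermodular} concludes that the \emph{entire} family \eqref{eq:supermodular} plus bounds describes $\clconv(X_f)$, so reducing to the two stated inequalities requires checking that the remaining lifted cuts are implied; your claim that every other seed set ``produces one of these same two inequalities'' is not literally true --- for $|S|\ge 2$, inequality \eqref{eq:supermodular2} yields $y(N)^2/\bigl(1+x(N\setminus S)\bigr)\le t$ --- but these are dominated by $y(N)^2\le t$, so the conclusion stands. Everything else, including the identification of the two cuts within the family \eqref{eq:supermodularDiscrete} for the seed set $S=\{1\}$, checks out.
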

}

\subsection{The rank-one case with a negative off-diagonal}\label{sec:neg}

Consider the special case of $X$ with two continuous variables ($N=\{1,2\}$) with a negative off-diagonal:
$$X_-^2=\left\{(x,y,t)\in \{0,1\}^2\times \R_+^2\times \R: (y_1-y_2)^2\leq t,\; y_i(1-x_i)=0, \ i=1,2\right\}.$$ 
Observe that any quadratic constraint of the form $\left(c_1y_1-c_2y_2\right)^2\leq t$ with $c_1, c_2>0$ can be written
as in $X_-^2$ by scaling the continuous variables. 

For $\alpha \in \R^2$, observe that if $\alpha_1 + \alpha_2 > 0$, 
$$g_\alpha(x)=\min_{y \in \R^2_+} -\alpha_1y_1-\alpha_2y_2+(y_1-y_2)^2$$
is unbounded. Otherwise,
\begin{align*}
g_\alpha(\emptyset)&=0,\\
g_\alpha(\{1\})&=-\frac{\alpha_1^2}{4}\text{ if }\alpha_1\geq 0 \text{ and }g_\alpha(\{1\})=0\text{ otherwise},\\
g_\alpha(\{2\})&=-\frac{\alpha_2^2}{4}\text{ if }\alpha_2\geq 0 \text{ and }g_\alpha(\{2\})=0\text{ otherwise},\\
g_\alpha(\{1,2\})&=\begin{cases}-\frac{\alpha_1^2}{4}&\text{if }\alpha_1\geq 0 \\
-\frac{\alpha_2^2}{4}&\text{if }\alpha_2\geq 0 \\
0 & \text{if }\alpha_1\leq 0 \text{ and }\alpha_2\leq 0.\end{cases}
\end{align*}
In particular, $g_\alpha$ is supermodular (and in fact modular) for any fixed $\alpha$ such that $\alpha_1+\alpha_2\geq 0$: for any $i=1,2$ and $S\subseteq N\setminus i$, $\rho_\alpha(i,S)=-\frac{\max\{0,\alpha_i\}^2}{4}$.  
Letting $S=\emptyset$, inequality \eqref{eq:supermodular1} reduces to 
\begin{align}\label{eq:maxCases}
\max_{\alpha_1+\alpha_2\leq 0}\;-\frac{\max\{0,\alpha_1\}^2}{4}x_1-\frac{\max\{0,\alpha_2\}^2}{4}x_2+\alpha_1y_1+\alpha_2y_2&\leq t.
\end{align}
An optimal solution of \eqref{eq:maxCases} can be found as follows. If $y_1\geq y_2$, then set $\alpha_1>0$ and $\alpha_2=-\alpha_1 < 0$. Moreover, in this case, the optimal value is given by $$\max -\frac{\alpha_1^2}{4}x_1+\alpha_1(y_1-y_2)=\frac{(y_1-y_2)^2}{x_1}.$$
The case $y_2\geq y_1$ is identical. The resulting piecewise valid inequality 
	\begin{equation}t\geq \begin{cases}\frac{(y_1-y_2)^2}{x_1} & \text{if }y_1\geq y_2\\
		\frac{(y_1-y_2)^2}{x_2} & \text{if }y_2\geq y_1\end{cases}
	\end{equation}
along with the bound constraints $0\leq x\leq 1$, $0\leq y$  describe $\clconv(X_-^2)$ \citep{atamturk2018strong}.
We point that a conic quadratic representation for $\clconv(X_-^2)$ and generalizations to (not necessarily rank-one) quadratic functions with negative off-diagonals are given in \cite{atamturk2018signal}.

\ignore{
with two regions corresponding to the relative values of $y_1$ and $y_2$, is sufficient to describe $\clconv(X_-^2)$.

\begin{proposition}[\citep{atamturk2018strong}] Bound constraints $0\leq x\leq 1$, $0\leq y$, and inequality 
	\begin{equation}t\geq \begin{cases}\frac{(y_1-y_2)^2}{x_1} & \text{if }y_1\geq y_2\\
	\frac{(y_1-y_2)^2}{x_2} & \text{if }y_2\geq y_1\end{cases}\end{equation}
	describe $\clconv(X_-^2)$.
\end{proposition}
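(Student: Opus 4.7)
The plan is to deduce the convex-hull description from Theorem~\ref{theo:hullSupermodular}, specialised to the two-variable rank-one function with negative off-diagonal. Validity of the piecewise inequality has already been established in the excerpt: it is precisely the closed form of the lifted supermodular inequality \eqref{eq:supermodular1} with $S=\emptyset$, obtained by solving the univariate optimisation over $\alpha$ under the sign constraint $\alpha_1+\alpha_2\leq 0$. So the real work is showing that this inequality, together with the nonnegativity and upper-bound constraints, is \emph{sufficient} to describe $\clconv(X_-^2)$.

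First I would verify the hypothesis of Theorem~\ref{theo:hullSupermodular}. For every $\alpha\in B=\{\alpha\in\R^2:\alpha_1+\alpha_2\leq 0\}$, the set function $g_\alpha$ computed in the excerpt is in fact \emph{modular}: $\rho_\alpha(i,S)=-\max\{0,\alpha_i\}^2/4$ for every $S\subseteq N\setminus i$. Consequently $G_\alpha$ is the mixed-integer epigraph of an affine function on $\{0,1\}^2$, and $\conv(G_\alpha)$ is trivially described by the single linear inequality $t\geq g_\alpha(\emptyset)+\sum_i \rho_\alpha(i,\emptyset)x_i$ together with $0\leq x\leq 1$. Both variants of \eqref{eq:supermodularDiscrete} reduce to this inequality in the modular case, so the hypothesis of Theorem~\ref{theo:hullSupermodular} is met.

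Next I would apply Theorem~\ref{theo:hullSupermodular} to conclude that the family of lifted supermodular inequalities \eqref{eq:supermodular1} with $S=\emptyset$, together with $0\leq x\leq 1$ and $y\geq 0$, describes $\clconv(X_-^2)$. It remains to show that this entire family collapses to the single piecewise inequality stated in the proposition. This is the computation carried out in \eqref{eq:maxCases}: I would split on $\mathrm{sign}(y_1-y_2)$, observe that by the constraint $\alpha_1+\alpha_2\leq 0$ only one of $\alpha_1,\alpha_2$ can be positive in an optimal solution, and set the positive one proportional to $|y_1-y_2|$ while taking the other as its negative. A short calculus check gives the optimizer $\alpha_1^*=2(y_1-y_2)/x_1$, $\alpha_2^*=-\alpha_1^*$ when $y_1\geq y_2$ (and symmetrically when $y_2\geq y_1$), yielding the closed-form values $(y_1-y_2)^2/x_1$ and $(y_1-y_2)^2/x_2$. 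All other choices of $S$ produce dominated inequalities, which one verifies by direct substitution or by noting that $S=\emptyset$ already gives the tightest bound since $g_\alpha$ is modular.

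The main obstacle I anticipate is the careful treatment of the boundary, i.e., the limits $x_1\to 0$ with $y_1>0$ or $x_2\to 0$ with $y_2>0$: the piecewise ratios must be interpreted using the convention $a/0=\infty$ for $a>0$, and one must argue that the points excluded in this way are not in $\clconv(X_-^2)$ by extracting feasible sequences from any candidate limit point. Beyond this, the only subtlety is justifying that the two pieces combine into a well-defined convex function, which follows from the fact that they agree on the diagonal $y_1=y_2$ (both vanish) and each is convex on its half-space, so their maximum is a single convex inequality equivalent to the piecewise form.
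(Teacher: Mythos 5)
Your argument is correct, and it is essentially the paper's own machinery: the paper derives the piecewise inequality by exactly the computation in \eqref{eq:maxCases} and then simply \emph{cites} \citet{atamturk2018strong} for the convex-hull claim, whereas you supply the sufficiency directly from Theorem~\ref{theo:hullSupermodular} --- which is precisely the route the paper itself takes for the general set $X$ in \S\ref{sec:strength}. Your key observations are all sound: for every $\alpha$ with $\alpha_1+\alpha_2\le 0$ the function $g_\alpha$ is modular, so all inequalities \eqref{eq:supermodularDiscrete} (for every $S$ and both variants) coincide with the single linear inequality $t\ge\sum_i\rho_\alpha(i,\emptyset)x_i$, which together with $0\le x\le 1$ trivially describes $\conv(G_\alpha)$; hence the hypothesis of Theorem~\ref{theo:hullSupermodular} holds, the lifted inequalities collapse to a single one, and the maximization over $\alpha\in B$ yields the two closed-form pieces. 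The only thing your write-up buys beyond the paper is a self-contained proof of sufficiency for the $2\times 2$ case that does not lean on the external reference; conversely, the paper's general treatment (Proposition~\ref{prop:hullDiscrete} plus Theorem~\ref{theo:hullSupermodular}) subsumes this case with $N^+=\{1\}$, $N^-=\{2\}$. Your caveats about the $a/0$ convention and the need for the closure (e.g.\ the point $(x,y,t)=(0,0,1,1,0)$ lies in $\clconv(X_-^2)\setminus\conv(X_-^2)$ and satisfies the inequality under the convention $0/0=0$) are exactly the right boundary issues to flag.
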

}

\subsection{Outlier detection with temporal data}\label{sec:outlier}

In the context of outlier detection with temporal data, \citet{gomez2019outlier} studies the set 
\begin{align*}X_T=\Big\{(x,y,t)\in \{0,1\}^2\times \R^4\times \R:& \frac{a_1}{2}(y_3-y_1)^2+(y_3-y_4)^2+\frac{a_2}{2}(y_4-y_2)^2\leq t,\\
& y_1(1-x_1)=0,\; y_2(1-x_2)=0\Big\}
\end{align*}
where $a_1,a_2>0$ are constants. While we refer the reader to \cite{gomez2019outlier} for details on the derivation of $\clconv(X_T)$, we point out that it can in fact be described by lifted supermodular inequalities. Indeed, in this case, function $g_\alpha$ is given by 
$$g_\alpha(x)=K_1(\alpha)-K_2(\alpha)\max\{x_1,x_2\},$$
where $K_1(\alpha)$ and $K_2(\alpha)$ are constants that do not depend on $x$ and $K_2(\alpha)\geq 0$. Since $\max\{x_1,x_2\}$ is a submodular function, it follows that $g_\alpha$ is supermodular.

\section{Convex hull via lifted supermodular inequalities}\label{sec:strength}

\ignore{

In general, inequalities \eqref{eq:supermodularDiscrete} are not particularly strong and may provide a poor approximation of $\conv(Z)$. As a consequence, the lifted supermodular inequalities \eqref{eq:supermodular} are not guaranteed to be strong for arbitrary sets $H$. Nonetheless, as shown in Section~\ref{sec:previous}, the lifted supermodular inequalities do result in ideal formulations when applied to well-understood rank-one quadratic functions, and thus they may also be effective for sets $X_+$ and $X$. This section is devoted to proving that this is indeed the case, i.e., proving Theorem~\ref{theo:hullRankOne} below.

\todo{We can skip this theorem as we give explicit theorems later. Then there is no need to mention that (8b) are weak. }
\begin{theorem}\label{theo:hullRankOne}
Inequalities \eqref{eq:supermodular1} and bound constraints describe $\conv(X)$ and $\conv(X_+)$. 
\end{theorem}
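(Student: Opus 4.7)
The plan is to invoke Theorem~\ref{theo:hullSupermodular} in a one-sided form: I aim to show that (i) the function $h$ defining $X$ (and $X_+$) is supermodular in the sense of Definition~\ref{def:supermodular}, and (ii) for every $\alpha \in B$, the single family \eqref{eq:supermodularDiscrete1} together with $0 \le x \le 1$ describes $\conv(G_\alpha)$. The passage from these two facts to the nonlinear description in terms of \eqref{eq:supermodular1} alone proceeds exactly as in Theorem~\ref{theo:hullSupermodular}, since the Richard--Tawarmalani lifting operates facet by facet and never mixes the two families.

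For (i), I would compute $g_\alpha$ in closed form. For $X_+$, minimizing $-\alpha'y + y(S)^2$ over $y \ge 0$ with support in $S$ is achieved by concentrating the sum $u = y(S)$ on the index attaining $\max_{i \in S} \alpha_i$ and then optimizing a univariate quadratic in $u$, giving $g_\alpha(S) = -\bigl((\max_{i \in S}\alpha_i)_+\bigr)^2/4$ and $B = \R^N$. For $X$, an analogous analysis on the signed combination yields $B = \{\alpha : \max_{i \in N^+} \alpha_i \le 0 \text{ or } \max_{i \in N^-} \alpha_i \le 0\}$, and on $B$ the function $g_\alpha$ collapses, by sign symmetry, to the $X_+$ formula restricted to whichever side carries the positive coefficients. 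Supermodularity is then immediate: the increment $\rho_\alpha(i,S)$ is zero whenever $\alpha_i$ is already dominated by some $\max_{j \in S}\alpha_j$ and is monotonically closer to $0$ as $S$ grows otherwise, so $\rho_\alpha(i,S) \le \rho_\alpha(i,T)$ for $S \subseteq T$.

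For (ii), I would enumerate the extreme points of $\conv(G_\alpha)$ after sorting the indices with positive $\alpha$-coefficients so that $\alpha_{i_1} \ge \alpha_{i_2} \ge \cdots > 0$. The function $g_\alpha$ takes only finitely many distinct values arranged along a chain, and a direct polyhedral argument shows that every nontrivial facet of $\conv(G_\alpha)$ coincides with \eqref{eq:supermodularDiscrete1} for some prefix $S = \{i_1,\dots,i_k\}$, since for these prefix sets \eqref{eq:supermodularDiscrete1} exactly tracks the jumps of $g_\alpha$ and is tight at every vertex in the corresponding chain. The main obstacle is precisely this polyhedral reduction: the general supermodular theory requires both families \eqref{eq:supermodularDiscrete}, and eliminating \eqref{eq:supermodularDiscrete2} here must rely on the ``max-of-positive-$\alpha$'' structure inherited from the rank-one quadratic, which makes $g_\alpha$ determined by a single dominating index. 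A secondary technicality for $X$ is that $B \subsetneq \R^N$, so one must verify that the supremum in \eqref{eq:supermodular1} taken over $\alpha \in B$ still recovers the full nonlinear convex hull; I would handle this with a closure argument at the boundary of $B$, using that the limiting hyperplanes still yield valid supporting inequalities for $\clconv(X)$.
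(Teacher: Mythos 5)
Your overall architecture is exactly the paper's: compute $g_\alpha$ in closed form, observe supermodularity, show that the single family \eqref{eq:supermodularDiscrete1} with bounds describes $\conv(G_\alpha)$, and invoke Theorem~\ref{theo:hullSupermodular}. However, there are two concrete gaps. First, your set $B$ for $X$ is wrong: boundedness of \eqref{eq:projectionGen} is \emph{not} equivalent to ``$\max_{i\in N^+}\alpha_i\le 0$ or $\max_{j\in N^-}\alpha_j\le 0$''; it requires $\alpha_i+\alpha_j\le 0$ for every $i\in N^+,\,j\in N^-$. For $N^+=\{1\}$, $N^-=\{2\}$, $\alpha=(5,-1)$ your condition holds, yet $-5y_1+y_2+(y_1-y_2)^2\to-\infty$ along $y_1=y_2=s$. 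Since Definition~\ref{def:supermodular} and Theorem~\ref{theo:hullSupermodular} quantify over the exact set $B$ on which $g_\alpha$ is finite, this must be corrected before the lifting theorem can be applied.

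Second, and more seriously, step (ii) is where all the work lies, and both the family you name and the justification you give for it are deficient. After sorting, the sets $S$ that produce the $n$ needed inequalities \eqref{eq:supermodularSorted} are the \emph{lower} level sets $\{i:\alpha_i\le\alpha_\ell\}$ (prefixes in \emph{ascending} order of $\alpha$), not your top-$k$ sets $\{i_1,\dots,i_k\}$ with $\alpha_{i_1}\ge\cdots\ge\alpha_{i_k}$. For any $S$ containing the global argmax one has $\rho_\alpha(i,S)=0$ for all $i\notin S$, so \eqref{eq:supermodularDiscrete1} collapses to a single inequality involving only $x_{(1)}$; already for $n=3$ and $\alpha=(1,2,3)$ your descending-prefix family yields only two distinct inequalities and misses $t\ge-\tfrac14-\tfrac34x_2-2x_3$, which cuts off, e.g., $x=(1,\tfrac12,0)$, $t=-\tfrac34$ while the true minimum over $\conv(G_\alpha)$ is $-\tfrac58$. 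Moreover, even with the correct sets, ``tight at every vertex in the corresponding chain'' only establishes that these inequalities are facet-defining; it does not establish \emph{sufficiency}, i.e., that no other facets exist. The paper proves sufficiency in Proposition~\ref{prop:hullDiscrete} by writing membership in $\conv(G_\alpha)$ as a linear program, exhibiting for each fractional $x$ a dual solution whose objective is one of the inequalities \eqref{eq:supermodularSorted}, and certifying its optimality with an explicit greedy primal construction (Algorithm~\ref{alg:primal}) satisfying complementary slackness. Some argument of this kind is the technical heart of the theorem and is absent from your proposal. (A minor point: $\conv(X)$ is not closed, so the statement should be about $\clconv$; your closing remark about a closure argument at the boundary of $B$ is consistent with how the paper handles this.)
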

Inequalities \eqref{eq:supermodular2} are weak in this case and are not needed to describe  $\conv(X)$.
}


We now turn our attention to the rank-one sets $X$ and $X_+$. This section is devoted to showing that the lifted supermodular inequalities \eqref{eq:supermodular} are sufficient to describe $\clconv(X)$ and $\clconv(X_+)$. By Theorem~\ref{theo:hullSupermodular}, it suffices to derive an explicit form of the projection function $g_\alpha$ and show that inequalities \eqref{eq:supermodularDiscrete} describe the convex hull of its epigraph $G_\alpha$. 
The rest of this section is organized as follows.  In \S\ref{sec:gAlpha} we  derive the set function $g_\alpha$ defined in \eqref{eq:defG} for the rank-one quadratic function and then show that it is supermodular. In \S\ref{sec:facetMin} we describe the convex hull of $G_\alpha$ using only a small subset of the supermodular inequalities \eqref{eq:supermodularDiscrete}.  

\ignore{
\todo{I think this write-up is too detailed for intro to the section. No need to define the notation yet.}
 In \S\ref{sec:gAlpha} we show that the function $g_\alpha$ given in \eqref{eq:defG}, when bounded, reduces to the supermodular function
\begin{equation}\label{eq:gAlpha}
g_\alpha(x_S)=-\frac{\max_\alpha(T)^2}{4},
\end{equation}
where $T=S$ for $X_+$, and either $T=S\cap N^+$ or $T=S\cap N^-$ for $X$. We point out, however, that the set $B$ where $|g_\alpha(x)|<\infty$ is different for $X_+$ and $X$.  

In \S\ref{sec:facetMin} we study the convex hull of the epigraph of function $g_\alpha$, i.e., 
\begin{equation}\label{eq:defW}W=\left\{(x,t)\in \{0,1\}^N\times \R: -\frac{\max_{i\in N}\{\alpha_i^2x_i\}}{4}\leq t\right\},\end{equation}
and show that the supermodular inequalities \eqref{eq:supermodularDiscrete1} describe $\conv(W)$. In \S\ref{sec:hullXg} we conclude that inequalities \eqref{eq:supermodular1} indeed describe $\conv(X)$ and $\conv(X_+)$.
}

\subsection{The set function $g_\alpha$}\label{sec:gAlpha}

We present the derivation of  set function $g_\alpha$ for $X_+$ and $X$ separately, and then verify that $g_\alpha$ is indeed supermodular.

\subsubsection{Derivation for $X_+$}\label{sec:gPositive} For $X_+$,  
$$h(x,y)=\begin{cases}
	y(N)^2 &\text{if }y\geq 0\text{ and }y_i(1-x_i)=0, \ \forall i\in N\\
	\infty &\text{otherwise.}
\end{cases}
$$
Therefore, for $S \subseteq N$,
\begin{align}g_\alpha(x_S)&=\min_{y\in \R_+^S}-\alpha'y_S+y(S)^2.\label{eq:projectionPos}
\end{align} 
Note that \eqref{eq:projectionPos} is bounded for all $\alpha\in \R^S$, thus $B=\R^N$. Since, for $\alpha_i<0$,  $y_i=0$ in any optimal solution, 
we assume for simplicity that $\alpha\geq 0$ and $B=\R_+^N$. 
From the KKT conditions corresponding to variable $y_k\geq 0$ in \eqref{eq:projectionPos}, we find that
\begin{align}
&2y(S)\geq \alpha_k\label{eq:max},
\end{align}
and, by complementary slackness, \eqref{eq:max} holds at equality whenever $y_k>0$. Moreover, let $j\in S$ such that $\alpha_j=\max_\alpha(S)$; setting $y_j=\alpha_j/2$ and $y_i=0$ for $i\in S\setminus j$, we find a feasible solution for \eqref{eq:projectionPos} that satisfies all dual feasibility conditions \eqref{eq:max} and complementary slackness, and therefore is optimal for the convex optimization problem \eqref{eq:projectionPos}. Thus, we conclude that
$$g_\alpha(x_S)=-\frac{\max_\alpha(S)^2}{4} \cdot$$

\subsubsection{Derivation for $X$} \label{sec:gGeneral} For the general case of $X$,
$$h(x,y)=\begin{cases}
	\big(y(N^+) - y(N^-) \big)^2 &\text{if }y\geq 0\text{ and }y_i(1-x_i)=0, \ \forall i\in N^+ \cup N^-\\
	\infty &\text{otherwise.}
\end{cases}
$$
Therefore, for $S \subseteq N^+ \cup N^-$,
\begin{align}g_\alpha(x_S)&=\min_{y\in \R_+^S}-\alpha'y+\Big(y(N^+\cap S)-y(N^-\cap S)\Big)^2.\label{eq:projectionGen}
\end{align}
If $S\cap N^-=\emptyset$ or $S\cap N^+=\emptyset$, then we find from \S\ref{sec:gPositive} that $g_\alpha(x_S)= -\max_\alpha(S)^2/4$. Now let $S^+:=S\cap N^+$ and $S^-:=S\cap N^-$, and assume $S^+\neq\emptyset$ and $S^-\neq\emptyset$.

\begin{proposition}
Problem 	\eqref{eq:projectionGen} is bounded if and only if
\begin{equation}\max_\alpha(S^+)\leq -\max_\alpha(S^-).
	\label{eq:conditionB}
\end{equation}
\end{proposition}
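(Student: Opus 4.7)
The plan is to split the two directions cleanly. Let $M^+ := \max_\alpha(S^+)$ and $M^- := \max_\alpha(S^-)$, so the condition \eqref{eq:conditionB} reads $M^+ + M^- \leq 0$.

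For the ``only if'' direction, I would prove the contrapositive by exhibiting an explicit unbounded ray when $M^+ + M^- > 0$. Pick $i^\star \in S^+$ and $j^\star \in S^-$ attaining $M^+$ and $M^-$, respectively, and set $y_{i^\star} = y_{j^\star} = \lambda \geq 0$ with all other coordinates zero. Then $y(S^+) - y(S^-) = 0$, so the quadratic term vanishes, and the objective reduces to $-(M^+ + M^-)\lambda$, which tends to $-\infty$ as $\lambda \to \infty$. Hence \eqref{eq:projectionGen} is unbounded.

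For the ``if'' direction, assume $M^+ + M^- \leq 0$ and bound the objective from below. For any $y \in \R_+^S$, the pointwise estimates $\alpha_i \leq M^+$ for $i \in S^+$ and $\alpha_j \leq M^-$ for $j \in S^-$ together with $y \geq 0$ yield
\[
-\alpha'y \geq -M^+ y(S^+) - M^- y(S^-).
\]
Setting $p := y(S^+) \geq 0$, $q := y(S^-) \geq 0$, and $u := p - q$, the objective of \eqref{eq:projectionGen} is bounded below by
\[
-M^+ p - M^- q + (p-q)^2 = -M^+ u + \bigl(-M^+ - M^-\bigr) q + u^2.
\]
Since $-M^+ - M^- \geq 0$ and $q \geq 0$, the middle term is non-negative, so the whole expression is at least $-M^+ u + u^2 \geq -(M^+)^2/4$, a finite constant independent of $y$. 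Thus \eqref{eq:projectionGen} is bounded below.

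I expect the two directions to be essentially symmetric and routine; the only mild subtlety is keeping track of the sign conventions when $M^+$ or $M^-$ is negative in the lower-bound estimate, but the rewriting in terms of $u = p - q$ absorbs that cleanly and reduces the argument to the one-variable quadratic $u \mapsto -M^+ u + u^2$.
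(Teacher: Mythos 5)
Your proposal is correct and follows essentially the same route as the paper: the unbounded direction $e_{i^\star}+e_{j^\star}$ when $M^++M^->0$, and the lower bound obtained by replacing $\alpha_i$ with the maxima and reducing to the one-variable quadratic $-M^+u+u^2\geq -(M^+)^2/4$ are exactly the paper's argument, just written with slightly more explicit algebra.
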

\begin{proof} 
Let $p = \argmax_{i \in S^+} \alpha_i$ and $q =  \argmax_{i \in S^-} \alpha_i$. If $\alpha_p + \alpha_q > 0$ , then
$e_p + e_q$ is an unbounded direction.	Otherwise,
\begin{align*}
-\alpha'y+\Big(y(S^+)-y(S^-)\Big)^2 & \ge -\alpha_p y(S^+) - \alpha_q y(S^-) + \Big(y(S^+)-y(S^-)\Big)^2 \\
& \ge -\alpha_p (y(S^+)-y(S^-)) + \Big(y(S^+)-y(S^-)\Big)^2 \\
& \ge -\alpha_p^2/4,
\end{align*}
where the second inequality follows from  $\alpha_p + \alpha_q \le 0$.
\end{proof}
\ignore{
A necessary condition for $|g_\alpha(x_S)|<\infty$ is that 
\begin{equation}\label{eq:boundedCondition}
\infty>\max_{y\in \R_+^S}\;\alpha'y
\text{  subject to  }y(S^+)-y(S^-)=\zeta
\end{equation} for any fixed $\zeta\in \R$. The linear programming dual of \eqref{eq:boundedCondition} is
\begin{equation}\label{eq:boundedCondition_dual}\min_{w\in \R}\zeta w \text{  subject to  }w\geq\alpha_i \ \ \forall i\in S^+\text{ and } -w\geq \alpha_i \ \ \forall i\in S^-.\end{equation}
 Thus we see that \eqref{eq:boundedCondition} is bounded if and only if the dual \eqref{eq:boundedCondition_dual} is feasible,  i.e., if 
\begin{equation}\max_\alpha(S^+)\leq -\max_\alpha(S^-).
	\label{eq:conditionB}
\end{equation}
}

Note that for \eqref{eq:conditionB} to hold,
if there exists $j\in S^-$ such that $\alpha_j\geq 0$, then $\alpha_i\leq 0$ for all $i\in S^+$. Therefore, either $\alpha_i\leq 0$ for all $i\in S^+$ or $\alpha_j\leq 0$ for all $j\in S^-$. Also note that we may equivalently rewrite \eqref{eq:conditionB} as
\begin{equation*}\alpha_i+\alpha_j\leq 0,\text{ for all } i\in S^+,\; j\in S^-.
\end{equation*}

First, assume that $\alpha_j\leq 0$ for all $j\in S^-$. In this case, there exists an optimal solution of \eqref{eq:projectionGen} where $y(S^-)=0$ and 
\eqref{eq:projectionGen} reduces to \eqref{eq:projectionPos}. Then, we may assume that $\alpha_i\geq 0$ for all $i\in S^+$ as in \S\ref{sec:gPositive}, and arrive at
\begin{align*}g_\alpha(x_S)&=-\frac{\max_\alpha(S^+)^2}{4} \cdot\end{align*}
By symmetry, if $\alpha_i\leq 0$ for all $i\in S^+$,  we may assume that $\alpha_j\geq 0$ for all $i\in S^-$ and
\begin{align*}g_\alpha(x_S)&=-\frac{\max_\alpha(S^-)^2}{4} \cdot\end{align*}

From the discussion above, we see that we can assume in \eqref{eq:supermodular} that
$$B=\left\{\alpha\in \R^N:\alpha_i\alpha_j\leq 0 \text{ and } \alpha_i+\alpha_j\leq0 \ \text{ for all } i\in N^+,j\in N^-\right\}.$$
It is convenient to partition $B$ into two sets so that $B=B^+\cup B^-$, where 
\begin{align*}B^+&=\left\{\alpha\in \R^N:\alpha_i\geq 0 \ \forall i\in N^+,\alpha_j\leq 0 \ \forall j\in N^-,\text{ and } \alpha_i+\alpha_j\leq0 \ \forall i\in N^+,j\in N^-\right\}\\
B^-&=\left\{\alpha\in \R^N:\alpha_i\leq 0 \ \forall i\in N^+,\alpha_j\geq 0 \ \forall j\in N^-,\text{ and } \alpha_i+\alpha_j\leq0 \ \forall i\in N^+,j\in N^-\right\}
\end{align*} 
and analyze the inequalities separately for each set.
Figure~\ref{fig:regionB} depicts regions $B^+$ and $B^-$ for a two-dimensional case.
\begin{figure}[!h]	
\includegraphics[width=0.47\textwidth,trim={5.8cm 2cm 5.8cm 2cm},clip]{./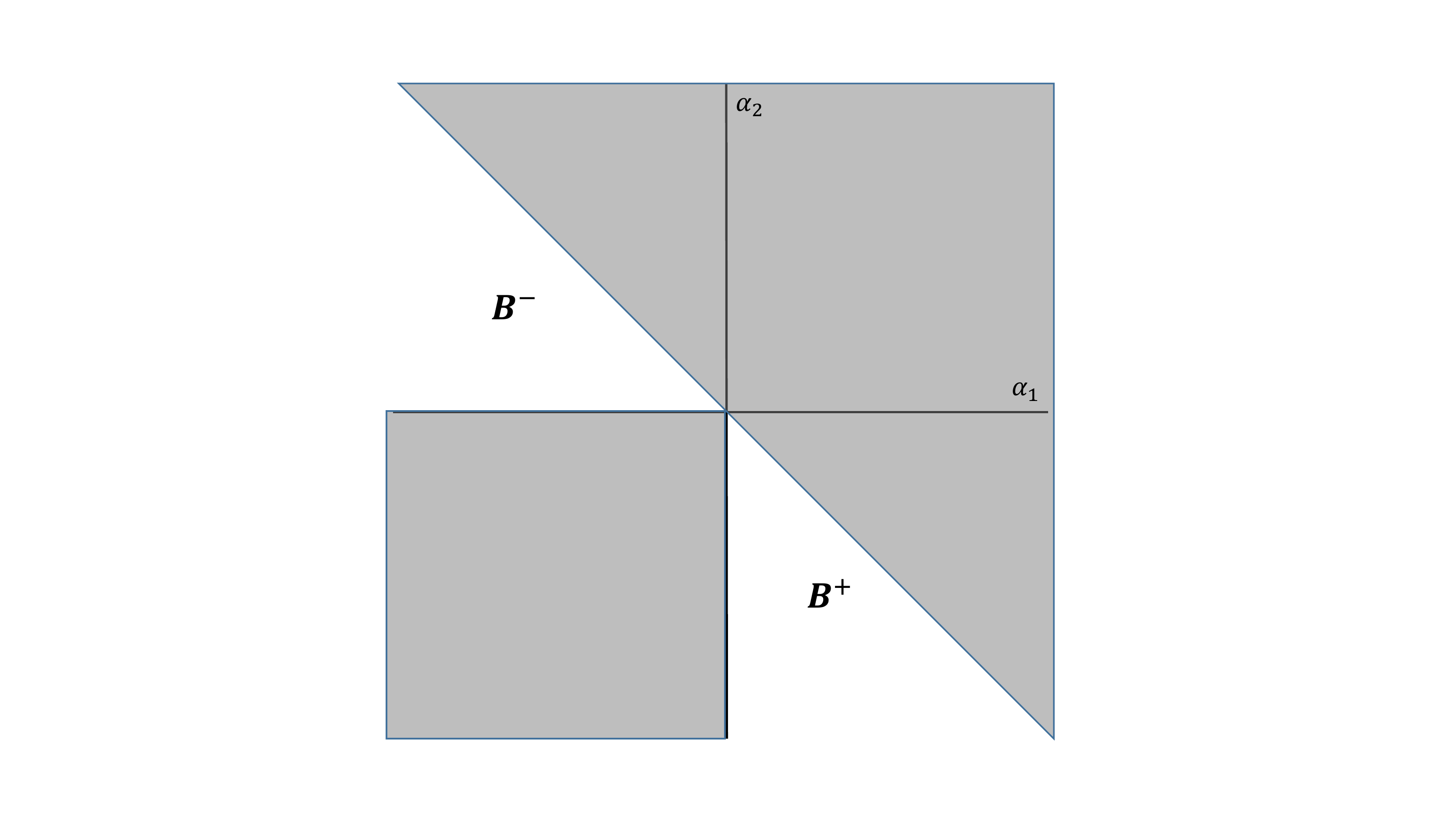}
	\caption{Depiction of $B^+$ and $B^-$ in a two-dimensional example with $N^+=\{1\}$ and $N^-=\{2\}$. The upper right shaded region (triangle) corresponds to the region where $g_\alpha(x)=-\infty$; the lower left shaded region (square) corresponds to the region discarded, as optimal solutions of \eqref{eq:supermodular} can be found in either $B^+$ or $B^-$.}
	\label{fig:regionB}
\end{figure}

Therefore, instead of studying inequalities \eqref{eq:supermodular} directly, one can equivalently study their relaxation where either $\alpha\in B^+$ or $\alpha\in B^-$; consequently, each inequality \eqref{eq:supermodular} corresponds to (the maximum of) two simpler inequalities. Since the sets $B^+$ and $B^-$ are symmetric, and inequalities \eqref{eq:supermodular} corresponding to $\alpha\in B^-$ are simply inequalities where the role of $N^+$ and $N^-$ is interchanged (and $\alpha\in B^+$), the analysis and derivation of the inequalities is simplified. Therefore, in the sequel, we will derive the inequalities for $\alpha\in B^+$ only and then state the inequalities corresponding to $B^-$ by interchanging $N^+$ and $N^-$.

\subsubsection*{Supermodularity} 
For $\alpha \in B^+$, the set function $g_\alpha(x)$ for $X$ is monotone non-increasing, also it is supermodular as 
$\max_\alpha(S^+)$ is submodular. The case for  $\alpha\in B^-$ is analogous. 

\ignore{
and for $\alpha\in B^+$ we find
$$\rho_\alpha(i,S)=\begin{cases}0 & \text{if }i\in N^-\\
0 & \text{if }\alpha_i\leq \max_\alpha(S^+)\\ -\frac{\alpha_i^2-\max_\alpha(S^+)^2}{4}&\text{otherwise.}\end{cases}$$ 
We find that for $S\subseteq T \subseteq N\setminus i$ and $i\in N^+$,
$$\rho_\alpha(i,T)-\rho_\alpha(i,S)=\begin{cases} 0 & \text{if }\alpha_i\leq \max_\alpha(S^+)\leq \max_{\alpha}(T^+)\\
\frac{\alpha_i^2-\max_\alpha(S^+)^2}{4}&\text{if }\max_{\alpha}(S^+)\leq \alpha_i\leq \max_\alpha(T^+)\\
\frac{\max_\alpha(T^+)^2-\max_\alpha(S^+)^2}{4}&\text{if }\max_\alpha(S^+)\leq  \max_\alpha(T^+)\leq \alpha_i,\end{cases}$$\
and in particular $\rho_\alpha(i,T)-\rho_\alpha(i,S)\geq 0$ and $g_\alpha$ is supermodular. The case where $\alpha\in B^-$ is analogous, and function $g_\alpha^p$ is also supermodular since it is a special case.
}

\subsection{Convex hull of epi $g_\alpha$}\label{sec:facetMin}

In this section we show that a small subset of the supermodular inequalities \eqref{eq:supermodularDiscrete1} are sufficient 
to describe the convex hull of the epigraph of the set function $g_\alpha$, i.e., 
\begin{equation}\label{eq:defW} G_\alpha=\left\{(x,t)\in \{0,1\}^N\times \R: -\frac{\max_{i\in N}\{\alpha_i^2x_i\}}{4}\leq t\right\} \cdot
\end{equation}


Given nonempty $ S\subseteq N$,  $\ell\in \argmax_{i\in S}\{\alpha_i\}$,  $k\in \argmax_{i\in N\setminus \ell}\{\alpha_i\}$, and  $T=\left\{i\in N\setminus S:\alpha_i>\alpha_\ell\right\}$; observe that $T=\emptyset$ if and only if $\alpha_\ell\geq \alpha_k$. Then, valid inequalities \eqref{eq:supermodularDiscrete1} for $G_\alpha$ reduce to
\begin{align}\label{eq:supermodularSimplified}
t\geq \begin{cases}-\frac{\alpha_\ell^2}{4}+\frac{\alpha_\ell^2-\alpha_k^2}{4}(1-x_\ell)&\text{if } \alpha_\ell\geq \alpha_k\\
-\frac{\alpha_\ell^2}{4}-\sum\limits_{i\in T}\frac{\alpha_i^2-\alpha_\ell^2}{4}x_i & \text{if } \alpha_\ell\leq \alpha_k.
\end{cases}
\end{align}
If $S=\emptyset$, then valid inequalities \eqref{eq:supermodularDiscrete1} reduce to 
$$t\geq -\sum\limits_{i\in N}\frac{\alpha_i^2}{4}x_i.$$

\begin{remark}\label{rem:necessary}
	Observe that if $\alpha_\ell\geq \alpha_k$, then the inequality $$t\geq -\frac{\alpha_\ell^2}{4}+\frac{\alpha_\ell^2-\alpha_k^2}{4}(1-x_\ell)=-\frac{\alpha_k^2}{4}-\frac{\alpha_\ell^2-\alpha_k^2}{4}x_\ell$$ can also be obtained by setting $S=N\setminus \ell$ (or by choosing any $S\subseteq N\setminus \ell$ such that $k\in S$). Therefore, when considering inequalities \eqref{eq:supermodularSimplified}, we can assume without loss of generality that there exists $k\in \argmax_{i\in N}\{\alpha_i\}$ such that $k\not\in S$ and, thus, the case $\alpha_\ell\geq \alpha_k$ can be ignored.
	\qed
\end{remark}

\begin{remark}\label{rem:number}
	Suppose that the variables are indexed such that $\alpha_1\leq\ldots\leq \alpha_n$, let $\alpha_0=0$, and let $\ell= \max_{i\in S}\{i \}$ if $S\neq \emptyset$ and $\ell=0$ otherwise. Observe that we can assume without loss of generality that $i\in S$ for all $i\leq \ell$, since inequalities \eqref{eq:supermodularSimplified} are the same whether $i\in S$ or not. Therefore, it follows that there are only $n$ inequalities \eqref{eq:supermodularSimplified} given by
	\begin{equation}\label{eq:supermodularSorted} t \ge -\frac{\alpha_\ell^2}{4}-\sum_{i=\ell+1}^n\frac{\alpha_i^2-\alpha_\ell^2}{4}x_i, \quad \ell=0,\ldots,n-1.\end{equation}
\end{remark}

We now show that inequalities \eqref{eq:supermodularSimplified} characterize the convex hull of $G_\alpha$. 
\begin{proposition}\label{prop:hullDiscrete} 
Inequalities \eqref{eq:supermodularSimplified} 
and bound constraints describe $\conv(G_\alpha)$.
\end{proposition}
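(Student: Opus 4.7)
The plan is to derive a closed-form expression for the smallest $t$ for which $(x,t)$ can satisfy all inequalities \eqref{eq:supermodularSimplified}, and then realize any such point as an explicit convex combination of integer points of $G_\alpha$. By Remark~\ref{rem:necessary} and Remark~\ref{rem:number}, I may reindex so that $0\le\alpha_1\le\cdots\le\alpha_n$, set $\alpha_0:=0$, and restrict attention to the $n$ inequalities \eqref{eq:supermodularSorted}. Validity has already been established, and since $(0,\ldots,0,1)$ is a recession direction of both $G_\alpha$ and the LP-relaxation polyhedron, it suffices to check that for every $x\in[0,1]^n$ the point $(x,t^*(x))$ with
\[
t^*(x):=\max_{0\le\ell\le n-1}\Bigl[-\tfrac{\alpha_\ell^2}{4}-\sum_{i=\ell+1}^n\tfrac{\alpha_i^2-\alpha_\ell^2}{4}\,x_i\Bigr]
\]
belongs to $\conv(G_\alpha)$.

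The first step is an explicit formula for $t^*(x)$. Writing $\gamma_i:=\alpha_i^2/4$ and $q_i:=\sum_{j\ge i}x_j$ (with $q_{n+1}:=0$), and expanding $\gamma_\ell=\sum_{j\le\ell}(\gamma_j-\gamma_{j-1})$, a change in the order of summation rewrites the $\ell$-th bracketed term as $\sum_{j=1}^n(\gamma_j-\gamma_{j-1})\,r_j(\ell)$, where $r_j(\ell)=1$ for $j\le\ell$ and $r_j(\ell)=q_j$ otherwise. Since $\gamma_j-\gamma_{j-1}\ge 0$ and $\min\{1,q_j\}\le r_j(\ell)$ for every $j$, this gives the lower bound $-t^*(x)\ge\sum_{j=1}^n(\gamma_j-\gamma_{j-1})\min\{1,q_j\}$. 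Equality is attained by choosing $\ell^*$ such that $q_{\ell^*+1}\le 1<q_{\ell^*}$ (and $\ell^*:=0$ if $q_1\le 1$), since then $r_j(\ell^*)=\min\{1,q_j\}$ for every $j$. Thus $-t^*(x)=\sum_{j=1}^n(\gamma_j-\gamma_{j-1})\min\{1,q_j\}$.

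The second and main step is to exhibit a random subset $S\subseteq N$ with $\Pr[i\in S]=x_i$ and $\mathbb{E}[g_\alpha(x_S)]=t^*(x)$. I use a circular dependent-rounding construction: arrange the elements in the order $n,n-1,\ldots,1$ around a circle of circumference $1$, with element $j$ occupying the arc $A_j$ of length $x_j$ obtained by reducing the interval $[q_{j+1},q_j)$ modulo~$1$; sample $U\sim\mathrm{Unif}[0,1)$ and set $S:=\{j:U\in A_j\}$. Since $x_j\le 1$, $A_j\subseteq[0,1)$ has Lebesgue measure $x_j$, so the marginals are correct. Moreover $\bigcup_{j\ge i}A_j$ is the image of $[0,q_i)$ under mod-$1$ reduction and therefore has Lebesgue measure exactly $\min\{1,q_i\}$, giving $\Pr[\max S\ge i]=\min\{1,q_i\}$, where $\max S$ denotes the largest index in $S$ (or $0$ if $S=\emptyset$). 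Combining with the telescoping identity $\alpha_{\max S}^2=\sum_{i=1}^n(\alpha_i^2-\alpha_{i-1}^2)\,\mathbf{1}[\max S\ge i]$ yields $\mathbb{E}[g_\alpha(x_S)]=-\tfrac{1}{4}\mathbb{E}[\alpha_{\max S}^2]=t^*(x)$, so $(x,t^*(x))$ is a convex combination of points $(x_S,g_\alpha(x_S))\in G_\alpha$ and hence lies in $\conv(G_\alpha)$. The main obstacle is showing that the circular wrap construction simultaneously realizes the marginals $x_i$ and the coverage probabilities $\min\{1,q_i\}$ when $q_1>1$—essentially a contention-resolution argument reflecting the submodularity of $S\mapsto\max_\alpha(S)$.
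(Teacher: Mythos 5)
Your proposal is correct, and it reaches the conclusion by a genuinely different route than the paper. The paper's proof is an LP-duality argument: it poses membership in $\conv(G_\alpha)$ as a primal LP over weights $\lambda_S$, exhibits for each $x$ a dual-feasible solution $(\hat\mu,\hat\gamma)$ whose objective is exactly one of the inequalities \eqref{eq:supermodularSorted}, and then certifies optimality by running a greedy allocation (Algorithm~\ref{alg:primal}) that produces a primal-feasible $\hat\lambda$ satisfying complementary slackness — the key structural fact being that the greedy algorithm only creates sets with at most one element above the threshold index $\ell$, which is where the dual constraints are tight. You instead (i) compute in closed form the pointwise maximum $t^*(x)$ of the right-hand sides, namely $-t^*(x)=\sum_j(\gamma_j-\gamma_{j-1})\min\{1,q_j\}$ with the optimal $\ell^*$ determined by $q_{\ell^*+1}\le 1<q_{\ell^*}$ (the same threshold the paper uses), and (ii) realize $(x,t^*(x))$ directly as an expectation over a circular dependent-rounding distribution, verifying $\mathbb{E}[g_\alpha(x_S)]=t^*(x)$ by matching the coverage probabilities $\Pr[\max S\ge i]=\min\{1,q_i\}$ against the telescoped form of $t^*(x)$. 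The two constructions are close cousins — your arcs-on-a-circle distribution is essentially a randomized rendering of the paper's greedy allocation, and the fact that each point of the circle is covered by at most one arc with index above $\ell^*$ mirrors the paper's complementary-slackness structure — but your verification bypasses duality entirely by computing the expected objective value and comparing it to the explicit maximum, which makes the argument more self-contained. Your closing remark about the "main obstacle" is in fact already resolved in your own text: the marginals are correct because each arc has length $x_j\le 1$ and hence does not self-overlap under the mod-$1$ reduction, and the coverage identity holds because $\bigcup_{j\ge i}A_j$ is the mod-$1$ image of the prefix interval $[0,q_i)$; no further contention-resolution argument is needed. The only minor bookkeeping worth making explicit is that the distribution has finite support (the circle is partitioned into finitely many cells by the arc endpoints, on each of which $S$ is constant), so the expectation is a genuine finite convex combination of points $(x_S,g_\alpha(x_S))\in G_\alpha$.
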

\begin{proof}
Let $(x,t)\in [0,1]^N\times \R$. By definition, $(x,t)\in \conv(G_\alpha)$ if and only
\begin{subequations}\label{eq:hullPrimal}
\begin{align}t\geq \min_{\lambda}\;&-\sum_{S\subseteq N}\frac{\max_\alpha(S)^2}{4}\lambda_S\\
\text{s.t.}\;&\sum_{S\subseteq N: i\in S}\lambda_S=x_i, \ \  i\in N\label{eq:hullPrimal_x}\\
&\sum_{S\subseteq N}\lambda_S=1\label{eq:hullPrimal_convexComb}\\
&\lambda_S\geq 0, \ \  S\subseteq N,
\end{align}
\end{subequations}
where constraints \eqref{eq:hullPrimal_x} can be restated as $x=\sum_{i\in S}\lambda_Sx_S$. 
From linear programming duality, we find the equivalent condition 
\begin{subequations}\label{eq:hullDual}
	\begin{align}t\geq \max_{\mu,\gamma}\;&\sum_{i\in N}x_i\mu_i+\gamma\label{eq:hullDual_obj}\\
	\text{s.t.}\;&\sum_{i\in S}\mu_i+\gamma\leq -\frac{\max_\alpha(S)^2}{4}, \ \ S\subseteq N\label{eq:hullDual_constS}\\
	&\mu\in \R^N,\; \gamma\in \R. 
	\end{align}
\end{subequations}
Any feasible solution $(\mu,\gamma)$ of \eqref{eq:hullDual} yields a valid inequality for $\conv(G_\alpha)$. Moreover, characterizing the optimal solutions of \eqref{eq:hullDual} (for all $x\in [0,1]^N$) results in the convex hull description of $G_\alpha$. 

Suppose, without loss of generality, that $\alpha_1\leq \ldots\leq \alpha_n$, let $\alpha_0=0$, and let
 $\ell\in \{0,\ldots,n-1\}$ be the smallest index such that $\sum_{i=\ell+1}^nx_i\leq 1$; thus, if $\ell>0$, then $\sum_{i=\ell}^nx_i>1$.
We claim that the dual solution given by $\hat\gamma=-\frac{\alpha_\ell^2}{4}$, $\hat\mu_i=0$ for $i\leq \ell$ and $\hat\mu_i=-\frac{\alpha_i^2-\alpha_\ell^2}{4}$ for $i>\ell$ is optimal for \eqref{eq:hullDual}.

First, we verify that $(\hat\mu,\hat\gamma)$ is feasible for \eqref{eq:hullDual}. Observe that for any $S\subseteq\{1,\ldots,\ell\}$, constraint \eqref{eq:hullDual_constS} reduces to $-\frac{\alpha_\ell^2}{4}\leq -\frac{\max_\alpha(S)^2}{4}$, which is indeed satisfied. For any $S$ such that the maximum element $j>\ell$, we find that \eqref{eq:hullDual_constS} reduces to
$\sum_{i\in S:i\neq j}\hat\mu_i\leq 0$; since $\hat\mu\leq 0$, the constraint is satisfied. For $S=\emptyset$, constraint \eqref{eq:hullDual_constS} reduces to $\gamma\leq 0$, which is satisfied.
To verify complementary slackness (later), note
that constraints \eqref{eq:hullDual_constS} corresponding to sets (a) 
$S = T \cup \{j\}$, where $T \subseteq \{1, \ldots, \ell\}$ and $j > \ell$ (i.e., containing exactly one element greater than $\ell$), and (b) $S = T \cup \{\ell\}$, where $T \subseteq \{1, \ldots, \ell-1\}$ (i.e., containing $\ell$ but no greater element) are satisfied at equality.

Finally, for $(\hat \mu,\hat \gamma)$, the objective function \eqref{eq:hullDual_obj} is of the form \eqref{eq:supermodularSorted}:
$$t\geq -\frac{\alpha_\ell^2}{4}-\sum_{i=\ell+1}^n\frac{\alpha_i^2-\alpha_\ell^2}{4}x_i.$$

To verify that $(\hat \mu, \hat \gamma)$ is optimal for \eqref{eq:hullDual}, we construct a primal solution $\hat\lambda$ feasible for \eqref{eq:hullPrimal} satisfying complementary slackness.
The greedy algorithm for constructing $\hat\lambda$ is presented  in Algorithm~\ref{alg:primal} and illustrated with an example in Figure~\ref{fig:greedyAlgorithm}.

\ignore{
First, we provide some insights into the structure of the optimal solutions of \eqref{eq:hullPrimal}. Note that variables $\hat \lambda_S$ for sets $S$ containing an element with a ``large" index value are preferable (assuming $\alpha_1\leq \ldots\leq \alpha_n$), as they have a better objective coefficient. Intuitively, in order to achieve a good objective value while satisfying the constraints \eqref{eq:hullPrimal_x}--\eqref{eq:hullPrimal_convexComb}, one should set $\hat \lambda_S>0$ for sets containing a single large element (and many small ones). An ideal vector $\hat\lambda$ with such characteristics can be done via a greedy algorithm, as illustrated in Figure~\ref{fig:greedyAlgorithm} in an example with $n=5$. A formal description of the algorithm is given in Algorithm~\ref{alg:primal}.
}

\begin{figure}[!h]	
	\begin{adjustbox}{minipage=\linewidth,scale=0.8}
	\subfloat[Buffer $1-x_4-x_5=0.1$ for index $\ell=3$, $\hat\lambda=0$]{\includegraphics[width=0.47\textwidth,trim={6cm 2cm 10cm 2cm},clip]{./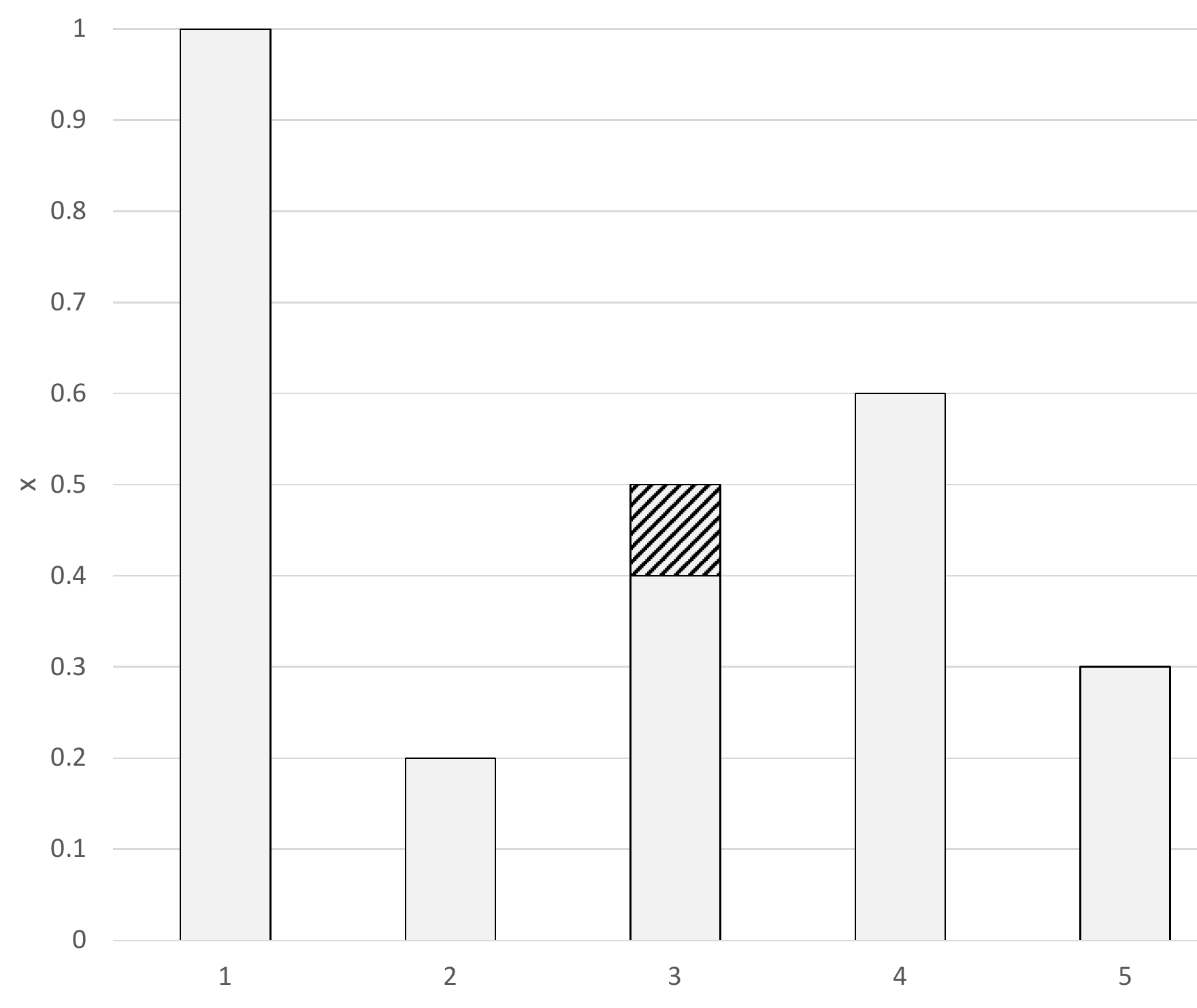}}\ \hfill
	\subfloat[$\hat\lambda_{1,2,3,5}\leftarrow x_2=0.2$. Constraint \eqref{eq:hullPrimal_x} for $x_2$ is met]{\includegraphics[width=0.47\textwidth,trim={6cm 2cm 10cm 2cm},clip]{./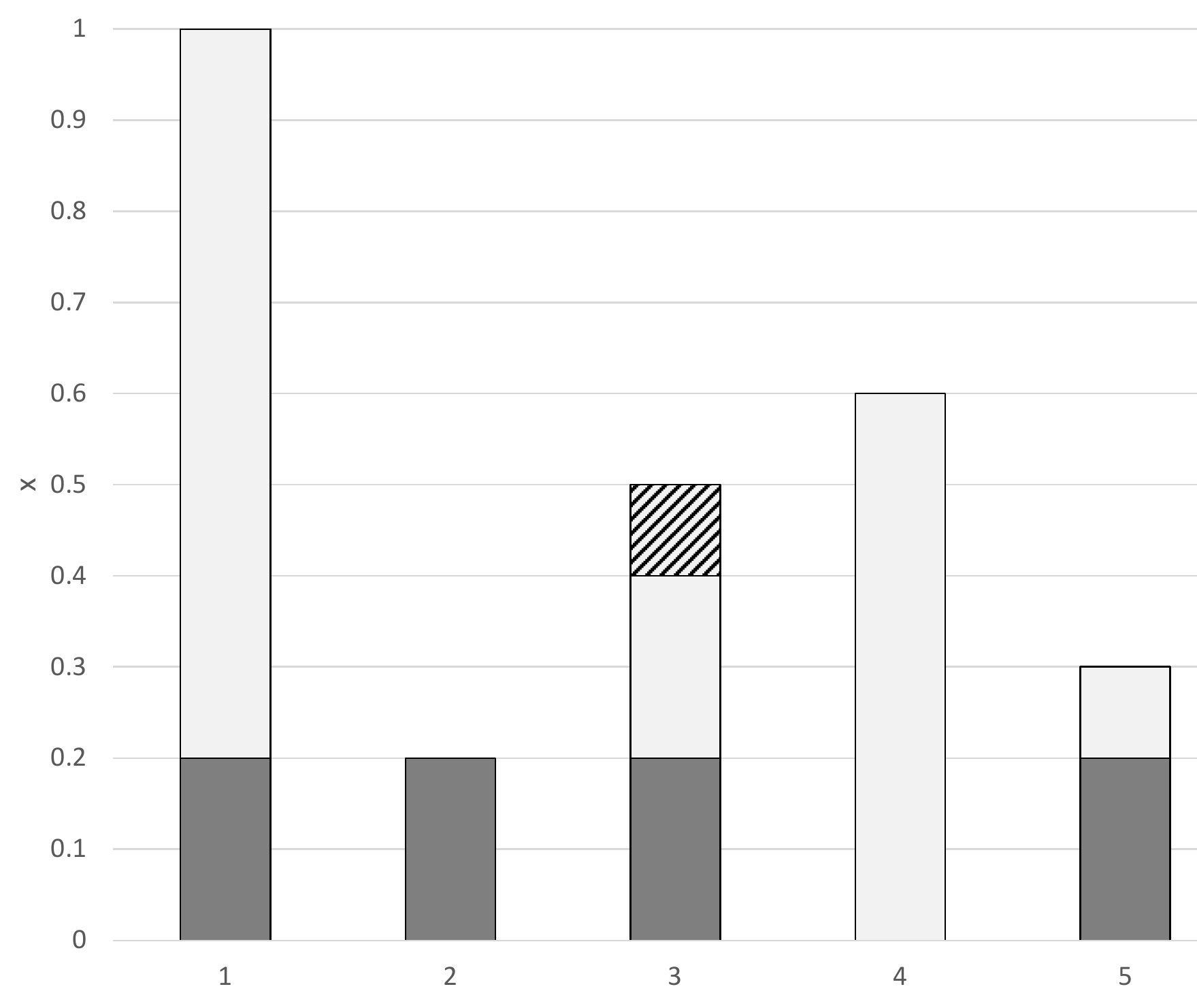}}\\
	\subfloat[$\hat\lambda_{1,3,5}\leftarrow x_5-x_2=0.1$. Constraint \eqref{eq:hullPrimal_x} for $x_5$ is met]{\includegraphics[width=0.47\textwidth,trim={6cm 2cm 10cm 2cm},clip]{./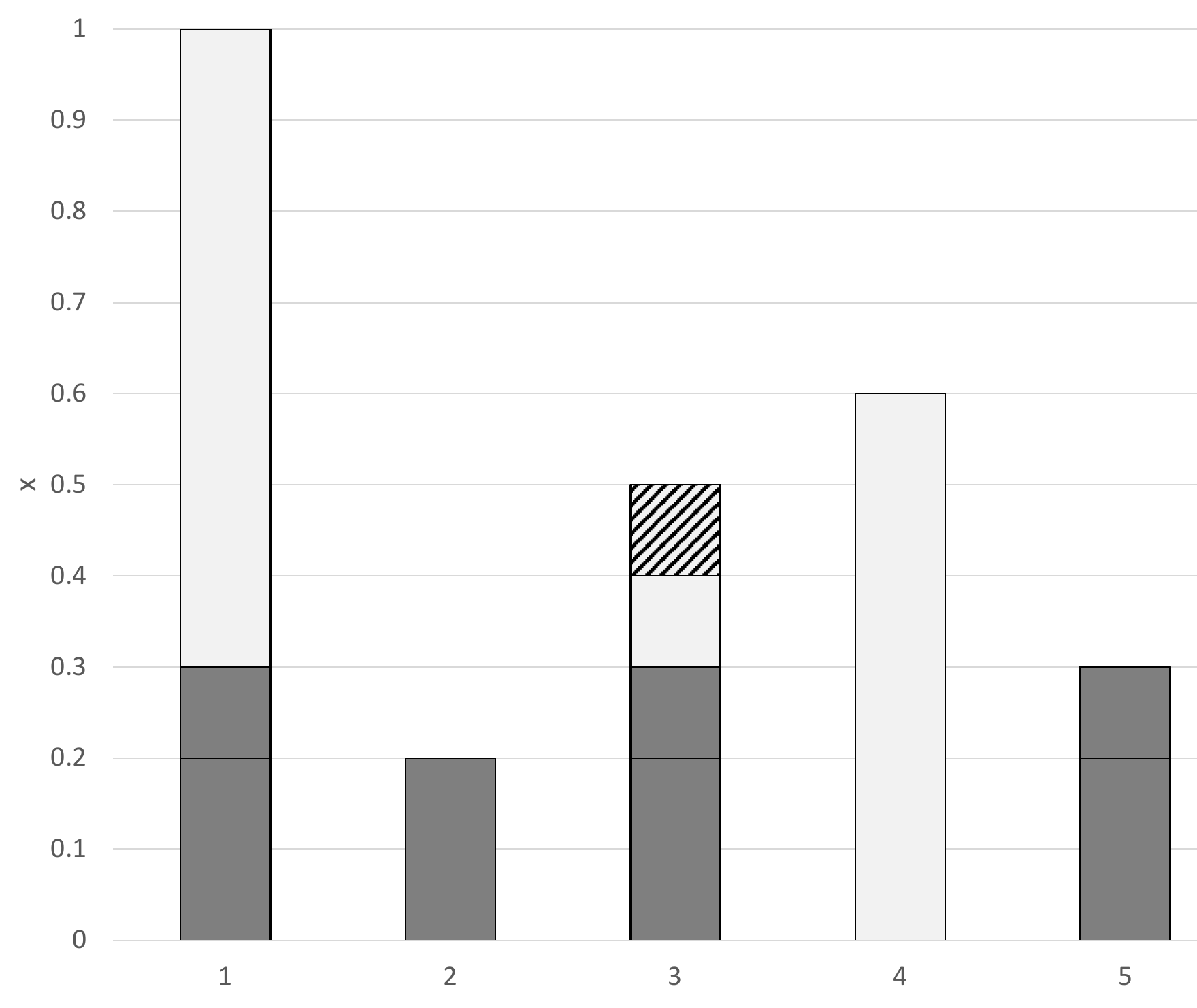}}\ \hfill
	\subfloat[$\hat\lambda_{1,3,4}\leftarrow 0.1$. Constraint \eqref{eq:hullPrimal_x} for $x_3$ is not met due to the buffer]{\includegraphics[width=0.47\textwidth,trim={6cm 2cm 10cm 2cm},clip]{./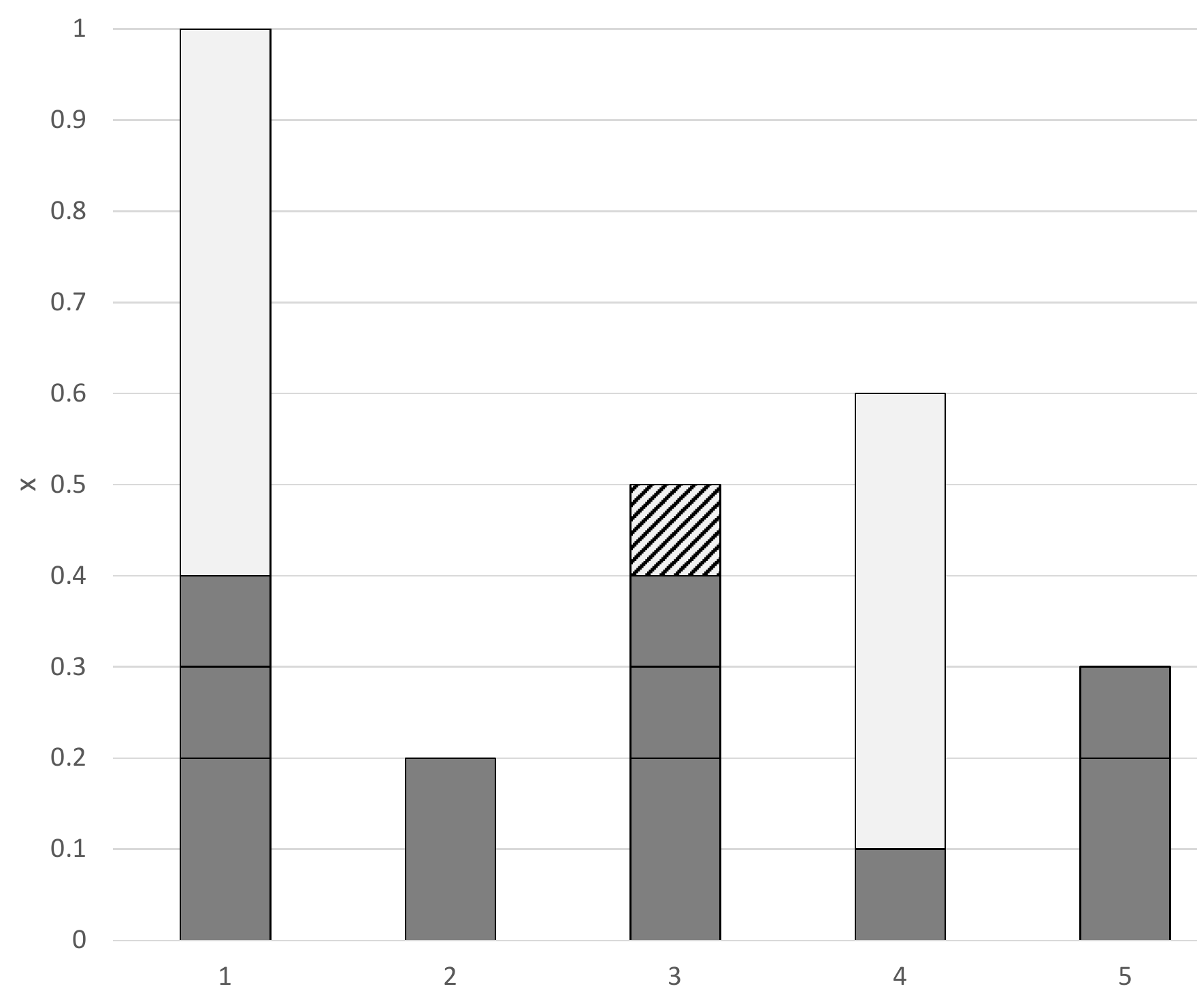}}\\
	\subfloat[$\hat\lambda_{1,4}\leftarrow 0.5$. Constraint \eqref{eq:hullPrimal_x} for $x_4$ is met and the buffer  is removed]{\includegraphics[width=0.47\textwidth,trim={6cm 2cm 10cm 2cm},clip]{./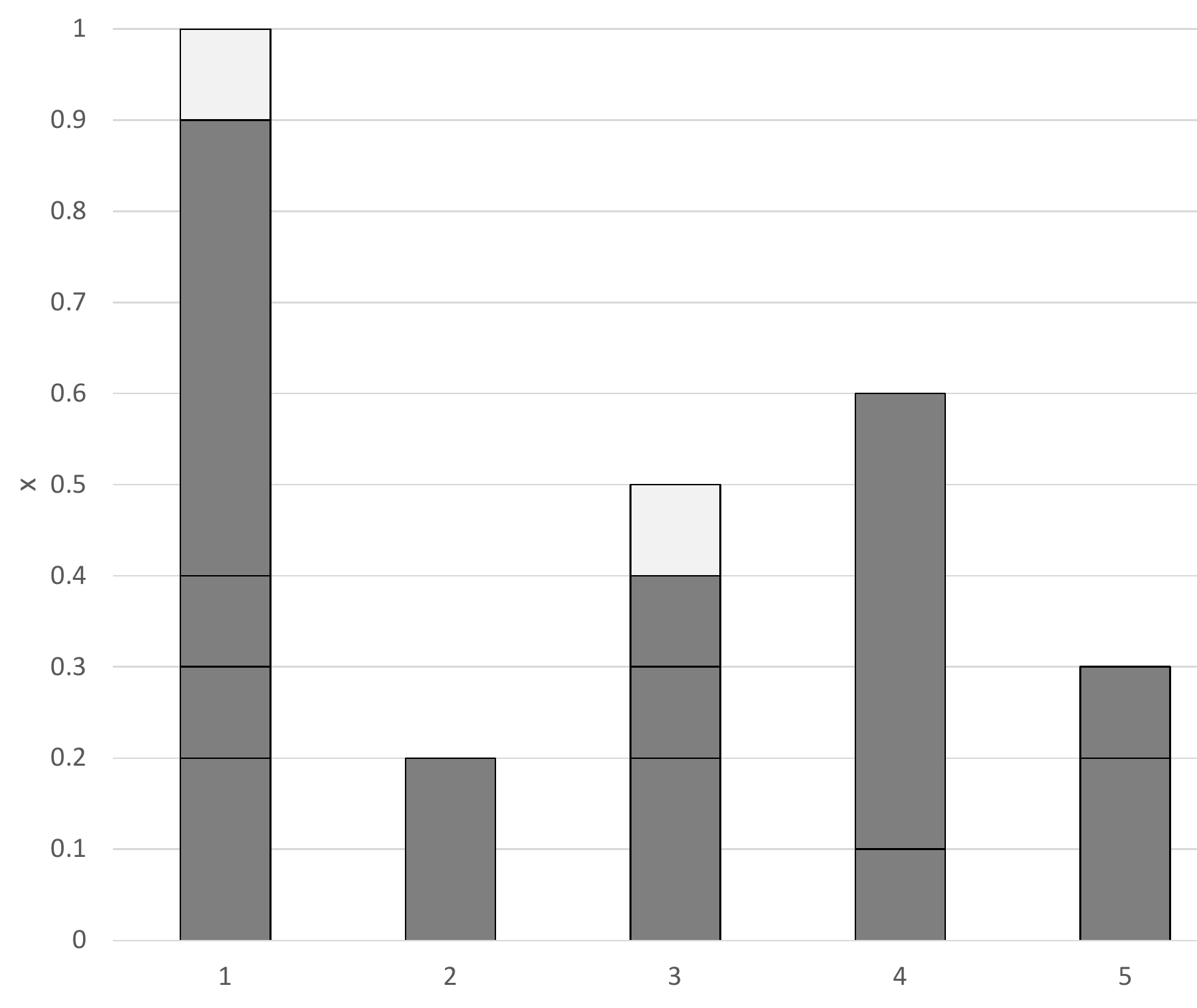}}\ \hfill
	\subfloat[$\hat\lambda_{1,3}\leftarrow 0.1$. Constraints  \eqref{eq:hullPrimal_x} for $x_1,x_3$ and \eqref{eq:hullPrimal_convexComb} are met]{\includegraphics[width=0.47\textwidth,trim={6cm 2cm 10cm 2cm},clip]{./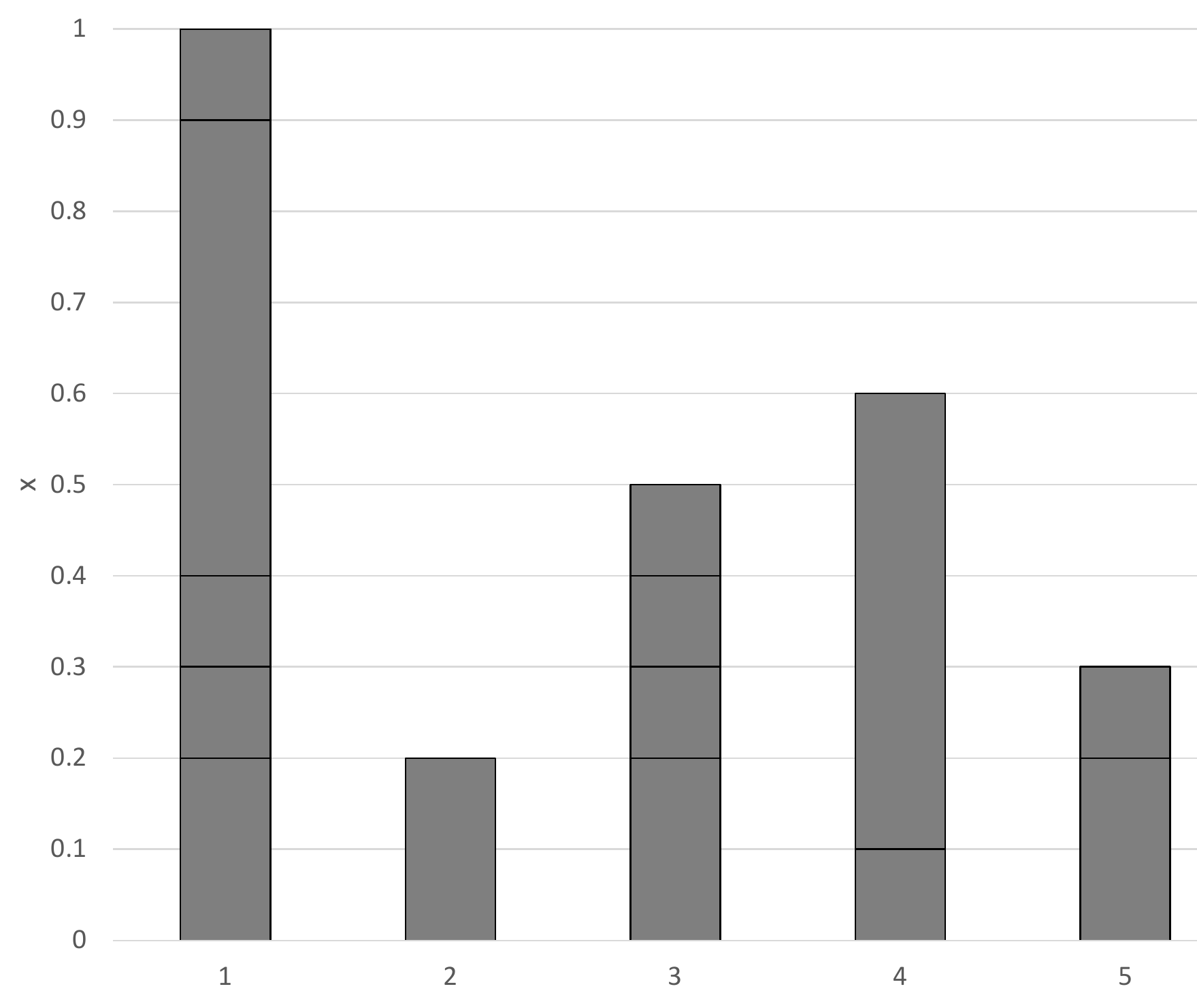}}
	\caption{Algorithm~\ref{alg:primal}  with $x =(1, 0.2, 0.5, 0.6, 0.3)$ and $\ell=3$.}
	\label{fig:greedyAlgorithm}
	\end{adjustbox}
\end{figure}

\begin{algorithm}[h]
	\caption{Algorithm for problem \eqref{eq:hullPrimal}}
	\label{alg:primal} \small
	\begin{algorithmic}[1]
		\renewcommand{\algorithmicrequire}{\textbf{Input:}}
		\renewcommand{\algorithmicensure}{\textbf{Output:}}
		\Require $x_1,\dots, x_n$ with $0 =: \alpha_0 \le \alpha_1\leq\dots\leq \alpha_n$.
		\Ensure $\hat \lambda$ optimal for \eqref{eq:hullPrimal}.
		
		\State $\ell \leftarrow \min\{k: \sum_{i=k+1}^nx_i\leq 1\}$
		\State $\hat \lambda_S \leftarrow 0$ for all $S\subseteq N$ 
		\State $\hat x_i\leftarrow x_i$ for all $i\in N\setminus \ell$
		\If{$\ell>0$}
		\State $\hat x_\ell \leftarrow x_\ell-\left(1-\sum_{i=\ell+1}^n x_i\right)$ \Comment{Buffer for $\ell$; $\hat x_\ell>0$ by definition of $\ell$}
		\EndIf
		\State $\Lambda \leftarrow 0$ \Comment{Variable for $\sum_S \hat \lambda_S$}
		
		\For{$j= n \ldots \ell+1$}
			\While{$\hat x_j > 0$}
			\State $S\leftarrow \{i\leq \ell: \hat x_i>0 \}\cup j$ \label{line:updateS1}
	        \State   \Call{Allocate}{$S$}
				\EndWhile \label{line:endWhile1}
		\EndFor  \label{line:endFor} \Comment{At this point $\Lambda=\sum_{i={\ell + 1}}^n x_i$ }\label{line:saturation}
		\If{$\ell=0$}
		\State $\hat \lambda_{\emptyset} \leftarrow 1-\Lambda$ \Comment{Constraint \eqref{eq:hullPrimal_convexComb} is satisfied}
		\State $\Lambda\leftarrow 1$ \label{line:lambdaEq1}
		\Else

		\State $\hat x_\ell\leftarrow 1-\sum_{i=\ell+1}^nx_i$\label{line:updateXl}\Comment{Buffer is removed}
		\While{$\hat x_\ell>0$}
		\State $S\leftarrow \{i< \ell: \hat x_i>0 \}\cup \ell$ \label{line:updateS2}
			        \State   \Call{Allocate}{$S$}

		\EndWhile \label{line:endWhile2} \Comment{$\Lambda=1$}
		\EndIf
		\State \Return $\hat \lambda$
		
		\State 
		\Function{Allocate}{$S$}
		\State $v\leftarrow \min_{i\in S}\hat x_i$
		\State $\Lambda \leftarrow \Lambda +v$
		\State $\hat\lambda_S\leftarrow v$ \label{line:updateLambda1}
		\State $\hat x_i\leftarrow \hat x_i-v$ for all $i\in S$ \Comment{At this point $\hat x_i=\left(x_i-\Lambda \right)_+$ for all $i<\ell$}
		\EndFunction
	\end{algorithmic}
\end{algorithm}

\ignore{
\begin{algorithm}[h]
	\caption{Algorithm for problem \eqref{eq:hullPrimal}}
	\label{alg:primal}
	\begin{algorithmic}[1]
		\renewcommand{\algorithmicrequire}{\textbf{Input:}}
		\renewcommand{\algorithmicensure}{\textbf{Output:}}
		\Require $x_1\leq \ldots\leq x_n$.
		\Ensure $\hat \lambda$ optimal for \eqref{eq:hullPrimal}.
		
		\State $\hat \lambda_S \leftarrow 0$ for all $S\subseteq N$ 
		\State $\hat x_i\leftarrow x_i$ for all $i\in N\setminus \ell$
		\State $\hat x_\ell \leftarrow x_\ell-\left(1-\sum_{i=\ell+1}^n x_i\right)$\Comment{``Protection", $\hat x_\ell>0$ by definition of $\ell$}
		\State $j\leftarrow n$ \Comment {Index}
		\State $v_{\text{tot}}\leftarrow 0$ \Comment{Auxiliary variable used in the analysis of the algorithm}
		\While{$j>\ell$}
		\State $S\leftarrow \{i\leq \ell: \hat x_i>0 \}\cup j$ \label{line:updateS1}
		\State $v\leftarrow \min_{i\in S}\hat x_i$
		\State $v_{\text{tot}}\leftarrow v_{\text{tot}}+v$
		\State $\hat\lambda_S\leftarrow v$ \label{line:updateLambda1}
		\State $\hat x_i\leftarrow \hat x_i-v$ for all $i\in S$ \Comment{At this point $\hat x_i=\left(x_i-v_{\text{tot}}\right)_+$ for all $i<\ell$}
		\If{$\hat x_j=0$} \Comment{$\hat x_j$ is ``saturated", at this point $v_{\text{tot}}=\sum_{i=j}^n x_i$ }\label{line:saturation}
		\State $j \leftarrow j-1$
		\EndIf
		\EndWhile \label{line:endWhile1}
		\State $\hat x_\ell\leftarrow 1-\sum_{i=\ell+1}^nx_i$\label{line:updateXl}\Comment{Removes the ``protection".}
		\While{$\hat x_\ell>0$}
		\State $S\leftarrow \{i< \ell: \hat x_i>0 \}\cup \ell$ \label{line:updateS2}
		\State $v\leftarrow \min_{i\in S}\hat x_i$
		\State $v_{\text{tot}}\leftarrow v_{\text{tot}}+v$
		\State $\hat\lambda_S\leftarrow v$ \label{line:updateLambda2}
		\State $\hat x_i\leftarrow \hat x_i-v$ for all $i\in S$\Comment{At this point $\hat x_i=\left(x_i-v_{\text{tot}}\right)_+$ for all $i<\ell$}
		\EndWhile \label{line:endWhile2} \Comment{$v_{\text{tot}}=1$}
		\State \Return $\hat \lambda$
	\end{algorithmic}
\end{algorithm}
}

We now check that constraint \eqref{eq:hullPrimal_convexComb} is satisfied. At the end of the algorithm, $\sum_{S\subseteq N}\hat \lambda_S=\Lambda$ (since variable $\Lambda$ is updated each time $\hat\lambda$ is updated). Moreover, at the end of the first cycle (line~\ref{line:endFor}) we have  $\Lambda=\sum_{i=\ell+1}^n x_i$. If $\ell=0$, then $\Lambda=1$ trivially (line~\ref{line:lambdaEq1}); otherwise, at the end of the second cycle (line~\ref{line:endWhile2}) and additional value of $\hat x_\ell=1-\sum_{i=\ell+1}^nx_i$ 
(line~\ref{line:updateXl}) is added to $\Lambda$. Hence, at the end of the algorithm
 $$\Lambda=\sum_{S\subseteq N}\hat \lambda_S=\sum_{i=\ell+1}^n x_i+\left(1-\sum_{i=\ell+1}^nx_i\right)=1.$$

Next, we verify that constraints \eqref{eq:hullPrimal_x} are satisfied. For $i\in \{1,\ldots,\ell-1\}$, at any point in the algorithm, we have that $\sum_{S\subseteq N: i\in S}\lambda_S=x_i-\hat x_i$. Since, at any point, $\hat x_i=\left(x_i-\Lambda\right)_+$ and $\Lambda=1$ at the end of the algorithm, it follows that  $\sum_{S\subseteq N: i\in S}\lambda_S=x_i$. For $i\in \{\ell+1,\ldots,n\}$ we also have that $\sum_{S\subseteq N: i\in S}\lambda_S=x_i-\hat x_i$, and $\hat x_i=0$ at the end (line~\ref{line:saturation}). Finally, for $i=\ell>0$, we have that $$\sum_{S\subseteq N: \ell\in S}\lambda_S=  \left(x_\ell-\left(1-\sum_{i=\ell+1}^n x_i\right)\right)+\left(1-\sum_{i=\ell+1}^nx_i\right)=x_\ell.$$

Finally, to check that $\hat\lambda$ satisfies complementary slackness, it suffices to observe that all updates of $\hat\lambda$ 
correspond to sets $S$ such that exactly one element of $S$ is greater than $\ell$ (line~\ref{line:updateS1}), or to sets $S$ with no element greater than $\ell$ and where $\ell\in S$ (line~\ref{line:updateS2}), where the corresponding dual constraints are satisfied at equality.

Therefore, we conclude that $\hat \lambda$ and $(\hat\mu,\hat \gamma)$ are an optimal primal-dual pair. Since problem \eqref{eq:hullDual} admits for any $x\in [0,1]$ an optimal solution of the form \eqref{eq:supermodularSorted}, it follows that those inequalities and bound constraints describe $\conv(G_\alpha)$. 
\end{proof}

Finally, we obtain the main result of this section: that the (nonlinear) lifted supermodular inequalities 
\begin{align} \label{eq:lifted-plus}
	t\geq & \max_{\alpha\in B^+}-\frac{\max_\alpha(S^+)^2}{4}-\sum\limits_{i\in N^+\setminus S^+}\frac{\left(\alpha_i^2-\max_\alpha(S^+)^2\right)_+}{4}x_i+\alpha'y, \ \ \forall S^+\subseteq N^+ \\
	\label{eq:lifted-minus}
		t\geq & \max_{\alpha\in B^-}-\frac{\max_\alpha(S^-)^2}{4}-\sum\limits_{i\in N^-\setminus S^-}\frac{\left(\alpha_i^2-\max_\alpha(S^-)^2\right)_+}{4}x_i+\alpha'y, \ \ \forall S^-\subseteq N^-
\end{align}
are sufficient to describe the closure of the convex hull of $X$. 
\begin{proposition}
	Lifted supermodular inequalities \eqref{eq:lifted-plus}--\eqref{eq:lifted-minus} and the bound constraints $0\leq x\leq 1$, $y \ge 0$ 
	describe $\clconv(X)$.
\end{proposition}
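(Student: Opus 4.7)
The plan is to apply Theorem~\ref{theo:hullSupermodular} to the epigraph $X = H$ associated with
\[
h(x,y) = \begin{cases} \big(y(N^+) - y(N^-)\big)^2 & \text{on the admissible domain,} \\ \infty & \text{otherwise.} \end{cases}
\]
This reduces the proof to two verifications: (i) $h$ is supermodular in the sense of Definition~\ref{def:supermodular}, and (ii) for every $\alpha \in B$ the linear supermodular inequalities \eqref{eq:supermodularDiscrete} together with the bound constraints describe $\conv(G_\alpha)$. What remains after these is bookkeeping to translate the abstract form \eqref{eq:supermodular1} into the explicit \eqref{eq:lifted-plus}--\eqref{eq:lifted-minus}.

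Both ingredients are essentially in hand from the preceding subsections. For (i), \S\ref{sec:gGeneral} shows that $B$ may be taken to be $B^+ \cup B^-$, and that for $\alpha \in B^+$ one has $g_\alpha(x_S) = -\max_\alpha(S \cap N^+)^2/4$; since $\max_\alpha(\cdot)$ is submodular, $g_\alpha$ is supermodular, and the $\alpha \in B^-$ case is symmetric. For (ii), fix $\alpha \in B^+$: because $g_\alpha$ depends on $x$ only through $x|_{N^+}$, the epigraph $G_\alpha$ factors as a Cartesian product of $[0,1]^{N^-}$ with the epigraph of the same function viewed on $2^{N^+}$, whose convex hull is described by Proposition~\ref{prop:hullDiscrete} (equivalently \eqref{eq:supermodularSorted}, after Remarks~\ref{rem:necessary} and \ref{rem:number}). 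The $\alpha \in B^-$ case is again symmetric.

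Theorem~\ref{theo:hullSupermodular} then yields that the inequalities \eqref{eq:supermodular1} (over all $\alpha \in B$ and $S \subseteq N$) together with bounds describe $\clconv(X)$. To unfold this into the claimed form, I would split the outer maximization over $B = B^+ \cup B^-$ into two separate maxima. For $\alpha \in B^+$, the coefficients $\rho_\alpha(i, S)$ vanish for $i \in N^-$, so \eqref{eq:supermodular1} depends on $S$ only through $S^+ := S \cap N^+$, and it suffices to take $S = S^+ \subseteq N^+$. Substituting the closed-form expressions $g_\alpha(S^+) = -\max_\alpha(S^+)^2/4$ and $\rho_\alpha(i, S^+) = -(\alpha_i^2 - \max_\alpha(S^+)^2)_+/4$ into \eqref{eq:supermodular1} produces exactly \eqref{eq:lifted-plus}. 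The derivation of \eqref{eq:lifted-minus} is identical with $N^+$ and $N^-$ interchanged.

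The main obstacle will be the bookkeeping in this last step: one must confirm that restricting $\alpha$ to $B^+ \cup B^-$ rather than the (a priori larger) set $B$ discards no useful cut, and that the $(\cdot)_+$ truncation in \eqref{eq:lifted-plus}--\eqref{eq:lifted-minus} correctly encodes the case $\alpha_i^2 \le \max_\alpha(S^+)^2$ in which the true increment $\rho_\alpha(i, S^+)$ equals zero rather than a negative quantity (consistent with Remark~\ref{rem:necessary}). Once these two technical points are verified, the proposition follows directly.
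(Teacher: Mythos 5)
Your proposal is correct and follows the same route as the paper: the paper's own proof is a one-line appeal to Proposition~\ref{prop:hullDiscrete} and Theorem~\ref{theo:hullSupermodular}, with the supermodularity of $g_\alpha$, the reduction of $B$ to $B^+\cup B^-$, and the closed-form substitution yielding \eqref{eq:lifted-plus}--\eqref{eq:lifted-minus} all established in the preceding subsections exactly as you describe. The two ``technical points'' you flag at the end are indeed the ones the paper handles in \S\ref{sec:gGeneral} and Remark~\ref{rem:necessary}, so nothing is missing.
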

\begin{proof}
	Follows immediately from Proposition~\ref{prop:hullDiscrete} and Theorem~\ref{theo:hullSupermodular}.
\end{proof}

\begin{remark} 

	We end this section with the remark that optimization of a linear function over $X$ can be done easily using the projection function $g_\alpha$. Consider
	\[ \min \big \{ - \alpha' y + \beta' x  + t: (x,y,t) \in X \big \} \cdot
	\]
	 Projecting out the continuous variables using $g_\alpha$, the problem reduces to
	 \[ 
	 \min_{x \in \{0,1\}^N} \beta' x - \max_{i \in N} \{\alpha_i^2 x_i\}/4,
	 \]
	 which can be solved in linear time.
\end{remark}

\ignore{
\subsection{Convex hull of $X$}\label{sec:hullXg}

\todo{I think we should simply refer to the general result of Thm1 of [40]. The paper is already long and technical.}

Theorem~\ref{theo:hullRankOne} follows directly from Proposition~\ref{prop:hullDiscrete} and \cite[Theorem 1]{richard2010lifting}.
For completeness, we provide here a proof (which uses different arguments than the proof in \cite{richard2010lifting}). 

\begin{proof}[Proof of Theorem~\ref{theo:hullRankOne}]
	Consider optimization of an arbitrary linear function over $X$, i.e., 
	\begin{subequations}\label{eq:optimizationDiscrete}
	\begin{align}
	\min_{(x,y)\in \{0,1\}^N\times \R_+}\;&a'x-b'y+t\\
	\text{s.t.}\;&\left(y(N^+)-y(N^-)\right)^2\leq t\\
	& y_i(1-x_i)=0 \quad \forall i\in N,
	\end{align}
	\end{subequations}
 where $a,b\in \R^N$. Consider also its convex relaxation
	\begin{subequations}\label{eq:optimizationConvex}
	\begin{align}
	\min_{(x,y)\in [0,1]^N\times \R_+}\;&a'x-b'y+t\\
		\text{s.t.}\;&\max_{\alpha\in B}\;g_\alpha(S)+\sum\limits_{i\in N\setminus S}\rho_\alpha(i,S)x_i-\sum\limits_{i\in S}\rho_\alpha(i,N\setminus\{i\})(1-x_i)+\alpha'y\leq t \quad\forall S\subseteq N,
	\end{align}
	\end{subequations}
	where $\rho_\alpha$ correspond to the increments of function $g_\alpha$ defined in \eqref{eq:projectionGen}. We prove that problems \eqref{eq:optimizationDiscrete} and \eqref{eq:optimizationConvex} are equivalent. 
	
	First note that if \eqref{eq:optimizationDiscrete} is unbounded (iff $b\not\in B$) then \eqref{eq:optimizationConvex} is also unbounded since it is a relaxation. If \eqref{eq:optimizationDiscrete} is not unbounded (we assume without loss of generality that $b\in B^+$, as the case where $b\in B^-$ is identical), then noting that $t=\left(y(N^+)-y(N^-)\right)^2$ in an optimal solution and projecting out the continuous variables $y$ (using a process identical to the one used in \S\ref{sec:gGeneral}), we find that \eqref{eq:optimizationDiscrete} is equivalent to 
		\begin{align}\label{eq:optimizationDiscrete1}
		\min_{x\in \{0,1\}^N}\;&a'x -\frac{\max_{i\in N^+}\{b_i^2x_i\}}{4}. 
		\end{align}

Moreover, consider the relaxation of \eqref{eq:optimizationConvex} where, in all constraints, $\alpha=b$, i.e.,
\begin{subequations}\label{eq:optimizationConvex1}
	\begin{align}
	\min_{(x,y)\in [0,1]^N\times \R_+}\;&a'x-b'y+t\\
	\text{s.t.}\;&g_b(S)+\sum\limits_{i\in N\setminus S}\rho_b(i,S)x_i-\sum\limits_{i\in S}\rho_b(i,N\setminus\{i\})(1-x_i)+b'y\leq t \quad\forall S\subseteq N.
	\end{align}
\end{subequations}
The terms $b'y$ in the objective and constraints cancel out, and since $\rho_b(i,S)=0$ when $i\in N^-$, we find that \eqref{eq:optimizationConvex1} reduces to 
\begin{subequations}\label{eq:optimizationConvex2}
\begin{align}
\min_{x\in [0,1]^N}\;&a'x+t\\
	\text{s.t.}\;&g_b(S\cap N^+)+\sum\limits_{i\in N^+\setminus S}\rho_b(i,S\cap N^+)x_i-\sum\limits_{i\in S\cap N^+}\rho_b(i,N^+\setminus\{i\})(1-x_i)\leq t \quad\forall S\subseteq N.
\end{align}
\end{subequations}
Proposition~\ref{prop:hullDiscrete} states precisely that \eqref{eq:optimizationDiscrete1} and \eqref{eq:optimizationConvex2} are equivalent, i.e., \eqref{eq:optimizationConvex2} has an optimal integer solution that is also optimal for \eqref{eq:optimizationConvex1} (with same objective value).

\end{proof}
}

\section{Explicit form of the lifted supermodular inequalities}\label{sec:derivation}

In this section we derive explicit forms of the lifted supermodular inequalities \eqref{eq:lifted-plus}--\eqref{eq:lifted-minus}. In \S\ref{sec:originalSpace} we describe the inequalities in the original space of variables, and describe how to solve the separation problem. 
In \S\ref{sec:extendedSpace} we provide conic quadratic representable inequalities in an extended space, which can then be implemented with off-the-shelf conic solvers. 

\subsection{Inequalities and separation in the original space of variables}\label{sec:originalSpace}

\subsubsection{Lifted inequalities for $X$}\label{sec:originalSpaceXg}

\ignore{
Recall that for set $X$, it suffices to consider inequalities \eqref{eq:supermodular1} with $\alpha\in B_+$ where 
$$B^+=\left\{\alpha\in \R^N:\alpha_i\geq 0, \ i\in N^+;\alpha_j\leq 0, \ j\in N^-; \  \alpha_i+\alpha_j\leq 0, \ i\in N^+,j\in N^-\right\}.$$ 
For a fixed $S^+\subseteq N^+$ problem \eqref{eq:supermodular1} over $B^+$ reduces to 
\begin{align*}
t\geq \max_{\alpha\in B^+}-\frac{\max_\alpha(S^+)^2}{4}-\sum\limits_{i\in N^+\setminus S^+}\frac{\left(\alpha_i^2-\max_\alpha(S^+)^2\right)_+}{4}x_i+\alpha'y. 
\end{align*}
}

We first present the inequalities for the more general set $X$. 
Finding a closed form expression for the lifted supermodular inequalities \eqref{eq:lifted-plus} for all $S^+ \subseteq N^+$
amounts to solving the maximum lifting problem
\begin{align}\label{eq:liftingGen}
t\geq \max_{S^+\subseteq N^+, \alpha\in B^+}-\frac{\max_\alpha(S^+)^2}{4}-\sum\limits_{i\in N^+\setminus S^+}\frac{\left(\alpha_i^2-\max_\alpha(S^+)^2\right)_+}{4}x_i+\alpha'y. 
\end{align}

\begin{proposition}\label{prop:validGen}
	Given $(x,y)\in [0,1]^N\times \R_+^N$, if there exist disjoint sets $L,U\subseteq N^+$ such that 
	\begin{subequations}\label{eq:conditions}
		\begin{align}
		& 1-x(N^+\setminus L)\geq 0\label{eq:conditions_nonnegL}\\
		&\frac{y(L)}{1-x(N^+\setminus L)}< \frac{y_i}{x_i}, \quad i\in N^+\setminus L\label{eq:conditions_validL}\\
		&\frac{y(L)}{1-x(N^+\setminus L)}\geq \frac{y_i}{x_i}, \quad  i\in  L \label{eq:conditions_optL}\\
		& y(U)-y(N^-)\geq 0 \label{eq:conditions_nonnegU}\\
		&\frac{y(U)-y(N^-)}{x(U)}> \frac{y_i}{x_i}, \quad  i\in N^+\setminus U \label{eq:conditions_validU}\\
		&\frac{y(U)-y(N^-)}{x(U)}\leq \frac{y_i}{x_i}, \quad i\in  U\label{eq:conditions_optU}\\
		& \frac{y(L)}{1-x(N^+\setminus L)}< \frac{y(U)-y(N^-)}{x(U)},\label{eq:conditions_valid}
		\end{align}
	\end{subequations}
then inequality \eqref{eq:liftingGen} reduces to 
\begin{equation}\label{eq:originalSpaceGen}
t\geq \frac{y(L)^2}{1-x(N^+\setminus L)}+\sum\limits_{i\in N^+\setminus (L\cup U)}\frac{y_i^2}{x_i}+\frac{\big(y(U)-y(N^-)\big)^2}{x(U)};
\end{equation}
otherwise, inequality \eqref{eq:liftingGen} reduces to $t\geq \big(y(N^+)-y(N^-)\big)^2$.
\end{proposition}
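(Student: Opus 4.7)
My plan is to solve the inner maximization in \eqref{eq:liftingGen} explicitly, identifying the structure of its optimal $\alpha$ and $S^+$. For any $\alpha \in B^+$, the coupling constraint $\alpha_i + \alpha_j \le 0$ forces $\alpha_j \le -\max_{i \in N^+}\alpha_i$ for every $j \in N^-$; since $y_j \ge 0$ and $\alpha_j \le 0$, this bound is tight at the optimum, so $\alpha_j = -M^*$ with $M^* := \max_{i\in N^+}\alpha_i$. For fixed $S^+$, writing $M := \max_\alpha(S^+)$, each $\alpha_i$ with $i \in S^+$ contributes linearly on $[0,M]$, so optimally $\alpha_i = M$. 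For $i \in N^+\setminus S^+$, either $\alpha_i \le M$ (no penalty, driving $\alpha_i$ up to $M$) or $\alpha_i > M$, in which case the first-order condition on the quadratic penalty $\tfrac{(\alpha_i^2 - M^2)}{4}x_i$ gives $\alpha_i = 2y_i/x_i$; the overall maximum is further coupled to the $N^-$ block via $M^*$.

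This structure suggests partitioning $N^+$ into three disjoint blocks: a ``low'' common-value block $L$ on which $\alpha_i = M$, a ``high'' common-value block $U$ on which $\alpha_i = M^*$ (acting jointly with $N^-$), and the ``individual'' block $I := N^+\setminus(L\cup U)$ on which $\alpha_i = 2y_i/x_i$ is set independently. Substituting and simplifying, the inner objective collapses to a concave quadratic in the two scalars $M$ and $M^*$, with leading terms $-\tfrac{M^2(1-x(N^+\setminus L))}{4}$ and $-\tfrac{(M^*)^2 x(U)}{4}$ coupled to linear rewards $M\,y(L)$ and $M^*(y(U)-y(N^-))$, plus the constant $\sum_{i\in I}\tfrac{y_i^2}{x_i}$. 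Maximizing first in $M^*$ yields $M^* = 2(y(U)-y(N^-))/x(U)$ with contribution $\frac{(y(U)-y(N^-))^2}{x(U)}$; maximizing then in $M$ yields $M = 2y(L)/(1-x(N^+\setminus L))$ with contribution $\frac{y(L)^2}{1-x(N^+\setminus L)}$. Adding the three pieces recovers \eqref{eq:originalSpaceGen}.

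The remaining task is to show that the candidate $(L,I,U,M,M^*)$ is actually optimal for the inner problem, which amounts to verifying the KKT/consistency conditions; these translate directly into the hypotheses \eqref{eq:conditions}. The nonnegativities $M \ge 0$ and $M^* \ge 0$ become \eqref{eq:conditions_nonnegL} and \eqref{eq:conditions_nonnegU}; comparing each $y_i/x_i$ to the thresholds $M/2$ and $M^*/2$ to decide membership in $L$, $I$, or $U$ reproduces \eqref{eq:conditions_validL}, \eqref{eq:conditions_optL}, \eqref{eq:conditions_validU}, and \eqref{eq:conditions_optU}; and the ordering $M < M^*$ of the two common values is \eqref{eq:conditions_valid}. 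If no such partition exists, the optimum lies at the degenerate boundary where the two levels coalesce: taking $L = N^+$, $U = I = \emptyset$ with a single common $\alpha$-value on $N^+$ reduces the inner problem to $\max_{M\ge 0}\{-M^2/4 + M(y(N^+)-y(N^-))\} = (y(N^+)-y(N^-))^2$, giving the ``otherwise'' bound. The main obstacle will be careful bookkeeping in the boundary cases, i.e., verifying that \eqref{eq:conditions} are exactly what is needed for the three-level candidate to be globally optimal and handling the $0/0$ conventions at the degenerate limits.
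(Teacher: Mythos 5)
Your proposal follows essentially the same route as the paper's proof: the paper introduces $\gamma=\max_\alpha(S^+)$ and $\beta=\max_\alpha(N^-)$ (your $M$ and $-M^*$), shows $S^+$ is determined by $\gamma$ and that $\alpha$ takes a common value on $N^-$, partitions $N^+$ into the sets $L=\{i:\alpha_i=\gamma\}$, $U=\{i:\alpha_i=-\beta\}$ and the remainder where $\alpha_i=2y_i/x_i$, optimizes the resulting concave problem in the two scalars, and reads off conditions \eqref{eq:conditions} as the feasibility/first-order requirements, with the degenerate case $\gamma=-\beta$ giving $t\geq\big(y(N^+)-y(N^-)\big)^2$. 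The plan is sound and matches the paper's argument step for step.
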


Below we state two remarks on Proposition~\ref{prop:validGen}, and then we prove the result.
\begin{remark}\label{rem:valid}
	Inequalities \eqref{eq:originalSpaceGen} are neither valid for $\clconv(X)$ nor convex for all $(x,y)\in [0,1]^N\times \R_+^N$. Indeed, if condition \eqref{eq:conditions_nonnegL} is not satisfied, then \eqref{eq:originalSpaceGen} may not be convex. Moreover, suppose that $L=\{j\}$ and $U=\{k\}$ for some $j,k\in S^+$: note that setting $x_i=y_i=0$ for all $i\in N\setminus\{j,k\}$, $x_j=x_k=1$, $y_j,y_k>0$, and $t=(y_j+y_k)^2$ is feasible for $X$, but this point is cut off by inequality \eqref{eq:originalSpaceGen} since $\frac{y(L)^2}{1-x(N^+\setminus L)}=\frac{y_j^2}{1-x_k}=\infty$.
	
	In fact, if $(x,y,t)\in \clconv(X)$, then \eqref{eq:originalSpaceGen} is guaranteed to hold only when conditions \eqref{eq:conditions_nonnegL}, \eqref{eq:conditions_validL}, \eqref{eq:conditions_nonnegU}, \eqref{eq:conditions_validU}, and \eqref{eq:conditions_valid} are satisfied. Conditions \eqref{eq:conditions_optL} and \eqref{eq:conditions_optU} do not affect the validity of \eqref{eq:originalSpaceGen} but if they are not satisfied then \eqref{eq:originalSpaceGen} is weak, i.e., a stronger inequality can be obtained from another choice of $L$ and $U$. 
	\qed
\end{remark}

\begin{remark}\label{rem:sign}
If $y(N^+)<y(N^-)$, there exists no $L$ and $U$ satisfying condition \eqref{eq:conditions_nonnegU} in Proposition~\ref{prop:validGen}. However, in this case, the role of $N^+$ and $N^-$ can be interchanged to satisfy \eqref{eq:conditions_nonnegU}; interchanging $N^+$ and $N^-$ is equivalent to letting $\alpha\in B^-$. \qed
\end{remark}


\begin{proof}[Proof of Proposition~\ref{prop:validGen}]
   Let us define variables auxiliary variables $\beta,\gamma\in \R$ as $\beta=\max_\alpha(N^-)$ and $\gamma=\max_\alpha(S^+)$, respectively. Then, inequality \eqref{eq:liftingGen} reduces to
	\begin{subequations}\label{eq:liftingOriginalGen}
		\begin{align}
		t\geq \max_{S^+\subseteq N^+}\max_{\alpha,\beta,\gamma}& -\frac{\gamma^2}{4}-\sum\limits_{i\in N^+\setminus S^+}\frac{\left(\alpha_i^2-\gamma^2\right)_+}{4}x_i+\alpha'y \label{eq:liftingOriginalGen_obj}\\
		\text{s.t.}\;& \alpha_i\leq \gamma, \ \ \ \ \  \ \forall i\in S^+\label{eq:liftingOriginalGen_gamma}\\
		 &\alpha_i \leq \beta, \ \ \ \ \ \forall i\in N^-\label{eq:liftingOriginalGen_beta}\\
		& \beta \leq -\alpha_i, \ \ \forall i\in N^+ \label{eq:liftingOriginalGen_bounded}\\
		&\alpha\in \R^N,\;\gamma\in \R_+,\beta\in \R_-,\label{eq:liftingOriginalGen_bounds}
		\end{align}
	\end{subequations}
where 
constraints \eqref{eq:liftingOriginalGen_gamma} and \eqref{eq:liftingOriginalGen_beta} enforce the definitions of $\gamma$ and $\beta$, and constraints \eqref{eq:liftingOriginalGen_bounded} and \eqref{eq:liftingOriginalGen_bounds} enforce that $\alpha\in B^+$. 

First, observe that there exists an optimal solution of \eqref{eq:liftingOriginalGen} with $\gamma\leq \alpha_i$ for all $i\in N^+$: if $\alpha_i<\gamma$ for some $i\in N^+$, then setting $\alpha_i=\gamma$ results in a feasible solution with improved objective value. Therefore, the value of $S^+$ is completely determined by $\gamma$ since $S^+=\left\{i\in N^+: \alpha_i\leq \gamma\right\}$ 
Also note that $\alpha_i=\beta$ for all $i\in N^-$: if $\alpha_i<\beta$ for some $i\in N^-$, then setting $\alpha_i=\beta$ results in an improved objective value. We now consider two cases:

\paragraph{\textit{Case 1}} Suppose in an optimal solution of \eqref{eq:liftingOriginalGen} we have $\gamma=-\beta$, which implies that $\alpha_i=\gamma$ for all $i\in N^+$ and $\alpha_i=-\gamma$ for all $i\in N^-$. In this case, \eqref{eq:liftingOriginalGen} simplifies to 
$
	t\geq \max_{\gamma\in \R_+} \gamma\big(y(N^+)-y(N^-)\big)-\frac{\gamma^2}{4},
	$
	which, after optimizing for $\gamma$, further reduces to the original rank-one quadratic inequality
	$t\geq \big(y(N^+)-y(N^-)\big)^2.$

\paragraph{\textit{Case 2}} Now suppose $\gamma<-\beta$ in an optimal solution.  Let $L=\left\{i\in N^+:\alpha_i=\gamma \right\}$  and $U=\{i\in N^+:\alpha_i=-\beta \}$. Then, from the discussion above, \eqref{eq:liftingOriginalGen} reduces to 
\begin{subequations}\label{eq:liftingOriginalOptSets}
	\begin{align}
	t\geq \max_{\alpha,\beta,\gamma}\;& \gamma \cdot y(L)-\frac{\gamma^2}{4}\big(1-x(N^+\setminus L)\big)+\sum\limits_{i\in N^+\setminus (L\cup U)}\left(\alpha_iy_i-\frac{\alpha_i^2}{4}x_i\right)\notag\\
	&-\beta\big(y(U)-y(N^-)\big)-\frac{\beta^2}{4} x(U)\\
	\text{s.t.}\;& \gamma<\alpha_i< -\beta, \ \ \ \forall i\in N^+\setminus (L\cup U)\label{eq:liftingOriginalOptSets_constraint}\\
	&\alpha\in \R^N,\;\gamma,\beta\in \R_+.
	\end{align}
\end{subequations}
Observe that for $(L,U,\gamma)$ to correspond to an optimal solution, we must have $1-x(N^+\setminus L)\geq 0$ (otherwise, $\gamma$ can be increased to another $\alpha_i$ while improving the objective value) and $y(U)-y(N^-)\geq 0$ (otherwise, $-\beta$ can be decreased to another $\alpha_i$ while improving the objective value).
When both conditions are satisfied, from first-order conditions we see that $\alpha_i=2y_i/x_i$ for $i\in  N^+\setminus (L\cup U)$, $\gamma= 2 y(L)/\big(1-x(N^+\setminus L)\big)$ and $\beta=-2\big(y(U)-y(N^-)\big)/x(U)$, and \eqref{eq:liftingOriginalOptSets} simplifies to \eqref{eq:originalSpaceGen}.
The constraints $\gamma<\alpha_i$ are satisfied for all $i\in N^+\setminus (L\cup U)$ if and only if \eqref{eq:conditions_validL} hold, constraints $\alpha_i\leq -\beta$ are satisfied for all $i\in N^+\setminus (L\cup U)$ if and only if \eqref{eq:conditions_validU} hold, and constraint $\alpha<-\beta$, which may not be implied if $N^+\setminus (L\cup U)=\emptyset$, is satisfied if and only if \eqref{eq:conditions_valid} holds.

\ignore{
{\color{red}
Note that if the conditions above, i.e., \eqref{eq:conditions_validL},  \eqref{eq:conditions_validU}, \eqref{eq:conditions_valid}, \eqref{eq:conditions_nonnegL} and \eqref{eq:conditions_nonnegU}, are satisfied, then sets $L$ and $U$ and $(\alpha,\beta,\gamma)$ correspond to a \emph{feasible} solution for \eqref{eq:liftingOriginalGen}, and thus \eqref{eq:originalSpaceGen} is a lower bound of the optimal objective value of \eqref{eq:liftingOriginalGen}. Therefore, in this case, inequality \eqref{eq:originalSpaceGen} holds for $(x,y,t)\in \conv(X)$. However, this solution may be suboptimal, and another choices of $L$ and $U$ may yield better objective values. 
}
\todo{This paragraph is unclear to me\\AG: Agree. Added a little more explanation below, and red paragraph can be deleted.}
}

Finally, we verify that first order conditions are satisfied for $j\in L$, this is, setting $\alpha_j>\gamma$ results in a worse solution.
If condition \eqref{eq:conditions_optL}
$$\frac{y(L)}{1-x(N^+\setminus L)}\geq \frac{y_i}{x_i}, \quad \forall i\in  L$$
does not hold for some $j\in L$, then increasing $\alpha_j$ from $\gamma=2\frac{y(L)}{1-x(N^+\setminus L)}$ to $2y_j/x_j$ 
improves the objective value. Similarly, we verify that first order conditions for $j\in U$: if condition \eqref{eq:conditions_optU} 
$$\frac{y(U)-y(N^-)}{x(U)}\leq \frac{y_i}{x_i}, \quad \forall i\in  U$$
does not hold for some $j\in U$, then $\alpha_j$ can be decreased from $\beta = \frac{y(U)-y(N^-)}{x(U)}$
to improve the objective value.
\end{proof}

\subsubsection{Lifted inequalities for $X_+$}\label{sec:originalSpaceXp}
We now present the inequalities for $X_+$, which can be interpreted as a special cases of the inequalities for $X$ given in \S\ref{sec:originalSpaceXg}.
 Recall that for set $X_+$, the set $B$ used in \eqref{eq:supermodular1} is simply $B=\R^N$ (we can assume $B=\R_+^N$ without loss of generality) and a closed form expression for \eqref{eq:supermodular1} requires solving the lifting problem 
\begin{align}\label{eq:liftingPos}
t\geq \max_{S\subseteq N}\max_{\alpha\in \R_+^N}-\frac{\max_\alpha(S)^2}{4}-\sum\limits_{i\in N\setminus S}\frac{\left(\alpha_i^2-\max_\alpha(S)^2\right)_+}{4}x_i+\alpha'y. 
\end{align}

Note that in the proof of Proposition~\ref{prop:validGen}, set $U$ corresponds to the set of variables in $N^+$ where constraint $\alpha_i\leq-\max_{\alpha}(N^-)$ is tight in an optimal solution of \eqref{eq:liftingOriginalGen}. Intuitively, set $X_+$ can be interpreted as a special case of $X$ where $N^+=N$ and $N^-=\emptyset$, and such constraints can be dropped from the lifting problem. Therefore, we may assume $U=\emptyset$ in Proposition~\ref{prop:validGen}. Proposition~\ref{prop:validPos} formalizes this intuition; note however that it is slightly stronger as, unlike Proposition~\ref{prop:validGen}, it guarantees the existence of a set satisfying the conditions of the proposition.

\begin{proposition}\label{prop:validPos} 
	Given any $(x,y)\in [0,1]^N\times \R_+^N$, there exists a (possibly empty) set $L\subseteq N$ such that 
	\begin{subequations}\label{eq:conditionsPos}
		\begin{align}
		& 1-x(N\setminus L)\geq 0\label{eq:conditionsPos_nonnegL}\\
		&\frac{y(L)}{1-x(N\setminus L)}< \frac{y_i}{x_i}, \quad i\in N\setminus L\label{eq:conditionsPos_validL}\\
		&\frac{y(L)}{1-x(N\setminus L)}\geq \frac{y_i}{x_i}, \quad  i\in  L \label{eq:conditionsPos_optL}
		\end{align}
	\end{subequations}
	and inequality \eqref{eq:liftingPos} reduces to 
	\begin{equation}\label{eq:originalSpacePos}
	t\geq \frac{y(L)^2}{1-x(N\setminus L)}+\sum\limits_{i\in N\setminus L}\frac{y_i^2}{x_i}.
	\end{equation}
\end{proposition}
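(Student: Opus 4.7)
The plan is to specialize the argument of Proposition~\ref{prop:validGen} to the case $N^-=\emptyset$, which eliminates the auxiliary variable $\beta$ and the set $U$ from the lifting problem, and then to separately establish that a set $L$ satisfying \eqref{eq:conditionsPos} \emph{always} exists---whereas Proposition~\ref{prop:validGen} only described the bound conditional on the existence of $L,U$ (with a fallback to the bare quadratic otherwise), here no fallback is needed. The hard part will be this existence argument, which I would handle by a greedy threshold scheme.

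For the derivation, I would introduce $\gamma=\max_\alpha(S)$ as in \eqref{eq:liftingOriginalGen}, dropping $\beta$. Since $y\geq 0$ and $\alpha\geq 0$ is unbounded above, the inner maximization over $\alpha$ at fixed $(S,\gamma)$ gives $\alpha_i^*=\gamma$ for $i\in S$ and $\alpha_i^*=\max\{\gamma,\,2y_i/x_i\}$ for $i\in N\setminus S$. Setting $L:=\{i\in N: \alpha_i^*=\gamma\}$ and absorbing ties into $L$ (which is without loss of generality because indices with $2y_i/x_i\leq\gamma$ contribute identically in either regime), the objective collapses to
\[
-\frac{\gamma^2}{4}\bigl(1-x(N\setminus L)\bigr)+\gamma\,y(L)+\sum_{i\in N\setminus L}\frac{y_i^2}{x_i}.
\]
When $1-x(N\setminus L)>0$, maximization over $\gamma\geq 0$ yields $\gamma^*=2\,y(L)/(1-x(N\setminus L))$, and the bound reduces exactly to the right-hand side of \eqref{eq:originalSpacePos}. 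The internal consistency $\alpha_i^*=\gamma^*$ for $i\in L$ and $\alpha_i^*=2y_i/x_i>\gamma^*$ for $i\notin L$ translates to \eqref{eq:conditionsPos_optL} and \eqref{eq:conditionsPos_validL}, respectively, while finiteness of the $\gamma$-maximum forces \eqref{eq:conditionsPos_nonnegL}.

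For existence, I would order the indices so that $y_1/x_1\geq y_2/x_2\geq\cdots\geq y_n/x_n$ (using the paper's conventions $a/0=\infty$ if $a>0$ and $a/0=0$ if $a=0$), set $y_0/x_0:=+\infty$ and $y_{n+1}/x_{n+1}:=0$, and for each $k\in\{0,1,\dots,n\}$ define $L_k:=\{k+1,\dots,n\}$ and $r_k:=y(L_k)/(1-x(N\setminus L_k))$. I would then let $k^*$ be the smallest index in $\{0,1,\dots,n\}$ for which $r_{k^*}\geq y_{k^*+1}/x_{k^*+1}$, and claim $L:=L_{k^*}$ satisfies all three conditions. Because ratios are sorted non-increasingly, once the two boundary inequalities $y_{k^*+1}/x_{k^*+1}\leq r_{k^*}<y_{k^*}/x_{k^*}$ are established, \eqref{eq:conditionsPos_optL} and \eqref{eq:conditionsPos_validL} propagate to every $i\in L$ and every $i\in N\setminus L$, respectively.

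The technical lemma underlying termination is the algebraic identity
\[
r_{k+1}-r_k\;=\;\frac{x_{k+1}\,y(L_k)-\bigl(1-x(N\setminus L_k)\bigr)\,y_{k+1}}{\bigl(1-x(N\setminus L_k)\bigr)\bigl(1-x(N\setminus L_k)-x_{k+1}\bigr)},
\]
whose numerator has the same sign as $r_k-y_{k+1}/x_{k+1}$. A short computation yields the equivalence $r_{k-1}<y_k/x_k\Leftrightarrow r_k<y_k/x_k$, so minimality of $k^*$ automatically delivers $r_{k^*}<y_{k^*}/x_{k^*}$. For the upper bound on $k^*$, let $k_0$ be the largest index with $x_1+\cdots+x_{k_0}\leq 1$; then $x_{k_0+1}>1-x(N\setminus L_{k_0})$ combined with $y_{k_0+1}\leq y(L_{k_0})$ forces $r_{k_0}\geq y_{k_0+1}/x_{k_0+1}$, so $k^*\leq k_0$ and \eqref{eq:conditionsPos_nonnegL} holds automatically. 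Degenerate cases in which the lifting \eqref{eq:liftingPos} is unbounded (for instance $x_i=0$ with $y_i>0$) are absorbed by the same fractional conventions, under which \eqref{eq:originalSpacePos} correctly evaluates to $+\infty$.
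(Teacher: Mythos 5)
Your derivation of the bound is essentially the paper's: you introduce $\gamma=\max_\alpha(S)$, observe that the optimal $\alpha_i$ equals $\gamma$ on $S$ and $\max\{\gamma,2y_i/x_i\}$ off $S$, collapse the objective to $-\tfrac{\gamma^2}{4}(1-x(N\setminus L))+\gamma y(L)+\sum_{i\in N\setminus L}y_i^2/x_i$, and read off \eqref{eq:conditionsPos_validL}--\eqref{eq:conditionsPos_optL} as the consistency conditions for the regime assignment and \eqref{eq:conditionsPos_nonnegL} as the condition for the $\gamma$-maximum to be attained; this matches the appendix proof step for step. Where you genuinely depart from the paper is on the existence claim, which is the feature distinguishing Proposition~\ref{prop:validPos} from Proposition~\ref{prop:validGen}: the paper obtains existence implicitly, by taking an optimal solution of the concave lifting problem \eqref{eq:liftingOriginalPos} and arguing that any optimum must satisfy the stated conditions, whereas you give a direct constructive argument---sort by $y_i/x_i$, take the smallest threshold $k^*$ with $r_{k^*}\geq y_{k^*+1}/x_{k^*+1}$, and verify the three conditions via the mediant inequality $r_{k-1}<y_k/x_k\Leftrightarrow r_k<y_k/x_k$ together with the observation that $k^*$ cannot exceed the largest $k_0$ with $x_1+\cdots+x_{k_0}\leq 1$. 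I checked the mediant identity and the $k_0$ bound; both are correct (the strict inequality $r_{k_0}>y_{k_0+1}/x_{k_0+1}$ degenerates to equality when $y(L_{k_0})=0$, but the defining condition $r_{k_0}\geq y_{k_0+1}/x_{k_0+1}$ still holds, which is all you need). Your route buys an explicit certificate of existence and, as a byproduct, essentially re-derives the $O(n\log n)$ separation procedure of \S\ref{sec:separation}; the paper's route is shorter but leaves the existence of a qualifying $L$ resting on the existence of an optimizer. The only looseness, shared with the paper, is that first-order/consistency conditions are taken to certify the global maximum of the piecewise-defined lifting function rather than just a stationary point, and the degenerate ratios $y_i/x_i$ with $x_i=0$ are dispatched by convention rather than by argument; neither point is a gap relative to the paper's own standard of rigor.
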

The proof of Proposition~\ref{prop:validPos} is given in Appendix~\ref{sec:appendix}. 

\setcounter{example}{0}
\begin{example}[cont] \label{ex1:cont}
	Consider $X_+$ with $n=3$, and assume $x_2=0.6$, $x_3=0.3$, $y_2=0.5$ and $y_3=0.2$. Note that $y_2/x_2\approx 0.83>0.67\approx y_3/x_3$. We now compute the minimum values $t$ such $(x,y,t)\in \clconv(X_+)$, for different values of $(x_1,y_1)$. 
	
	$\bullet$ Let $(x_1,y_1)=(0.01,1)$ and $y_1/x_1=100$. Then $L=\emptyset$ satisfies all conditions \eqref{eq:conditions}: $x(N)=0.91<1$, conditions \eqref{eq:conditionsPos_validL} are trivially satisfied since $y(\emptyset)=0$, and conditions \eqref{eq:conditionsPos_optL} are void. In this case, we find that $(x,y,t)\in \clconv(X_+)$ iff $t\geq 1^2/0.01+0.5^2/0.6+0.2^2/0.3\approx 100.55$. In contrast, $(x,y,t)\in \clconv(X_f)$ iff $t\geq \left(0.01+0.5+0.2\right)^2/0.91\approx 3.18$.
	
	$\bullet$ Let $(x_1,y_1)=(0.1,0.5)$ and $y_1/x_1=5$. Then $L=\{3\}$ satisfies all conditions \eqref{eq:conditions}: $x_1+x_2=0.7<1$, $0.2/0.3\approx0.67<y_2/x_2$ and $0.2/0.3\approx 0.67= y_3/x_3$. In this case,  $(x,y,t)\in \clconv(X_+)$ iff $t\geq 0.2^2/0.3+0.5^2/0.1+0.5^2/0.6\approx 3.05$. In contrast, $(x,y,t)\in \clconv(X_f)$ iff $t\geq \left(0.5+0.5+0.2\right)^2/1= 1.44$.
	
	$\bullet$ Let $(x_1,y_1)=(0.4,0.1)$ and $y_1/x_1=0.25$. Then $L=\{1,3\}$ satisfies all conditions \eqref{eq:conditions}: $x_2=0.6<1$, $(0.1+0.2)/0.4=0.75<y_2/x_2$ and $(0.1+0.2)/0.4=0.75\geq y_3/x_3$. In this case,  $(x,y,t)\in \clconv(X_+)$ iff $t\geq (0.1+0.2)^2/0.4+0.5^2/0.6\approx 0.642$. In contrast, $(x,y,t)\in \clconv(X_f)$ iff $t\geq \left(0.1+0.5+0.2\right)^2= 0.640$.
	
	$\bullet$ Let $(x_1,y_1)=(0.5,0.2)$ and $y_1/x_1=0.4$. Then $L=\{1,2,3\}$ satisfies all conditions \eqref{eq:conditions}: \eqref{eq:conditionsPos_nonnegL} is trivially satisfied, \eqref{eq:conditionsPos_validL} is void and $(0.2+0.5+0.2)/1=0.9\geq y_2/x_2$. In this case,  $(x,y,t)\in \clconv(X_+)$ iff $t\geq (0.2+0.5+0.2)^2= 0.81$, which coincides with $\clconv(X_f)$ and the natural inequality $t\geq y(N)^2$.
	
	Figure~\ref{fig:example} plots the minimum values of $t$ as a function of $(x_1,y_1)$ for $\clconv(X_f)$ and $\clconv(X_+)$. \qed
	
	\end{example}

\subsubsection{Separation}\label{sec:separation}

We now consider the separation problem for inequalities \eqref{eq:liftingGen} and \eqref{eq:liftingPos}, i.e., 
given a point $(\bar x,\bar y)\in [0,1]^N\times \R_+^N$, finding sets $L,U\subseteq N^+$ satisfying the conditions in Proposition~\ref{prop:validGen} or finding $L\subseteq N$ satisfying the conditions in Proposition~\ref{prop:validPos}, respectively. 

\subsubsection*{\textbf{Separation for \eqref{eq:liftingGen}}} First, as pointed out in Remark~\ref{rem:sign}, we verify whether $\bar y(N^+)\geq \bar y(N^-)$ or $\bar y(N^+)<\bar y(N^-)$; in the first case, we use directly the conditions in Proposition~\ref{prop:validGen}, and in the second one, we interchange the roles of $N^+$ and $N^-$ so that $\bar y(N^+)\geq \bar y(N^-)$. Next, index the variables so that $\bar y_1/ \bar x_1\leq \bar y_2/ \bar x_2\leq \ldots\leq \bar y_m/\bar x_m$, where $m=|N^+|$, 
which is done in $O(m\log m)$ by sorting. It follows from the conditions in Proposition~\ref{prop:validGen} that if such sets $L,U\subseteq N^+$ exist, then $L=\left\{i\in N^+: i\leq \ell\right\}$ and $U=\left\{i\in N^+: i\geq u\right\}$ for some $\ell,u\in \{1,\ldots,m\}$ with $\ell<u$. 
Therefore, one can simply enumerate all $m(m-1)/2$ possible values of $(\ell,u)$ and verify whether conditions \eqref{eq:conditions} are satisfied for each candidate set $L$ and $U$. Hence, the separation algorithm runs in $O(n^2)$ time.

\subsubsection*{\textbf{Separation for \eqref{eq:liftingPos}}} First, we index the variables so that $\bar y_1/ \bar x_1\leq  \bar y_2/ \bar  x_2\leq \ldots\leq \bar y_n/\bar x_n$; the indexing process can be accomplished in $O(n\log n)$ time by sorting. It follows from the conditions in Proposition~\ref{prop:validPos} that $L=\left\{i\in N^+: i\leq \ell\right\}$ for some $\ell\in \{1,\ldots,n\}$. Therefore, one can simply enumerate all $n$ possible values of $\ell$ and verify whether conditions \eqref{eq:conditionsPos} are satisfied for each candidate set $L$. Since the sorting step dominates the complexity, the separation algorithm runs in $O(n\log n)$.

\subsection{Conic quadratic valid inequalities in an extended formulation}\label{sec:extendedSpace}
Inequalities \eqref{eq:originalSpaceGen} and \eqref{eq:originalSpacePos}
given in the original space of variables are valid only over restricted parts of the domain. They are neither valid nor convex over the entire domain of the variables, e.g., \eqref{eq:originalSpaceGen} is not convex whenever $x(N^+\setminus L)\geq 1$. 
Thus, such inequalities are difficult to utilize directly by the optimization solvers.
In order to address this challenge, in this section, we give valid conic quadratic reformulations in an extended space, 
which can be readily used by conic quadratic solvers. 

 For a partitioning $(L,R,U)$ of $N^+$ consider the inequality
\small\begin{subequations}\label{eq:extendedGen}
	\begin{align}t\geq \min_{\lambda,\mu,\zeta}\;&\frac{\Big(y(L)-\lambda_0\Big)^2}{1-x(R)-x(U)+\mu(R)+\mu_0}+\sum_{i\in R}\frac{(y_i-\lambda_i)^2}{x_i-\mu_i}+\frac{\Big(y(U)-y(N^-)+\lambda_0+\lambda(R)+\zeta\Big)^2}{x(U)-\mu_0}\label{eq:extendedGen_obj}\\
	\text{s.t.}\;&1-x(R)-x(U)+\mu(R)+\mu_0\geq 0\label{eq:extendedGen_nonNeg}\\
	&\mu_i\leq x_i, \quad i\in R\label{eq:extendedGen_bounds}\\
	&\mu_0\leq x(U)\\
	&\lambda,\mu\in \R_+^{R},\; \lambda_0,\mu_0,\zeta\in \R_+.\label{eq:extendedGen_vars}
	\end{align}
\end{subequations}\normalsize
Note that each inequality \eqref{eq:extendedGen} requires $O(n)$ additional variables and constraints. Moreover, although not explicitly enforced, it is easy to verify that there exists an optimal solution to \eqref{eq:extendedGen} with $\lambda_i\leq y_i$ and $\lambda_0\leq y(L)$. Inequalities \eqref{eq:extendedGen} are convex as they involve linear constraints and sums of ratios of convex quadratic terms and nonnegative linear terms, thus  conic quadratic representable \cite{Alizadeh2003,Lobo1998}. 
We show, in Proposition~\ref{prop:equivalence}, that inequalities \eqref{eq:extendedGen} imply
the strong formulations described in Proposition~\ref{prop:validGen}, 
and, in Proposition~\ref{prop:correct}, that they are valid for $X$. 

\begin{proposition}\label{prop:equivalence}
	If conditions \eqref{eq:conditions_nonnegL}, \eqref{eq:conditions_validL}, \eqref{eq:conditions_nonnegU}, \eqref{eq:conditions_validU} and \eqref{eq:conditions_valid} are satisfied, then $\lambda=\mu = 0$ and $\lambda_0=\mu_0=\zeta=0$ in an optimal solution of \eqref{eq:extendedGen}.
\end{proposition}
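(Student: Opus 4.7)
Proof plan for Proposition~\ref{prop:equivalence}:

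The plan is to show that $(\lambda,\mu,\lambda_0,\mu_0,\zeta)=0$ is a KKT point of the optimization problem \eqref{eq:extendedGen}. Since the objective is a sum of perspective‐type terms $z^2/w$ (each jointly convex on $w>0$) and the constraints are linear, \eqref{eq:extendedGen} is a convex minimization problem, so a KKT point is globally optimal. First I would verify feasibility of the proposed solution: \eqref{eq:extendedGen_nonNeg} becomes $1-x(R)-x(U)=1-x(N^+\setminus L)\geq 0$, which is exactly condition \eqref{eq:conditions_nonnegL}; the bounds \eqref{eq:extendedGen_bounds} and $\mu_0\le x(U)$ are trivially satisfied; and condition \eqref{eq:conditions_nonnegU} guarantees $y(U)-y(N^-)\geq 0$, so the three denominators and numerators behave well.

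Next, since each auxiliary variable is constrained to be nonnegative and the proposed solution has each variable at its lower bound $0$, by convexity it suffices to check that the partial derivative of the objective with respect to each auxiliary variable, evaluated at the base point, is nonnegative. Carrying out these differentiations at the candidate point (where $1-x(R)-x(U)+\mu(R)+\mu_0=1-x(N^+\setminus L)$ and $x(U)-\mu_0=x(U)$):
\begin{align*}
\partial_{\lambda_0}&:\ -\tfrac{2y(L)}{1-x(N^+\setminus L)}+\tfrac{2(y(U)-y(N^-))}{x(U)}\geq 0,\\
\partial_{\lambda_i}&:\ -\tfrac{2y_i}{x_i}+\tfrac{2(y(U)-y(N^-))}{x(U)}\geq 0,\quad i\in R,\\
\partial_{\mu_0}&:\ -\tfrac{y(L)^2}{(1-x(N^+\setminus L))^2}+\tfrac{(y(U)-y(N^-))^2}{x(U)^2}\geq 0,\\
\partial_{\mu_i}&:\ -\tfrac{y(L)^2}{(1-x(N^+\setminus L))^2}+\tfrac{y_i^2}{x_i^2}\geq 0,\quad i\in R,\\
\partial_\zeta&:\ \tfrac{2(y(U)-y(N^-))}{x(U)}\geq 0.
\end{align*}
The $\lambda_0$ and $\mu_0$ inequalities follow from \eqref{eq:conditions_valid}; the $\lambda_i$ inequality for $i\in R\subseteq N^+\setminus U$ is \eqref{eq:conditions_validU}; the $\mu_i$ inequality for $i\in R\subseteq N^+\setminus L$ is \eqref{eq:conditions_validL} (squared); and $\partial_\zeta\geq 0$ uses \eqref{eq:conditions_nonnegU}. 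Thus every one-sided directional derivative into the feasible region is nonnegative, and the zero point satisfies the KKT optimality conditions.

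The main obstacle is simply bookkeeping: ensuring the derivative signs are correctly extracted for the two terms in which each of $\lambda_0,\mu_0$ appears, and confirming that the indexing $R=N^+\setminus(L\cup U)$ gives exactly the membership conditions needed to invoke \eqref{eq:conditions_validL} and \eqref{eq:conditions_validU}. Degenerate cases (e.g.\ $R=\emptyset$, or $x(U)=0$, or $1-x(N^+\setminus L)=0$) should be handled by noting that in those situations the relevant auxiliary variable is either absent or forced to $0$ by feasibility, so the corresponding KKT condition is vacuous under the paper's convention $a/0=\infty$ when $a>0$.
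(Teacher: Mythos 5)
Your proposal is correct and follows essentially the same route as the paper's proof: verify feasibility of the zero point via \eqref{eq:conditions_nonnegL}, then check the KKT/first-order conditions for each nonnegative auxiliary variable at its lower bound and match each sign condition to one of \eqref{eq:conditions_validL}, \eqref{eq:conditions_validU}, \eqref{eq:conditions_valid}, and \eqref{eq:conditions_nonnegU}. The only cosmetic difference is that the paper dispatches $\zeta$ separately by monotonicity and spells out the nonnegativity needed to pass from the linear conditions to their squared versions for the $\mu_0,\mu_i$ derivatives, which your plan implicitly uses.
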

\begin{proof}
	Observe that $\zeta$ does not appear in any constraint of \eqref{eq:extendedGen}. Thus,
	since $y(U)-y(N^-)\geq 0$ and $\lambda,\lambda_0\geq 0$, it follows that $\zeta=0$ in an optimal solution. Moreover, since \eqref{eq:conditions_nonnegL} is satisfied, then setting $\mu=0$ is feasible for \eqref{eq:extendedGen}. 
	Finally, find that KKT conditions are satisfied for $\lambda,\mu=0$ and $\lambda_0=\mu_0=0$ if
	\begin{align*}
	&-\frac{y(L)}{1-x(R)-x(U)}+\frac{y(U)-y(N^-)}{x(U)}\geq 0\tag{$\lambda_0$}\\
	&-\left(\frac{y(L)}{1-x(R)-x(U)}\right)^2+\left(\frac{y(U)-y(N^-)}{x(U)}\right)^2\geq 0\tag{$\mu_0$}\\
	&-\frac{y_i}{x_i}+\frac{y(U)-y(N^-)}{x(U)}\geq 0,\quad\forall i\in R\tag{$\lambda_i$}\\
	&-\left(\frac{y(L)}{1-x(R)-x(U)}\right)^2+\left(\frac{y_i}{x_i}\right)^2\geq 0, \quad\forall i\in R\tag{$\mu_i$}.
	\end{align*}
	The KKT condition above for $\lambda_0$ is precisely \eqref{eq:conditions_valid}. Since $x(R)+x(U)\leq 1$ by \eqref{eq:conditions_nonnegL}, and $y(U)-y(N^-)\geq 0$ by \eqref{eq:conditions_nonnegU}, the KKT condition for $\mu_0$ is equivalent to $\frac{y(L)}{1-x(R)-x(U)}+\frac{y(U)-y(N^-)}{x(U)}\geq 0$, and thus reduces to \eqref{eq:conditions_valid}. The KKT conditions for $\lambda_i$ are satisfied since \eqref{eq:conditions_validU} holds. Finally,
	the KKT conditions for $\mu_i$ can be equivalently stated as $\frac{y(L)}{1-x(R)-x(U)}\leq \frac{y_i}{x_i}$ (since $x(R)+x(U)\leq 1$ and $x,y \ge 0$), which are satisfied since \eqref{eq:conditions_validL} holds.
\end{proof}
Note that when $\lambda=\mu =0$ and $\lambda_0=\mu_0=\zeta=0$, inequality \eqref{eq:extendedGen} reduces to \eqref{eq:originalSpaceGen}. Thus, if sets $L,U$ satisfy the conditions of Proposition~\ref{prop:validGen} for a given $(x,y)$, then there exists $t\in \R$ such that $(x,y,t)\in \conv(X)$ and \eqref{eq:extendedGen} holds at equality. 
It remains to prove that inequalities \eqref{eq:extendedGen} do not cut-off any points in $X$ for any choice of partition $(L,R,U)$. 

\begin{proposition}\label{prop:correct}
 For any partitioning $(L,R,U)$ of $N^+$, inequalities \eqref{eq:extendedGen} are valid for $X$.
\end{proposition}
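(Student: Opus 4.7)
The plan is to show that for every $(x,y,t)\in X$ there is a feasible $(\lambda,\mu,\zeta)$ making the right-hand side of \eqref{eq:extendedGen} at most $s^2:=(y(N^+)-y(N^-))^2\le t$. Two algebraic identities drive the argument: the three ``bucket'' denominators in \eqref{eq:extendedGen_obj} always sum to $1$, and the three numerators always sum to $s+\zeta$, where $s:=y(N^+)-y(N^-)$. I rely on the convention $a/0=0$ when $a=0$ and $a/0=+\infty$ otherwise. Since $x\in\{0,1\}^N$ and $y_i=0$ whenever $x_i=0$, all partial sums $y(\cdot)$ below effectively range over the active coordinates.

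When $s\le 0$, I would take the explicit solution $\mu_i=x_i$ for $i\in R$, $\mu_0=x(U)$, $\lambda_0=y(L)$, $\lambda_i=y_i$, and $\zeta=-s\ge 0$. All feasibility constraints hold trivially, the first denominator equals $1$, and the middle and third denominators vanish. The first numerator is $y(L)-y(L)=0$, each middle numerator is $y_i-y_i=0$, and the third numerator is $y(N^+)-y(N^-)+\zeta=s+\zeta=0$, so every term is $0$ by our convention, giving right-hand side $0\le s^2$.

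When $s>0$, Cauchy--Schwarz applied to the three ratios (denominators summing to $1$, numerators summing to $s+\zeta$) shows that any feasible choice satisfies right-hand side $\ge (s+\zeta)^2\ge s^2$. It therefore suffices to exhibit a feasible point attaining equality, which forces $\zeta=0$ and each ratio equal to $s$. Reparametrizing by the capacities $c_L:=1-x(R)-x(U)+\mu(R)+\mu_0$, $c_i:=x_i-\mu_i$, $c_U:=x(U)-\mu_0$, equality of ratios becomes $\lambda_0=y(L)-sc_L$ and $\lambda_i=y_i-sc_i$; the analogous third identity is automatic because these capacities sum to $1$ and $y(L)+y(R)+y(U)=s+y(N^-)$. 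Feasibility of $(\lambda,\mu)$ translates into the constraints $c_L,c_i,c_U\ge 0$, $c_L\le y(L)/s$, $c_i\le\min(x_i,y_i/s)$, $c_U\le x(U)$, together with $c_L+\sum_{i\in R}c_i+c_U=1$.

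The main step is to verify that this box-plus-simplex system has a solution, i.e., that the sum of the upper bounds is at least $1$:
$$\frac{y(L)}{s}+\sum_{i\in R}\min\!\left(x_i,\frac{y_i}{s}\right)+x(U)\ge 1.$$
Clearing $s$ and using $y(L)+y(R)+y(U)=s+y(N^-)$, this reduces to the key inequality $y(N^-)\ge\sum_{i\in N^+}(y_i-s)_+$. With $T_{>}:=\{i\in N^+:y_i>s\}$, the right-hand side equals $y(T_{>})-s|T_{>}|$, and since $y(T_{>})\le y(N^+)=s+y(N^-)$ one has $y(T_{>})-s|T_{>}|\le y(N^-)+s(1-|T_{>}|)\le y(N^-)$ whenever $|T_{>}|\ge 1$; the $|T_{>}|=0$ case is immediate. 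This integer-flow inequality is the only nontrivial ingredient; the remaining checks are routine bookkeeping.
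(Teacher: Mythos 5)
Your proof is correct, and it takes a genuinely different route from the paper's. The paper proceeds by an explicit six-way case analysis on the sign of $y(N^+)-y(N^-)$ and on whether $x(U)\geq 1$, $x(R)\geq 1$, $y(N^-)\lessgtr y(L)$, etc., constructing a tailored multiplier vector in each case (with an iterative ``peeling'' argument in Cases 5--6). You instead exploit the two structural identities --- denominators summing to $1$ and numerators summing to $s+\zeta$ --- to reduce validity to a single transportation-type feasibility question: can capacities $c_L\in[0,y(L)/s]$, $c_i\in[0,\min(x_i,y_i/s)]$, $c_U\in[0,x(U)]$ sum to $1$? Your counting argument via $T_>$ settles this, and the Cauchy--Schwarz observation is a bonus showing the minimum in \eqref{eq:extendedGen} equals exactly $(y(N^+)-y(N^-))^2$ at integer points (which also illuminates why Proposition~\ref{prop:equivalence} gives tightness). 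This is shorter and more uniform than the paper's argument; the one step you should spell out rather than call ``routine bookkeeping'' is the passage from your key inequality $y(N^-)\ge\sum_{i\in N^+}(y_i-s)_+$ to the needed bound $y(N^-)-y(U)+s\,x(U)\ge\sum_{i\in R}(y_i-sx_i)_+$ when $U$ contains active indices; it does follow, since $\sum_{i\in U}(y_i-s)_+\ge y(U)-s\,x(U)$ by the indicator constraints, but as written the $x(U)$ and $y(U)$ terms are silently absorbed. With that sentence added, the argument is complete.
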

\begin{proof}
	It suffices to show that for any $(x,y)\in X$, i.e., $x_i\in\{0,1\}$ and $x_i(1-y_i)=0$ for all $i\in N$, there exists $(\lambda,\mu,\lambda_0,\mu_0, \zeta)$ satisfying \eqref{eq:extendedGen_nonNeg}--\eqref{eq:extendedGen_vars} such that inequality \eqref{eq:extendedGen_obj} is valid. We prove the result by cases.
	
	\paragraph{\textit{Case 1}} $y(N^+)< y(N^-)$: In this case, we can set $\lambda_i=y_i$ and $\mu_i=x_i$ for $i\in R$, $\lambda_0=y(L)$, $\mu_0=x(U)$, $\zeta=y(N^-)-y(U)-y(L)-y(R)$, and inequality \eqref{eq:extendedGen_obj} reduces to $t\geq 0$, which is valid. 
	
	\paragraph{\textit{Case 2}} $y(N^+)\geq y(N^-)$, $x(R)=0$ and $x(U)=0$: In this case, $y_i=0$, $i\in R\cup U$. Setting $\mu_i=\lambda_i=0$ for $i\in R$, $\lambda_0=y(N^-)$, $\mu_0=0$ and $\zeta=0$, we find that inequality \eqref{eq:extendedGen_obj} reduces to
	$t\geq \big(y(L)-y(N^-)\big)^2=\big(y(N^+)-y(N^-)\big)^2$, which is valid. 
	
	\paragraph{\textit{Case 3}} $y(N^+)\geq y(N^-)$ and $x(U)\geq 1$: Setting $\lambda_i=y_i$ and $\mu_i=x_i$ for $i\in R$, $\lambda_0=y(L)$, $\mu_0=x(U)-1$, and $\zeta=0$,  inequality \eqref{eq:extendedGen_obj} reduces to
	$t\geq \big(y(N^+)-y(N^-)\big)^2$, which is valid. 
	
	\paragraph{\textit{Case 4}} $y(N^+)\geq y(N^-)$, $x(U)= 0$, $x(R)\geq 1$, $y(N^-)<y_i$ for all $i\in R$ and $y(N^-)<y(L)$: In this case, $y_i=0$, for all $i\in U$ and $x_i=1$, for all $i\in R$, we can set $\mu_0=0$, and inequality \eqref{eq:extendedGen} reduces to 
	\begin{subequations}\label{eq:extendedGenSimple}
		\begin{align}t\geq \min_{\lambda,\mu}\;&\frac{\Big(y(L)-\lambda_0\Big)^2}{1-|R|+\mu(R)}+\sum_{i\in R}\frac{(y_i-\lambda_i)^2}{1-\mu_i}\label{eq:extendedGenSimple_obj}\\
		\text{s.t.}\;&1-|R|+\mu(R)\geq 0\label{eq:extendedGenSimple_nonNeg}\\
		&\mu_i\leq 1 \quad \forall i\in R\label{eq:extendedGenSimple_bounds}\\
		& -y(N^-)+\lambda_0+\lambda(R)+\zeta=0\label{eq:extendedGenSimple_zeta0}\\
		&\lambda,\mu\in \R_+^{R},\; \lambda_0,\zeta\in \R_+.\label{eq:extendedGenSimple_vars}
		\end{align}
	\end{subequations}
Constraint \eqref{eq:extendedGenSimple_zeta0} is obtained since the denominator of the third term in \eqref{eq:extendedGen_obj} is zero, thus constraining the numerator to vanish as well. Moreover, since variable $\zeta\geq 0$ only appears in \eqref{eq:extendedGenSimple_zeta0}, after projecting $\zeta$ out we find that constraint \eqref{eq:extendedGenSimple_zeta0} reduces to 
\begin{equation}
\lambda_0+\lambda(R)\leq y(N^-)\label{eq:extendedGenSimple_zeta}.
\end{equation}
Note that constraint \eqref{eq:extendedGenSimple_zeta}, and assumptions $y(N^-)<y_i$ for all $i\in R$ and $y(N^-)<y(L)$, imply that $\lambda_i\leq y_i$ and $\lambda_0\leq y(L)$. Observe that we can set 
\begin{equation*}
\mu_i= 1-\frac{y_i-\lambda_i}{y(L)+y(R)-\lambda(R)-\lambda_0}\quad \forall i\in  R.
\end{equation*}
Indeed, for any feasible $\lambda$, $y(L)+y(R)-\lambda(\bar R)-\lambda_0\geq y(L)+y(R)-y(N^-)\geq 0$; thus $\mu_i\leq 1$. Moreover, $$\frac{y_i-\lambda_i}{y(L)+y(R)-\lambda(R)-\lambda_0}\leq \frac{y_i-\lambda_i}{y(L)+y(R\setminus i)+y_i-\lambda_i}\leq 1$$
thus $\mu_i\geq 0$. For this choice of $\mu$, we find that 
$$1-|R|+\mu(R)=\frac{y(L)-\lambda_0}{y(L)+y(R)-\lambda(R)-\lambda_0}\geq 0.$$

Finally, substituting $1-|R|+\mu(R)$ and $\mu_i$ in \eqref{eq:extendedGenSimple_obj} with their respective values,  \eqref{eq:extendedGenSimple_obj} reduces to 
\begin{align*}t\geq &\min_{\lambda}\;\Big(y(L)-\lambda_0\Big)\Big(y(L)+y(R)-\lambda(R)-\lambda_0\Big)\\
&+\Big(y(L)+y(R)-\lambda(R)-\lambda_0\Big)\sum_{i\in  R}(y_i-\lambda_i)\\
\Leftrightarrow t\geq &\min_{\lambda}\;\Big(y(L)+y(R)-\lambda(R)-\lambda_0\Big)^2=\Big(y(L)+y(R)-y(N^-)\Big)^2,
\end{align*}
and since $y(L)+y(R)=y(N^+)$, this inequality is valid.

\paragraph{\textit{Case 5}} $y(N^+)\geq y(N^-)$, $x(U)= 0$, $x(R)\geq 1$, $y(N^-)< y(L)$ but $y(N^-)\geq y_j$ for some $j\in R$:  In this case, $y_i=0$ for all $i\in U$, and we set $\mu_0=0$. Note that, in \eqref{eq:extendedGen}, we can set $\lambda_j=y_j$ and $\mu_j=x_j$, resulting in the inequality
		\begin{align*}t\geq \min_{\lambda,\mu,\zeta}\;&\frac{\Big(y(L)-\lambda_0\Big)^2}{1-x(R\setminus j)-x(U)+\mu(R\setminus j)}+\sum_{i\in R\setminus j}\frac{(y_i-\lambda_i)^2}{x_i-\mu_i}\\
		&+\frac{\Big(y(U)-y(N^-)+y_j+\lambda_0+\lambda(R\setminus j)+\zeta\Big)^2}{x(U)}\\
		\text{s.t.}\;&1-x(R\setminus j)-x(U)+\mu(R\setminus j)\geq 0\\
		&\mu_i\leq x_i \quad \quad\forall i\in R\setminus j\\
		&\lambda,\mu\in \R_+^{R\setminus j},\; \lambda_0,\zeta\in \R_+.
		\end{align*}
This inequality of the same form as \eqref{eq:extendedGen} but with $\hat R=R\setminus j$ and $\hat y(N^-)=y(N^-) - y_j$. After repeating sequentially this process so that $\lambda_i=y_i$ and $\mu_i=x_i$ for some subset $T \subseteq R$, such that $y(N^-)-y(T)\leq y_i$ for all $i\in R\setminus T$,
and applying a similar strategy as in \textit{Case 4}, we obtain either an inequality of the form
	$$t\geq \Big(y(L)+y(R\setminus T)-\big(y(N^-)- y(T)\big)\Big)^2=\Big(y(N^+)-y(N^-)\Big)^2,$$
	 which is valid.
	 
\paragraph{\textit{Case 6}}  $y(N^+)\geq y(N^-)$, $x(U)= 0$, $x(R)\geq 1$, and $y(N^-)\geq y(L)$: In this case, we can set $\lambda_0=y(L)$, $\mu_0=0$, and \eqref{eq:extendedGen} reduces to	
	\begin{align*}t\geq \min_{\lambda,\mu}\;&\sum_{i\in R}\frac{(y_i-\lambda_i)^2}{x_i-\mu_i}\\
	\text{s.t.}\;&1-x(R)+\mu(R)\geq 0\\
	&\mu_i\leq x_i \quad \quad\forall i\in R\\
	&\lambda(R)\leq y(N^-)-y(L)\\ 
	&\lambda,\mu\in \R_+^{R}.
	\end{align*}
Moreover, if $y(N^-)-y(L)\geq y_j$ for some $j\in R$, then we can set $\lambda_j=y_j$, $\mu_j=y_j$ as done in \textit{Case 5}. After repeating this process, we obtain an inequality of the form 
\begin{subequations}\label{eq:extendedGenSimple3}
\begin{align}t\geq \min_{\lambda,\mu}\;&\sum_{i\in R\setminus T}\frac{(y_i-\lambda_i)^2}{x_i-\mu_i}\label{eq:extendedGenSimple3_obj}\\
\text{s.t.}\;&1-x(R\setminus T)+\mu(R\setminus T)\geq 0 \label{eq:extendedGenSimple3_nonneg}\\
&\mu_i\leq x_i \quad \forall i\in R\setminus T \label{eq:extendedGenSimple3_bound}\\
& \lambda(R\setminus T)\leq y(N^-)-y(L)-y(T)\label{eq:extendedGenSimple3_zeta}\\
&\lambda,\mu\in \R_+^{R\setminus T},
\end{align}
\end{subequations}
where $y(N^-)-y(L)-y(T)<y_i$ for all $i\in R\setminus T$, and therefore $x_i=1$ for all $i\in R\setminus T$.  

Note that constraint \eqref{eq:extendedGenSimple3_zeta} and $y(N^-)-y(L)-y(T)<y_i$ imply that $\lambda_i< y_i$ in any feasible solution.
Then, for all $i \in R\setminus T$, we can set
 $$\mu_i=x_i-\frac{y_i-\lambda_i}{y(R\setminus T)-\lambda (R\setminus T)}.$$
 Clearly, $\mu_i\leq x_i$. Moreover, for all $i \in R\setminus T$, 
  $$\frac{y_i-\lambda_i}{y(R\setminus T)-\lambda (R\setminus T)}\leq \frac{y_i-\lambda_i}{y(R\setminus (T \cup i))+y_i-\lambda_i}\leq 1=x_i,$$
 thus $\mu_i\geq 0$. Finally, 
 $$1-x(R\setminus \Lambda)+\mu(R\setminus T)=1-\frac{y(R\setminus T)-\lambda(R\setminus \lambda)}{y(R\setminus T)-\lambda (R\setminus T)}=0,$$
 and constraint \eqref{eq:extendedGenSimple3_nonneg} is satisfied.
Substituting $x_i-\lambda_i$, $ i \in R \setminus T$, with their explicit form in \eqref{eq:extendedGenSimple3_obj}, we find the equivalent form
\begin{align*}
t\geq \min_{\lambda}\;\Big(y(R\setminus T)-\lambda (R\setminus T)\Big)\sum_{i\in R\setminus T}(y_i-\lambda_i)=&\min_{\lambda}\;\Big(y(R\setminus T)-\lambda (R\setminus T)\Big)^2\\
=&\Big(y(N^+)-y(N^-)\Big)^2,
\end{align*}
which is valid.
\end{proof}

\ignore{

	Specifically, for any $L^+,U^+\subseteq N^+$ with $L^+\cap U^+=\emptyset$, let $S^+=N^+\setminus(L^+\cup U^+)$ and consider the SOCP-representable inequality (in an extended formulation) \todo{While we want to state the results as clearly as possible in the introduction, it is impossible for a reader to make sense of these formulations without reading the rest of the paper. Therefore, I am skeptical of the effectiveness of listing the formulations here. It may turn off some readers...I would rather emphasize how to make use the results in the introduction as motivation (reformulating $yQy$ through rank-one quadratics ).\\
		AG: I may agree. Since that change requires updating section 4 as well, I have not done it yet.}
	\small\begin{subequations}\label{eq:extendedGenIntroPlus}
		\begin{align}t\geq \min_{\lambda,\mu,\zeta}\;&\frac{\left(\sum\limits_{i\in L^+}y_i-\lambda_0\right)^2}{1-\sum\limits_{i\in S^+\cup U^+}x_i+\sum\limits_{i\in S^+}\mu_i+\mu_0}+\sum\limits_{i\in S^+}\frac{(y_i-\lambda_i)^2}{x_i-\mu_i}+\frac{\left(\sum\limits_{i\in U^+}y_i-\sum\limits_{i\in N^-}y_i+\lambda_0+\sum\limits_{i\in S^+}\lambda_i+\zeta\right)^2}{\sum\limits_{i\in U^+}x_i-\mu_0}\\
			\text{s.t.}\;&1-\sum_{i\in S^+\cup U^+}x_i+\sum_{i\in S^+}\mu_i+\mu_0\geq 0\\
			&\mu_i\leq x_i \quad \quad\forall i\in S^+\\
			&\mu_0\leq \sum_{i\in U^+}x_i\\
			&\lambda,\mu\in \R_+^{S^+},\; \lambda_0,\mu_0,\zeta\in \R_+;
		\end{align}
	\end{subequations}\normalsize
	and for any $L^-,U^-\subseteq N^-$ with $L^-\cap U^-=\emptyset$, let $S^-=N^-\setminus(L^+\cup U^+)$, and 
	
}

To derive the corresponding lifted inequalities for $B^-$, it suffices to
interchange $N^+$ and $N^-$. Therefore, for a partitioning $(L, R, U)$ of  $N^-$,
we find the conic quadratic inequalities: 
\small\begin{subequations}\label{eq:extendedGenMinus}
	\begin{align}t\geq \min_{\lambda,\mu,\zeta}\;&\frac{\left(y(L) -\lambda_0\right)^2}{1- x(R) + x(U) + \mu(R) + \mu_0}+
		\sum\limits_{i\in R}\frac{(y_i-\lambda_i)^2}{x_i-\mu_i}+\frac{\left(y(U)-y(N^+) +\lambda_0+\lambda(R)+\zeta\right)^2}{x(U)-\mu_0}\\
		\text{s.t.}\;&1- x(R) -x(U)+ \mu(R)+\mu_0\geq 0\\
		&\mu_i\leq x_i, \quad  i\in R\\
		&\mu_0\leq x(U)\\
		&\lambda,\mu\in \R_+^{R},\; \lambda_0,\mu_0,\zeta\in \R_+.
	\end{align}
\end{subequations}\normalsize

The main result of the paper is stated below.
\begin{theorem}\label{theo:hullGeneral} 
	$\clconv(X)$ is given by bound constraints $0 \le x\le 1$, $y \ge 0$, and inequalities \eqref{eq:extendedGen} and \eqref{eq:extendedGenMinus}. 
\end{theorem}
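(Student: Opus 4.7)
The plan is to establish the two inclusions separately. The inclusion $\clconv(X) \subseteq \{\ldots\}$ follows immediately from Proposition~\ref{prop:correct}, which (together with the symmetric argument obtained by swapping $N^+$ and $N^-$) yields validity of both~\eqref{eq:extendedGen} and~\eqref{eq:extendedGenMinus} for every partitioning, hence for $\clconv(X)$.

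For the reverse inclusion, the substantive direction, the key lever is the proposition immediately preceding Section~\ref{sec:derivation}, which asserts that $\clconv(X)$ is already described by the bound constraints together with the lifted supermodular inequalities~\eqref{eq:lifted-plus}--\eqref{eq:lifted-minus}. Thus it suffices to show that any $(\bar x, \bar y, \bar t)$ satisfying the bound constraints, every~\eqref{eq:extendedGen}, and every~\eqref{eq:extendedGenMinus} must also satisfy each lifted supermodular inequality. I fix $S^+\subseteq N^+$ and argue for~\eqref{eq:lifted-plus}; the case of $S^-\subseteq N^-$ and~\eqref{eq:lifted-minus} is handled symmetrically.

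Since each individual $S^+$-indexed inequality is dominated by the double maximum over $(S^+,\alpha)\in 2^{N^+}\times B^+$, it suffices to show that $\bar t$ bounds this double maximum. By Proposition~\ref{prop:validGen}, the double maximum equals either (a) the right-hand side of~\eqref{eq:originalSpaceGen} for some $(L,U)$ satisfying conditions~\eqref{eq:conditions}, or (b) the trivial value $(\bar y(N^+)-\bar y(N^-))^2$. In case (a), I invoke~\eqref{eq:extendedGen} with partition $(L,\,N^+\setminus(L\cup U),\,U)$: Proposition~\ref{prop:equivalence} shows that the minimum on the right-hand side is attained at $\lambda=\mu=0$, $\lambda_0=\mu_0=\zeta=0$, so the conic quadratic inequality reduces precisely to $\bar t\geq $ RHS of~\eqref{eq:originalSpaceGen}. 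In case (b), assume $\bar y(N^+)\geq \bar y(N^-)$ (otherwise use the symmetric argument with~\eqref{eq:extendedGenMinus}) and invoke~\eqref{eq:extendedGen} with the degenerate partition $(L,R,U)=(N^+,\emptyset,\emptyset)$: the constraint $\mu_0\leq x(U)=0$ forces $\mu_0=0$, and under the convention $0/0=0$ (applicable once the numerator of the $U$-term is forced to vanish), setting $\lambda_0=\bar y(N^-)$ and $\zeta=0$ gives $\bar t\geq (\bar y(N^+)-\bar y(N^-))^2$.

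The main obstacle is the case analysis that separates when the strong formulation~\eqref{eq:originalSpaceGen} versus the trivial bound $(\bar y(N^+)-\bar y(N^-))^2$ is the binding form of the lifted inequality, coupled with the bookkeeping required for the degenerate partitions where denominators in~\eqref{eq:extendedGen} vanish. Fortunately, the degenerate boundaries in case (b) are essentially specializations of Cases~2--3 in the proof of Proposition~\ref{prop:correct}, so the divisional conventions declared in the notation suffice to make the arguments go through cleanly.
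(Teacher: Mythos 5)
Your proof is correct and follows essentially the route the paper intends: the paper states Theorem~\ref{theo:hullGeneral} without a displayed proof, relying on Proposition~\ref{prop:correct} for validity and on Propositions~\ref{prop:equivalence} and~\ref{prop:validGen} together with the convex-hull result for the lifted supermodular inequalities \eqref{eq:lifted-plus}--\eqref{eq:lifted-minus} for sufficiency, exactly as you assemble them. The one piece you supply that the paper leaves implicit is the ``otherwise'' case of Proposition~\ref{prop:validGen}, which you correctly discharge via the degenerate partition $(N^+,\emptyset,\emptyset)$ (resp.\ $(N^-,\emptyset,\emptyset)$), where $\mu_0=0$ and finiteness forces $\lambda_0+\zeta=y(N^-)$, so the minimum equals $\big(y(N^+)-y(N^-)\big)^2$.
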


\ignore{
\todo{Do we need to say closure? Is conv hull not closed?\\
	AG: Consider $X_-^2$, which is a special case of $X$. Point $(x_1,x_2,y_1,y_2,t)=(0,0,1,1,0)$ is not a convex combination of points in $X_-^2$, and thus not in $\conv(X_-^2)$. However, $(0,0,1,1,0)=\lim_{\lambda\to 0} (1-\lambda)(0,0,0,0,0)+\lambda (1,1,1/\lambda,1/\lambda,0)$, thus this point belongs to the closure $\clconv(X_-^2)$.}
}

\ignore{
To derive conic quadratic inequalities for $X_+$ using the inequalities for $X$, it suffices to write $X_+$ as 
\begin{align*}X_+=\Big\{(x,y,t)&\in \{0,1\}^{n+2}\times \R_+^{n+2}\times \R_+: \left(\sum_{i=1}^ny_i+y_{n+1}-y_{n+2}\right)^2\leq t,\\
	& y_{n+1}=y_{n+2}=x_{n+1}=x_{n+2}=0,\; y_i(1-x_i)=0 \ \forall i\in \{1,\ldots,n+2\}\Big\}
\end{align*}
Then, letting $N^+=\left\{1,\ldots,n+1\right\}$, $N^-=\{n+2\}$, and $U=\{n+1\}$,  for $L\subseteq N$ 
}

For the positive case of $X_+$ with $N^- =\emptyset$, for a partitioning $(L, R)$ of $N$, 
inequalities \eqref{eq:extendedGen}  reduce to
\begin{subequations}\label{eq:extendedPos}
	\begin{align}t\geq \min_{\mu}\;&\frac{y(L)^2}{1-x(R)+\mu(R)}+\sum_{i\in R}\frac{y_i^2}{x_i-\mu_i}\label{eq:extendedPos_obj}\\
	\text{s.t.}\;&1-x(R)+\mu(R)\geq 0\label{eq:extendedPos_nonNeg}\\
	&\mu_i\leq x_i,  \quad \quad i\in R \label{eq:extendedPos_bounds}\\
	&\mu\in \R_+^{R}.\label{eq:extendedPos_vars}
	\end{align}
\end{subequations}
Note that each inequality \eqref{eq:extendedPos} also requires $O(n)$ additional variables and constraints but is significantly simpler compared to \eqref{eq:extendedGen}.

\begin{theorem}\label{theo:hullPosiitve} 
	$\clconv(X_+)$ is given by bound constraints $0 \le x \le 1$, $y \ge 0$, and inequalities \eqref{eq:extendedPos}.
\end{theorem}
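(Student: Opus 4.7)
The plan is to mirror the two-step structure used for Theorem~\ref{theo:hullGeneral}. Since $X_+$ can be viewed as the specialization of $X$ with $N^-=\emptyset$, and formulation \eqref{eq:extendedPos} is obtained from \eqref{eq:extendedGen} by taking $U=\emptyset$ and dropping the $\lambda,\lambda_0,\zeta$ variables (the third summand vanishes when $U=\emptyset$ and $y(N^-)=0$), the result should be provable by essentially specializing Propositions~\ref{prop:equivalence} and \ref{prop:correct}. Concretely, I would prove: \textbf{(i)} validity of \eqref{eq:extendedPos} for every $(x,y,t)\in X_+$, and \textbf{(ii)} that whenever a partition $(L,R)$ of $N$ satisfies the conditions \eqref{eq:conditionsPos_nonnegL}--\eqref{eq:conditionsPos_validL} of Proposition~\ref{prop:validPos}, the minimization in \eqref{eq:extendedPos} is attained at $\mu=0$ and thus reduces to the original-space inequality \eqref{eq:originalSpacePos}. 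Combined with the fact, already established via Proposition~\ref{prop:hullDiscrete} and Theorem~\ref{theo:hullSupermodular}, that the inequalities \eqref{eq:originalSpacePos} describe $\clconv(X_+)$, these two facts give both containments.

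For (ii), I would check the KKT conditions of the convex minimization in \eqref{eq:extendedPos} at the candidate point $\mu=0$. The constraint $1-x(R)+\mu(R)\geq 0$ at $\mu=0$ is exactly \eqref{eq:conditionsPos_nonnegL}, and the stationarity conditions for each $\mu_i$, $i\in R$, read
\[
-\Bigl(\tfrac{y(L)}{1-x(R)}\Bigr)^2+\Bigl(\tfrac{y_i}{x_i}\Bigr)^2 \ge 0,
\]
which under $1-x(R)\ge 0$ and $y\ge 0$ are equivalent to $\tfrac{y(L)}{1-x(R)}\le \tfrac{y_i}{x_i}$, i.e., condition \eqref{eq:conditionsPos_validL}. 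Hence $\mu=0$ is optimal and \eqref{eq:extendedPos} collapses to \eqref{eq:originalSpacePos}; since Proposition~\ref{prop:validPos} guarantees that for every $(x,y)\in[0,1]^N\times\R_+^N$ such an $L$ exists, the projection of the extended feasible region onto $(x,y,t)$-space is contained in $\clconv(X_+)$.

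For (i), I would fix $(x,y,t)\in X_+$ and produce an explicit feasible $\mu$. When $y(N)=0$, $y\equiv 0$ and $\mu=0$ trivially makes the right-hand side zero. Otherwise, choose
\[
\mu_i=x_i-\frac{y_i}{y(N)},\quad i\in R.
\]
Because $y_i(1-x_i)=0$ and $y_i\le y(N)$, this yields $0\le \mu_i\le x_i$; moreover $1-x(R)+\mu(R)=y(L)/y(N)\ge 0$, so all constraints \eqref{eq:extendedPos_nonNeg}--\eqref{eq:extendedPos_vars} hold. Substituting into \eqref{eq:extendedPos_obj} gives
\[
\frac{y(L)^2}{y(L)/y(N)}+\sum_{i\in R}\frac{y_i^2}{y_i/y(N)}=y(N)\bigl(y(L)+y(R)\bigr)=y(N)^2\le t,
\]
so \eqref{eq:extendedPos} is valid. (Equivalently, Cauchy--Schwarz gives $\min\{\tfrac{y(L)^2}{s}+\sum_{i\in R}\tfrac{y_i^2}{r_i}: s+\sum_{i\in R} r_i=1,\ s,r_i\ge 0\}=y(N)^2$, and the constraints of \eqref{eq:extendedPos} enforce $s+\sum r_i=1$ with $s\ge 0$ and $0\le r_i\le x_i$, which is compatible with our choice.)

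The main obstacle I anticipate is the validity argument when $x(R)\ge 1$: a naive attempt to set $\mu_i=x_i$ creates indeterminate $0/0$ or $\infty$ terms, exactly the difficulty encountered in Cases 4--6 of Proposition~\ref{prop:correct}. The explicit Cauchy--Schwarz-guided choice $\mu_i=x_i-y_i/y(N)$ avoids this difficulty uniformly across all integer patterns of $x$, so the $X_+$ case ultimately avoids the delicate multi-case analysis needed for Theorem~\ref{theo:hullGeneral}. Once (i) and (ii) are in hand, the theorem follows: the projection of the extended formulation is contained in the intersection of all inequalities \eqref{eq:originalSpacePos} (by (ii) together with Proposition~\ref{prop:validPos}), which equals $\clconv(X_+)$; and (i) shows the reverse containment.
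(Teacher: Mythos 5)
Your proposal is correct, and its overall architecture (validity of the extended inequalities for $X_+$, plus the reduction of \eqref{eq:extendedPos} to the original-space inequality \eqref{eq:originalSpacePos} under the conditions of Proposition~\ref{prop:validPos}, plus the supermodular convex-hull result) is exactly the architecture the paper relies on: the paper derives Theorem~\ref{theo:hullPosiitve} by specializing Propositions~\ref{prop:equivalence} and~\ref{prop:correct} to $N^-=\emptyset$ (equivalently, $U=\emptyset$ and the third summand forced to vanish). Where you genuinely depart from the paper is in step (i): instead of inheriting the six-case analysis of Proposition~\ref{prop:correct}, you exhibit the single closed-form certificate $\mu_i=x_i-y_i/y(N)$, which is feasible for every integer $x$ (including $x(R)\geq 1$, the situation that forces Cases 4--6 in the general proof) and collapses the objective to $y(N)^2$ by the Cauchy--Schwarz/Jensen identity for sums of ratios. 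This is a real simplification, made possible because with $N^-=\emptyset$ there are no $\lambda,\lambda_0,\zeta$ variables whose interaction with $y(N^-)$ drives the case split; it buys a short, uniform, self-contained proof of validity, whereas the paper's route buys nothing extra here beyond reuse of the already-proved general machinery. One small repair: in the degenerate sub-case $y(N)=0$ you cannot take $\mu=0$, since constraint \eqref{eq:extendedPos_nonNeg} reads $1-x(R)\geq 0$ and fails when $x(R)\geq 2$; take $\mu_i=x_i$ instead (which is where your general formula degenerates under the convention $0/0=0$), giving $1-x(R)+\mu(R)=1$ and a zero objective. With that fix, both containments go through as you describe.
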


\ignore{

		\small\begin{subequations}\label{eq:extendedPosIntroPlus}
		\begin{align}t\geq \min_{\mu}\;&\frac{\left(\sum\limits_{i\in L}y_i\right)^2}{1-\sum\limits_{i\in N\setminus L}(x_i-\mu_i)}+\sum\limits_{i\in N\setminus L}\frac{y_i^2}{x_i-\mu_i}\\
			\text{s.t.}\;&1-\sum\limits_{i\in N\setminus L}(x_i-\mu_i)\geq 0\\
			&\mu_i\leq x_i \quad \quad\forall i\in N\setminus L\\
			&\mu\in \R_+^{N\setminus L}
		\end{align}
	\end{subequations}\normalsize
	
	\todo{Need closure ?\\
		AG: Yes. We are implicitly working here with the extended real line, and have points where $x_i=0$, $y_i>0$ and $t=\infty$ that are not convex combinations of points in $X_+$. In this case, it is quite possible that if we remove those points we actually get the convex hull, but that would require a new proof.}

Note that $X_+$ can be equivalently written as 
\begin{align*}X_+=\Big\{(x,y,t)&\in \{0,1\}^{n+2}\times \R_+^{n+2}\times \R_+: \left(\sum_{i=1}^ny_i+y_{n+1}-y_{n+2}\right)^2\leq t,\\
& y_{n+1}=y_{n+2}=x_{n+1}=x_{n+2}=0,\; y_i(1-x_i)=0 \ \forall i\in \{1,\ldots,n+2\}\Big\}.\end{align*}
Then inequalities \eqref{eq:extendedGen} with $N^+=\left\{1,\ldots,n+1\right\}$, $N^-=\{n+2\}$ and $U=\{n+1\}$ reduce to \eqref{eq:extendedPos} since $x_{n+1}=0$ forces $\mu_0=\lambda_0=\zeta=\lambda(S)=0$. Therefore, the validity of \eqref{eq:extendedPos} follows directly from Proposition~\ref{prop:correct}. It can also be shown that inequalities \eqref{eq:extendedPos} can be used to represent the strong formulations given in Proposition~\ref{prop:validPos}; the proof is identical to the proof of Proposition~\ref{prop:equivalence} and is omitted for brevity. 
}

\section{Computational experiments}\label{sec:computations}

In this section, we test the computational effectiveness of the conic quadratic inequalities given in \S\ref{sec:extendedSpace} in solving convex quadratic minimization problems with indicators. 
In particular, we solve portfolio optimization problems with fixed-charges.
All experiments are run with CPLEX 12.8 solver on a laptop with a 1.80GHz Intel\textregistered Core\textsuperscript{TM} i7 CPU and 16 GB main memory on a single thread. We use CPLEX default settings but turn on the numerical emphasis parameter, unless stated otherwise. The data for the instances and problem formulations in \texttt{.lp} format can be found online at 
{\small \url{https://sites.google.com/usc.edu/gomez/data}.}

\ignore{

First, in \S\ref{sec:comp_unconstr}, we test the formulations in optimization problems over $X_g$ and $X_p$, in order to evaluate the strengthening obtained without of the effect of additional constraints not accounted for in the derivation of the inequalities. Then, in \S\ref{sec:comp_portfolio}, we test the strength of the formulations in portfolio optimization instances. 

Next, in \S\ref{sec:branchBound}, we briefly comment on the use of the new formulations in a branch-and-bound algorithm. Finally, in \S\ref{sec:future}, we give some pointers for future work. 

\subsection{Unconstrained problems}\label{sec:comp_unconstr}
In this section, we report on computational results evaluating the strength of the formulations for unconstrained problem instances.

\subsubsection{Instances}\label{sec:unconstr_instances}We consider optimization problems of the form 
\begin{subequations}\label{eq:unconstrained}
	\begin{alignat}{1}
	\min_{x,y}\;& a'x-b'y+y'(FF')y+ \sum_{i=1}^n(d_iy_i)^2\label{eq:unconstrained_obj}\\
	\text{s.t.}\;&y_i(1-x_i)=0, \ \   i\in N\label{eq:unconstrained_compl}\\
	&x\in \{0,1\}^N, y\in\R_+^N
	\end{alignat}
\end{subequations}
where $F\in \R_+^{n\times r}$ with $r<n$ and $e$ is an $n$-dimensional vector of ones, $a,b,d\in \R_+^N$. We test two classes of instances, \texttt{general} and \texttt{positive}, where either $F$ has both positive and negative entries, or $F$ has only non-negative entries, respectively. The parameters are generated as follows -- we use the notation $Y\sim U[\ell,u]$ as ``$Y$ is generated from a continuous uniform distribution between $\ell$ and $u$":
\begin{description} 
	\item[$F$] Matrix $F=EG$ where $E\in \R_+^{n\times r}$ is an exposure matrix such that $E_{ij}=0$ with probability $0.8$ and $E_{ij}\sim U[0,1]$ otherwise, and $G\in \R_+^{r\times r}$ such that: $G_{ij}\sim U[-1,1]$ for \texttt{general} instances, and $G_{ij}\sim U[0,1]$ for \texttt{positive} instances. 
	\item[$d$] Let $\delta$ be a diagonal dominance parameter. Define $v=(1/n)\sum_{i=1}^n (FF')_{ii}$ to be the average diagonal element of $FF'$; then $d_i^2\sim U[0,v\delta]$. 
	\item[$b$] We generate entries $b_i\sim U[0.25,0.75]\times \sqrt{(FF')_{ii}+d_i^2}$. Note that if the terms $b_i$ and $((FF')_{ii}+d_i^2)$ are interpreted as the expectation and variance of a random variable,
	then expectations are approximately proportional to the standard deviations. This relation aims to avoid trivial instances, where one term dominates the other.
	\item[$a$] {Let $\alpha$ be a fixed cost parameter} and  $a_i=\alpha(e'b)/n^2$, $i\in N$, where $e$ is an $n$-dimensional vector of ones.
\end{description} 
 It is well-documented in the literature that for matrices with large diagonal dominance the perspective reformulation achieves close to $100\%$ gap improvement. Therefore, we choose a low diagonal dominance $\delta=0.01$ to generate instances hard for the perspective reformulation. In our computations we use $n=200$.

\subsubsection{Methods}\label{sec:unconstr_methods}

We test the following methods: \todo{Objective term $a'x$ is missing in the formulations below.}
\begin{description}
	\item[Basic] Problem \eqref{eq:unconstrained} formulated as 
	\begin{subequations}\label{eq:form_basic}
	\begin{align}
	\min_{y,q}\;& \|q\|_2^2+\sum_{i=1}^n(d_iy_i)^2\\
	\text{s.t.}\;&q=F'y\\
	&y\in \R_+^n,\; q\in \R^r
	\end{align}
	\end{subequations}
	Observe that the complementary constraints \eqref{eq:unconstrained_compl} are dropped from the formulation. Nonetheless, the continuous relaxation of \eqref{eq:form_basic} is the same as the formulation where big-$M$ constraints $y\leq Mx$ are added and $M\to\infty$. \todo{Need big-$M$ with $x$ variables in the formulation.}
	
	\item[Perspective] Problem \eqref{eq:unconstrained} formulated as 
	\begin{subequations}\label{eq:form_perspective}
	\begin{alignat}{2}
	\min_{x,y,p,q}\;& \|q\|_2^2+\sum_{i=1}^nd_i^2p_i\\
	\text{s.t.}\;&q=F'y\\
	&y_i^2\leq p_ix_i, &&i=1,\ldots,n\label{eq:form_perspective_rotated}\\
	&x\in \{0,1\}^n,\; y\in \R_+^n,\;&& p\in \R_+^n,\; q\in \R^r
	\end{alignat}
	\end{subequations}
	The complementary constraints \eqref{eq:unconstrained_compl} are enforced via the rotated cone constraints \eqref{eq:form_perspective_rotated} (since $d>0$). 
	\todo{I don't think we need to refer to $d > 0$}
	
	\item[Supermodular] Problem \eqref{eq:unconstrained} formulated as 
	\begin{subequations}\label{eq:form_supermodular}
	\begin{alignat}{2}
	\min_{x,y,p,t}\;& \sum_{j=1}^r t_j+\sum_{i=1}^nd_i^2p_i\\
	\text{s.t.}\;
	& \left(F_j'y\right)^2\leq t_j, &&j=1,\ldots,r\label{eq:form_supermodular_rankone}\\
	&y_i^2\leq p_ix_i, &&i=1,\ldots,n\label{eq:form_supermodular_rotated}\\
	&x\in \{0,1\}^n,\; y\in \R_+^n,\; && p\in \R_+^n,\; t\in \R_+^r,
	\end{alignat}
	\end{subequations}
	where $F_j$ denotes the $j$-th column of $F$. Additionally, lifted supermodular inequalities \eqref{eq:extendedGen} are added to strengthen the relaxations. Note that the convex relaxation of \eqref{eq:form_supermodular} without any additional inequalities is equivalent to the convex relaxation of \eqref{eq:form_perspective}. 
\end{description}

Cuts \eqref{eq:extendedGen} or \eqref{eq:extendedPos} for method \textbf{Supermodular} are added as follows:
\begin{enumerate}
	\item We solve the convex relaxation of \eqref{eq:form_supermodular} to obtain a solution $(\bar x, \bar y, \bar t)$. By default, the convex relaxation is solved with an interior point method.
	\item We find a most violated inequality \eqref{eq:extendedGen} or \eqref{eq:extendedPos} for each constraint \eqref{eq:form_supermodular_rankone} using the separation algorithm given in \S\ref{sec:separation}. Denote by $\bar\nu_j$ the rhs value of \eqref{eq:originalSpaceGen} or \eqref{eq:originalSpacePos} if sets $L$ and $U$ satisfying \eqref{eq:conditions} exist; otherwise, let $\bar\nu=-\infty$. 
	\item Let $\epsilon=10^{-3}$ be a precision parameter. Inequalities found in step (2) are added if either $\bar t_j<\epsilon$ and $(\bar \nu_j-\bar t_j)>\epsilon$; or $\bar t_j\geq \epsilon$ and $(\bar \nu_j-\bar t_j)/\bar t_j>\epsilon$. At most $r$ inequalities are added per iteration, one for each constraint \eqref{eq:form_supermodular_rankone}.
	\item This process is repeated until either no inequality is added in step (3) or max number of cuts ($3r$) is reached. 
\end{enumerate}

\subsubsection{Results}\label{sec:unconstr_results}

Tables~\ref{tab:unconstrainedGen01} and \ref{tab:unconstrainedPos01} present the results 
for \texttt{general} and \texttt{positive} instances, respectively, for $n=200$ and $\delta=0.01$.
They show, for different ranks $r$ and values of the fixed cost parameter $\alpha$, the optimal objective value (\texttt{opt}) and, for each method, the optimal objective value for the convex relaxation (\texttt{val}), the integrality relaxation gap (\texttt{gap}) computed as $\texttt{gap}=\frac{\texttt{opt}-\texttt{val}}{|\texttt{opt}|}\times 100$, the time required to solve the relaxation in seconds (\texttt{time}) and the number of cuts added (\texttt{cuts}). The optimal solutions are computed using CPLEX branch-and-bound method. The values \texttt{opt} and \texttt{val} are scaled as follows: if $\texttt{opt}\neq 0$ in a given instance, then the values are scaled so that $|\texttt{opt}|=100$; otherwise, if $\texttt{opt}= 0$, then the values are scaled so that the optimal objective value of the convex relaxation of \textbf{Basic} is $-100$ (these instances are indicated with the symbol $\dagger$). Each row corresponds to the average of five instances generated with the same parameters. 

\begin{table}[!h]
	\setlength{\tabcolsep}{4pt}
	\caption{Computational results for unconstrained problems with \texttt{general} instances.}
	\label{tab:unconstrainedGen01}	\small
	\begin{tabular}{c c c c|c c| c c }
		\hline \hline
		\multirow{2}{*}{$r$}&\multirow{2}{*}{$\alpha$}&\multirow{2}{*}{\texttt{opt}}&\multirow{2}{*}{\texttt{method}}&\multicolumn{2}{c|}{\texttt{strength}}&
		\multicolumn{2}{c}{\texttt{performance}}\\
		&&&&\texttt{val}&\texttt{gap(\%)}&\texttt{time(s)}&\texttt{cuts}\\	
		\hline
		\multirow{11}{*}{1}&\multirow{3}{*}{2}&\multirow{3}{*}{-100.0}&Basic&-100.4& 0.4 & $<$0.1 & - \\
		&&&Perspective&-100.0& 0.0 & $<$0.1 & - \\
		&&&Supermodular&-100.0& 0.0 &0.1 & 1 \\
		&&&&&&&\\
		&\multirow{3}{*}{10}&\multirow{3}{*}{-100.0}&Basic&-101.9& 1.9 & $<$0.1 & - \\
		&&&Perspective&-100.0& 0.0 & $<$0.1 & - \\
		&&&Supermodular&-100.0& 0.0 &0.1 & 1 \\
		&&&&&&&\\
			&\multirow{3}{*}{50}&\multirow{3}{*}{-100.0}&Basic&-110.3& 10.3 & $<$0.1 & - \\
		&&&Perspective&-100.0& 0.0 & $<$0.1 & - \\
		&&&Supermodular&-100.0& 0.0 &0.1 & 1 \\
		&&&&&&&\\
		\hline
		
		\multirow{11}{*}{5}&\multirow{3}{*}{2}&\multirow{3}{*}{-100.0}&Basic&-103.6& 3.6 & $<$0.1 & - \\
		&&&Perspective&-100.0& 0.0 & $<$0.1 & - \\
		&&&Supermodular&-100.0& 0.0 &$<$0.1 & $<$1 \\
		&&&&&&&\\
		&\multirow{3}{*}{10}&\multirow{3}{*}{-100.0}&Basic&-120.3& 20.3 & $<$0.1 & - \\
		&&&Perspective&-100.4& 0.4 & $<$0.1 & - \\
		&&&Supermodular&-100.3& 0.3 &0.1 & 3 \\
		&&&&&&&\\
		&\multirow{3}{*}{50}&\multirow{3}{*}{-100.0}&Basic&-296.1& 196.1 & $<$0.1 & - \\
		&&&Perspective&-107.8& 7.8 & $<$0.1 & - \\
		&&&Supermodular&-104.2& 4.2 &0.3 & 5 \\
		&&&&&&&\\
		\hline
		
			\multirow{11}{*}{10}&\multirow{3}{*}{2}&\multirow{3}{*}{-100.0}&Basic&-105.2& 5.2 & $<$0.1 & - \\
		&&&Perspective&-100.1& 0.1 & $<$0.1 & - \\
		&&&Supermodular&-100.1& 0.1 &$<$0.1 & $<$1 \\
		&&&&&&&\\
		&\multirow{3}{*}{10}&\multirow{3}{*}{-100.0}&Basic&-129.7& 29.7 & $<$0.1 & - \\
		&&&Perspective&-101.8& 1.8 & $<$0.1 & - \\
		&&&Supermodular&-101.6& 1.6 &0.1 & 2 \\
		&&&&&&&\\
		&\multirow{3}{*}{50}&\multirow{3}{*}{-100.0}&Basic&-446.2& 342.2 & $<$0.1 & - \\
		&&&Perspective&-162.4& 62.4 & $<$0.1 & - \\
		&&&Supermodular&-145.1& 45.1 &0.3 & 7 \\
		&&&&&&&\\
		\hline \hline
	\end{tabular}
\end{table}

\begin{table}[!h]
	\setlength{\tabcolsep}{4pt}
	\caption{Computational results for unconstrained problems with \texttt{positive} instances.}
	\label{tab:unconstrainedPos01}	
	\small
	\begin{tabular}{c c c c|c c| c c }
		\hline
		\multirow{2}{*}{$r$}&\multirow{2}{*}{$\alpha$}&\multirow{2}{*}{\texttt{opt}}&\multirow{2}{*}{\texttt{method}}&\multicolumn{2}{c|}{\texttt{strength}}&
		\multicolumn{2}{c}{\texttt{performance}}\\
		&&&&\texttt{val}&\texttt{gap(\%)}&\texttt{time(s)}&\texttt{cuts}\\	
		\hline
		\multirow{11}{*}{1}&\multirow{3}{*}{2}&\multirow{3}{*}{-100.0}&Basic&-100.6& 0.6 & $<$0.1 & - \\
		&&&Perspective&-100.0& 0.0 & $<$0.1 & - \\
		&&&Supermodular&-100.0& 0.0 &$<$0.1 & 1 \\
		&&&&&&&\\
		&\multirow{3}{*}{10}&\multirow{3}{*}{-100.0}&Basic&-103.1& 3.1 & $<$0.1 & - \\
		&&&Perspective&-100.0& 0.0 & $<$0.1 & - \\
		&&&Supermodular&-100.0& 0.0 &0.1 & 1 \\
		&&&&&&&\\
		&\multirow{3}{*}{50}&\multirow{3}{*}{-100.0}&Basic&-117.9& 17.9 & $<$0.1 & - \\
		&&&Perspective&-100.1& 0.1 & $<$0.1 & - \\
		&&&Supermodular&-100.0& 0.0 &0.1 & 1 \\
		&&&&&&&\\
		\hline
		
		\multirow{11}{*}{5}&\multirow{3}{*}{2}&\multirow{3}{*}{-100.0}&Basic&-105.4& 5.4 & $<$0.1 & - \\
		&&&Perspective&-100.1& 0.1 & $<$0.1 & - \\
		&&&Supermodular&-100.0& 0.0 &0.1 & 3 \\
		&&&&&&&\\
		&\multirow{3}{*}{10}&\multirow{3}{*}{-100.0}&Basic&-134.1& 34.1 & $<$0.1 & - \\
		&&&Perspective&-100.6& 0.6 & $<$0.1 & - \\
		&&&Supermodular&-100.2& 0.3 &0.1 & 5 \\
		&&&&&&&\\
		&\multirow{3}{*}{50}&\multirow{3}{*}{-100.0}&Basic&-742.1& 642.1 & $<$0.1 & - \\
		&&&Perspective&-117.0& 17.0 & $<$0.1 & - \\
		&&&Supermodular&-100.5& 0.5 &0.3 & 5 \\
		&&&&&&&\\
		\hline
		
		\multirow{11}{*}{10}&\multirow{3}{*}{2}&\multirow{3}{*}{-100.0}&Basic&-114.5& 14.5 & $<$0.1 & - \\
		&&&Perspective&-100.4& 0.4 & $<$0.1 & - \\
		&&&Supermodular&-100.3& 0.3 &$<$0.1 & 9 \\
		&&&&&&&\\
		&\multirow{3}{*}{10}&\multirow{3}{*}{-100.0}&Basic&-218.7& 118.7 & $<$0.1 & - \\
		&&&Perspective&-105.8& 5.8 & $<$0.1 & - \\
		&&&Supermodular&-101.0& 1.0 &0.6 & 10 \\
		&&&&&&&\\
		&\multirow{3}{*}{50$^\dagger$}&\multirow{3}{*}{0.0}&Basic&-100.0& - & $<$0.1 & - \\
		&&&Perspective&-8.8& - & $<$0.1 & - \\
		&&&Supermodular&$>$-0.1& - &0.3 & 10 \\
		&&&&&&&\\
		\hline
	\end{tabular}
\end{table}

\todo{The instances appear to be too simple, most with 0\% gap even trivial for the basic formulation. I think we want to show rank-one cases with large gap for perspective, but 0\% with supermodular (as expected)}
We see that in instances (both \texttt{general} and \texttt{positive}) with $r=1$ the gaps corresponding to all methods are in general low, the convex relaxations of \textbf{Supermodular} are tight with $0\%$ gap, and \textbf{Perspective} is also able to achieve gaps close to $0\%$. As the rank of $F$ increases, the gap of \textbf{Basic} increases and the simple use of  \textbf{Perspective} achieves a considerable improvement: indeed, the complementary constraints \eqref{eq:unconstrained_compl} are actually \emph{enforced} while using the perspective reformulation, and completely discarded with \textbf{Basic} (which essentially ignores the fixed costs). The additional gap improvement achieved by \textbf{Supermodular} differs depending on the type of instance: in \texttt{general} instances the additional improvement is modest, and the gap of \textbf{Supermodular} is roughly 2/3 the gap of \textbf{Perspective}; in contrast, in \texttt{positive} instances \textbf{Supermodular} achieves a substantial improvement over \textbf{Perspective}, often reducing the gaps by a factor of five or more, especially in instances with large fixed costs ($\alpha=50$). 
}

\subsection{Instances}We consider optimization problems of the form 
\begin{subequations}\label{eq:portfolio}
\begin{align}
\min_{y,x}\;& y'(FF')y+\sum_{i=1}^n(d_iy_i)^2\\
\text{s.t.}\;&e'y=1\label{eq:portfolio_budget}\\
&b'y-a'x\geq \beta\\
&y_i\leq x_i, \ \ i\in N\label{eq:portfolio_compl}\\
&x\in \{0,1\}^N, y\in \R_+^N
\end{align}
\end{subequations}
where $F\in \R_+^{n\times r}$ with $r<n$, $a,b,d\in \R_+^N$. 
We test two classes of instances, \texttt{general} and \texttt{positive}, where either $F$ has both positive and negative entries, or $F$ has only non-negative entries, respectively. Note that constraints \eqref{eq:portfolio_compl} are in fact a big-M reformulation of complementary constraint $y_i(1-x_i)=0$: indeed, constraint \eqref{eq:portfolio_budget} and $y\geq 0$ imply the upper bound $y\leq 1$. The parameters are generated as follows -- we use the notation $Y\sim U[\ell,u]$ as ``$Y$ is generated from a continuous uniform distribution between $\ell$ and $u$":
\begin{description} 
	\item[$F$] Let $\rho$ be a positive weight parameter. Matrix $F=EG$ where $E\in \R_+^{n\times r}$ is an exposure matrix such that $E_{ij}=0$ with probability $0.8$ and $E_{ij}\sim U[0,1]$ otherwise, and $G\in \R_+^{r\times r}$ such that: $G_{ij}\sim U[\rho,1]$. If $\rho\geq 0$, then matrix $F$ is guaranteed to be positive, and we refer to such instances as \texttt{positive}. Otherwise, for $\rho<0$, we refer to the instances as \texttt{general}.  
	\item[$d$] Let $\delta$ be a diagonal dominance parameter. Define $v=(1/n)\sum_{i=1}^n (FF')_{ii}$ to be the average diagonal element of $FF'$; then $d_i^2\sim U[0,v\delta]$. 
	\item[$b$] We generate entries $b_i\sim U[0.25,0.75]\times \sqrt{(FF')_{ii}+d_i^2}$. Note that if the terms $b_i$ and $((FF')_{ii}+d_i^2)$ are interpreted as the expectation and variance of a random variable,
	then expectations are approximately proportional to the standard deviations. This relation aims to avoid trivial instances, where one term dominates the other.
	\item[$a$] {Let $\alpha$ be a fixed cost parameter} and  $a_i=\alpha(e'b)/n$, $i\in N$, where $e$ is an $n$-dimensional vector of ones.
\end{description} 
It is well-documented in the literature that for matrices with large diagonal dominance the perspective reformulation achieves close to $100\%$ gap improvement. Therefore, we choose a low diagonal dominance $\delta=0.01$ to generate instances hard for the perspective reformulation. In our computations, unless stated otherwise, we use $n=200$ and $\beta=(e'b)/n$.

\newcommand{\Basic}{\texttt{Basic }}
\newcommand{\Perspective}{\texttt{Perspective }}
\newcommand{\Supermodular}{\texttt{Supermodular }}

\subsection{Methods} 
We test the following methods:
\begin{itemize}
	\item \Basic: Problem \eqref{eq:portfolio} formulated as 
	\begin{subequations}\label{eq:form_basic}
		\begin{align}
		\min\;& \|q\|_2^2+\sum_{i=1}^n(d_iy_i)^2\\
		\text{s.t.}\;&q=F'y\\
		&\eqref{eq:portfolio_budget}-\eqref{eq:portfolio_compl}\\
		&x\in \{0,1\}^n,\; y\in \R_+^n,\; q\in \R^r.
		\end{align}
	\end{subequations}
	
	\item \Perspective:  Problem \eqref{eq:portfolio} formulated as 
	\begin{subequations}\label{eq:form_perspective}
		\begin{alignat}{2}
		\min \;& \|q\|_2^2+\sum_{i=1}^nd_i^2p_i\\
		\text{s.t.}\;&q=F'y\\
		&y_i^2\leq p_ix_i, &&i=1,\ldots,n\label{eq:form_perspective_rotated}\\
		&\eqref{eq:portfolio_budget}-\eqref{eq:portfolio_compl}\\
		&x\in \{0,1\}^n,\; y\in \R_+^n,\;&& p\in \R_+^n,\; q\in \R^r.
		\end{alignat}
	\end{subequations}

	\item \Supermodular:  Problem \eqref{eq:portfolio} formulated as 
	\begin{subequations}\label{eq:form_supermodular}
		\begin{alignat}{2}
		\min \;& \sum_{j=1}^r t_j+\sum_{i=1}^nd_i^2p_i\\
		\text{s.t.}\;
		& \left(F_j'y\right)^2\leq t_j, &&j=1,\ldots,r\label{eq:form_supermodular_rankone}\\
		&y_i^2\leq p_ix_i, &&i=1,\ldots,n\label{eq:form_supermodular_rotated}\\
		&\eqref{eq:portfolio_budget}-\eqref{eq:portfolio_compl}\\
		&x\in \{0,1\}^n,\; y\in \R_+^n,\; && t\in \R_+^r,
		\end{alignat}
	\end{subequations}
	where $F_j$ denotes the $j$-th column of $F$. Additionally, lifted supermodular inequalities \eqref{eq:extendedGen} are added to strengthen the relaxations. Note that the convex relaxation of \eqref{eq:form_supermodular} without any additional inequalities is equivalent to the convex relaxation of \eqref{eq:form_perspective}. 
\end{itemize}

Cuts \eqref{eq:extendedGen} (for \texttt{general} instances) or \eqref{eq:extendedPos} (for \texttt{positive} instances) for method \Supermodular  are added as follows:
\begin{enumerate}
	\item We solve the convex relaxation of \eqref{eq:form_supermodular} to obtain a solution $(\bar x, \bar y, \bar t)$. By default, the convex relaxation is solved with an interior point method.
	\item We find a most violated inequality \eqref{eq:extendedGen} or \eqref{eq:extendedPos} for each constraint \eqref{eq:form_supermodular_rankone} using the separation algorithm given in \S\ref{sec:separation}. Denote by $\bar\nu_j$ the rhs value of \eqref{eq:originalSpaceGen} or \eqref{eq:originalSpacePos} if sets $L$ and $U$ satisfying \eqref{eq:conditions} exist; otherwise, let $\bar\nu=-\infty$. 
	\item Let $\epsilon=10^{-3}$ be a precision parameter. Inequalities found in step (2) are added if either $\bar t_j<\epsilon$ and $(\bar \nu_j-\bar t_j)>\epsilon$; or $\bar t_j\geq \epsilon$ and $(\bar \nu_j-\bar t_j)/\bar t_j>\epsilon$. At most $r$ inequalities are added per iteration, one for each constraint \eqref{eq:form_supermodular_rankone}.
	\item This process is repeated until either no inequality is added in step (3) or max number of cuts ($3r$) is reached. 
\end{enumerate}

We point out that convexification based on $X_f$ \cite{atamturk2019rank}, described in Proposition~\ref{prop:free}, is not effective with formulation \eqref{eq:form_supermodular} since  $t_j\geq (F_j'y)^2/\min\{1,e'x\}$ reduces to $t_j\geq (F_j'y)^2$
due to \eqref{eq:portfolio_budget} and \eqref{eq:portfolio_compl}.

\subsection{Results} 
Tables~\ref{tab:portfolioGen01}--\ref{tab:portfolioPos01} 
present the results for $\rho=\{-1,-0.5,-0.2,0\}$. They show, for different ranks $r$ and values of the fixed cost parameter $\alpha$, the optimal objective value (\texttt{opt}) and, for each method, the optimal objective value for the convex relaxation (\texttt{val}), the integrality gap (\texttt{gap}) computed as $\texttt{gap}=\frac{\texttt{opt}-\texttt{val}}{\texttt{opt}}\times 100$, the improvement (\texttt{imp}) of \Supermodular over \Perspective computed as 
$$
\texttt{imp}=\frac{\texttt{gap}_{\texttt{Persp.}}-\texttt{gap}_\texttt{Supermod.}}{\texttt{gap}_{\texttt{Persp.}}},
$$ 
the time required to solve the relaxation in seconds (\texttt{time}) and the number of cuts added (\texttt{cuts}). The optimal solutions are computed using CPLEX branch-and-bound method. The values \texttt{opt} and \texttt{val} are scaled so that, in a given instance, $\texttt{opt}=100$. Each row corresponds to the average of five instances generated with the same parameters. 

\newcommand{\rowspace}{1.2em}
\begin{table}[!h]
	\setlength{\tabcolsep}{4pt}
	\small
	\caption{Computational results for  \texttt{general} instances, $\rho=-1$.}
	\label{tab:portfolioGen01}	\small
	\begin{tabular}{c c c c|c c c | c c }
		\hline
		\multirow{2}{*}{$r$}&\multirow{2}{*}{$\alpha$}&\multirow{2}{*}{\texttt{opt}}&\multirow{2}{*}{\texttt{method}}&\multicolumn{3}{c|}{\texttt{strength}}&
		\multicolumn{2}{c}{\texttt{performance}}\\
		&&&&\texttt{val}&\texttt{gap(\%)}&\texttt{imp(\%)}&\texttt{time(s)}&\texttt{cuts}\\	
		\hline \hline
		\multirow{9.6}{*}{1}&\multirow{3}{*}{2 }&\multirow{3}{*}{100.0}&\Basic&92.5& 7.5& & $<$0.1 & - \rule{0mm}{\rowspace}\\
		&&&\Perspective&98.4& 1.6 && $<$0.1 & - \\
		&&&\Supermodular&100.0& 0.0&100.0 &0.1 & 1 \\ 
		&\multirow{3}{*}{10}&\multirow{3}{*}{100.0}&\Basic&82.9& 17.1 && $<$0.1 & -  \rule{0mm}{\rowspace} \\
		&&&\Perspective&90.9& 9.1 && $<$0.1 & - \\
		&&&\Supermodular&100.0& 0.0&100.0 &0.1 & 1 \\
		&\multirow{3}{*}{50}&\multirow{3}{*}{100.0}&\Basic&61.7& 38.3 && $<$0.1 & - \rule{0mm}{\rowspace} \\
		&&&\Perspective&65.4& 34.6 && $<$0.1 & - \\
		&&&\Supermodular&94.3& 5.7 &83.5&0.1 & 1 \\
		\hline
		
		\multirow{9.6}{*}{5}&\multirow{3}{*}{2}&\multirow{3}{*}{100.0}&\Basic&88.3& 11.7 && $<$0.1 & - \rule{0mm}{\rowspace} \\
		&&&\Perspective&96.5& 3.5 && $<$0.1 & - \\
		&&&\Supermodular&97.7& 2.3 &34.3&0.1 & 3 \\
		&\multirow{3}{*}{10}&\multirow{3}{*}{100.0}&\Basic&69.7& 30.3 && $<$0.1 & - \rule{0mm}{\rowspace}\\
		&&&\Perspective&80.5& 19.5 && $<$0.1 & - \\
		&&&\Supermodular&88.5& 11.5 &41.0&0.3 & 4 \\
		&\multirow{3}{*}{50}&\multirow{3}{*}{100.0}&\Basic&41.8& 58.2 && $<$0.1 & - \rule{0mm}{\rowspace}\\
		&&&\Perspective&46.6& 53.4 && $<$0.1 & - \\
		&&&\Supermodular&68.1& 31.9 &40.3&0.6 & 5 \\
		\hline
		
		\multirow{9.6}{*}{10}&\multirow{3}{*}{2}&\multirow{3}{*}{100.0}&\Basic&87.1& 12.9 && $<$0.1 & - \rule{0mm}{\rowspace}\\
		&&&\Perspective&95.6& 4.4 && $<$0.1 & - \\
		&&&\Supermodular&95.8& 4.2&4.5 &0.1 & 2 \\
		&\multirow{3}{*}{10}&\multirow{3}{*}{100.0}&\Basic&62.0& 38.0 && $<$0.1 & - \rule{0mm}{\rowspace} \\
		&&&\Perspective&72.9& 27.1 && $<$0.1 & - \\
		&&&\Supermodular&76.1& 23.9 &11.8&0.7 & 7 \\
		&\multirow{3}{*}{50}&\multirow{3}{*}{100.0}&\Basic&27.4& 72.6 && $<$0.1 & - \rule{0mm}{\rowspace} \\
		&&&\Perspective&30.9& 69.1 && $<$0.1 & - \\
		&&&\Supermodular&40.4& 59.6 &13.7&1.0 & 12 \\
		\hline \hline
	\end{tabular}
\end{table}

\begin{table}[!h]
	\setlength{\tabcolsep}{4pt}
		\small
	\caption{Computational results for \texttt{general} instances, $\rho=-0.5$.}
	\label{tab:portfolioGen005}	\small
	\begin{tabular}{c c c c|c c c| c c }
		\hline
		\multirow{2}{*}{$r$}&\multirow{2}{*}{$\alpha$}&\multirow{2}{*}{\texttt{opt}}&\multirow{2}{*}{\texttt{method}}&\multicolumn{3}{c|}{\texttt{strength}}&
		\multicolumn{2}{c}{\texttt{performance}}\\
		&&&&\texttt{val}&\texttt{gap(\%)}&\texttt{imp(\%)}&\texttt{time(s)}&\texttt{cuts}\\	
		\hline \hline
		\multirow{9.6}{*}{1}&\multirow{3}{*}{2}&\multirow{3}{*}{100.0}&\Basic&92.5& 7.5 && $<$0.1 & - \rule{0mm}{\rowspace} \\
		&&&\Perspective&98.3& 1.7 && $<$0.1 & - \\
		&&&\Supermodular&100.0& 0.0&100.0 &0.1 & 1 \\
		&\multirow{3}{*}{10}&\multirow{3}{*}{100.0}&\Basic&82.9& 17.1 && $<$0.1 & - \rule{0mm}{\rowspace}\\
		&&&\Perspective&91.0& 9.1 && $<$0.1 & - \\
		&&&\Supermodular&99.9& 0.1 &98.9&0.1 & 1 \\
		&\multirow{3}{*}{50}&\multirow{3}{*}{100.0}&\Basic&61.7& 38.3 && $<$0.1 & - \rule{0mm}{\rowspace} \\
		&&&\Perspective&65.3& 34.7 && $<$0.1 & - \\
		&&&\Supermodular&94.2& 5.8 &83.3&0.1 & 1 \\
		\hline
		
		\multirow{9.6}{*}{5}&\multirow{3}{*}{2}&\multirow{3}{*}{100.0}&\Basic&91.5& 8.5 && $<$0.1 & -  \rule{0mm}{\rowspace} \\
		&&&\Perspective&96.5& 3.5 && $<$0.1 & - \\
		&&&\Supermodular&98.1& 1.9&45.7 &0.3 & 4 \\
		&\multirow{3}{*}{10}&\multirow{3}{*}{100.0}&\Basic&76.4& 23.6 && $<$0.1 & -  \rule{0mm}{\rowspace} \\
		&&&\Perspective&83.1& 16.9 && 0.1 & - \\
		&&&\Supermodular&92.7& 7.3 &56.8&0.4 & 4 \\
		&\multirow{3}{*}{50}&\multirow{3}{*}{100.0}&\Basic&52.1& 47.9 && $<$0.1 & -  \rule{0mm}{\rowspace} \\
		&&&\Perspective&55.1& 44.9 && $<$0.1 & - \\
		&&&\Supermodular&79.0& 21.0&53.2 &0.4 & 4 \\
		\hline
		
		\multirow{9.6}{*}{10}&\multirow{3}{*}{2}&\multirow{3}{*}{100.0}&\Basic&89.7& 10.3 && $<$0.1 & -  \rule{0mm}{\rowspace}  \\
		&&&\Perspective&93.3& 6.7 &&0.1 & - \\
		&&&\Supermodular&94.7& 5.3 &20.9&1.6 & 10 \\
		&\multirow{3}{*}{10}&\multirow{3}{*}{100.0}&\Basic&69.5& 30.5 && $<$0.1 & -  \rule{0mm}{\rowspace} \\
		&&&\Perspective&73.2& 26.8 && $<$0.1 & - \\
		&&&\Supermodular&81.4& 18.6 &30.6&2.4 & 11 \\
		&\multirow{3}{*}{50}&\multirow{3}{*}{100.0}&\Basic&38.3& 61.7 && $<$0.1 & - \rule{0mm}{\rowspace}  \\
		&&&\Perspective&39.6& 60.4 && $<$0.1 & - \\
		&&&\Supermodular&54.5& 45.5 &24.7&2.2 & 16 \\
		\hline \hline
	\end{tabular}
\end{table}

\begin{table}[!h]
	\setlength{\tabcolsep}{4pt}
	\caption{Computational results for \texttt{general} instances, $\rho=-0.2$.}
	\label{tab:portfolioGen002}	\small
	\begin{tabular}{c c c c|c c c| c c }
		\hline
		\multirow{2}{*}{$r$}&\multirow{2}{*}{$\alpha$}&\multirow{2}{*}{\texttt{opt}}&\multirow{2}{*}{\texttt{method}}&\multicolumn{3}{c|}{\texttt{strength}}&
		\multicolumn{2}{c}{\texttt{performance}}\\
		&&&&\texttt{val}&\texttt{gap(\%)}&\texttt{imp(\%)}&\texttt{time(s)}&\texttt{cuts}\\	
		\hline \hline
		\multirow{9.6}{*}{1}&\multirow{3}{*}{2}&\multirow{3}{*}{100.0}&\Basic&92.5& 7.5 && $<$0.1 & -  \rule{0mm}{\rowspace}  \\
		&&&\Perspective&98.3& 1.7 && $<$0.1 & - \\
		&&&\Supermodular&100.0& 0.0&100.0 &0.1 & 1 \\
		&\multirow{3}{*}{10}&\multirow{3}{*}{100.0}&\Basic&82.9& 17.1 && $<$0.1 & -  \rule{0mm}{\rowspace}  \\
		&&&\Perspective&90.9& 9.1 && $<$0.1 & - \\
		&&&\Supermodular&99.9& 0.1&98.9 &0.1 & 1 \\
		&\multirow{3}{*}{50}&\multirow{3}{*}{100.0}&\Basic&61.7& 38.3 && $<$0.1 & -  \rule{0mm}{\rowspace}  \\
		&&&\Perspective&65.4& 34.6 && $<$0.1 & - \\
		&&&\Supermodular&94.2& 5.8 &83.2&0.1 & 1 \\
		\hline
		
		\multirow{9.6}{*}{5}&\multirow{3}{*}{2}&\multirow{3}{*}{100.0}&\Basic&93.8& 6.2 && $<$0.1 & -  \rule{0mm}{\rowspace}  \\
		&&&\Perspective&96.3& 3.7 && $<$0.1 & - \\
		&&&\Supermodular&98.9& 1.1 &70.3&0.6 & 5 \\
		&\multirow{3}{*}{10}&\multirow{3}{*}{100.0}&\Basic&79.6& 20.4 && $<$0.1 & -  \rule{0mm}{\rowspace}  \\
		&&&\Perspective&82.1& 17.9 && 0.1 & - \\
		&&&\Supermodular&93.8& 6.2 &65.4&0.8 & 6 \\
		&\multirow{3}{*}{50}&\multirow{3}{*}{100.0}&\Basic&57.6& 42.4 && $<$0.1 & -  \rule{0mm}{\rowspace}  \\
		&&&\Perspective&58.7& 41.3 && $<$0.1 & - \\
		&&&\Supermodular&84.3& 15.7 &62.0&0.6 & 5 \\
		\hline
		
		\multirow{9.6}{*}{10}&\multirow{3}{*}{2}&\multirow{3}{*}{100.0}&\Basic&93.1& 6.9 && $<$0.1 & -  \rule{0mm}{\rowspace}  \\
		&&&\Perspective&94.9& 5.1 &&$<$0.1 & - \\
		&&&\Supermodular&97.6& 2.4 &52.9&6.6 & 14 \\
		&\multirow{3}{*}{10}&\multirow{3}{*}{100.0}&\Basic&77.8& 22.2 && $<$0.1 & -  \rule{0mm}{\rowspace}  \\
		&&&\Perspective&79.3& 20.7 && $<$0.1 & - \\
		&&&\Supermodular&89.7& 10.3 &50.2&3.0 & 12 \\
		&\multirow{3}{*}{50}&\multirow{3}{*}{100.0}&\Basic&56.9& 43.1 && $<$0.1 & -  \rule{0mm}{\rowspace}  \\
		&&&\Perspective&57.5& 42.5 && $<$0.1 & - \\
		&&&\Supermodular&76.9& 23.1&45.6 &10.3 & 20 \\
		\hline \hline
	\end{tabular}
\end{table}

\begin{table}[!h]
	\setlength{\tabcolsep}{4pt}
	\caption{Computational results for \texttt{positive} instances, $\rho=0$.}
	\label{tab:portfolioPos01}	\small
	\begin{tabular}{c c c c|c c c| c c }
		\hline \hline
		\multirow{2}{*}{$r$}&\multirow{2}{*}{$\alpha$}&\multirow{2}{*}{\texttt{opt}}&\multirow{2}{*}{\texttt{method}}&\multicolumn{3}{c|}{\texttt{strength}}&
		\multicolumn{2}{c}{\texttt{performance}}\\
		&&&&\texttt{val}&\texttt{gap(\%)}&\texttt{imp(\%)}&\texttt{time(s)}&\texttt{cuts}\\	
		\hline
		\multirow{9.6}{*}{1}&\multirow{3}{*}{2}&\multirow{3}{*}{100.0}&\Basic&92.5& 7.5 && $<$0.1 & -  \rule{0mm}{\rowspace}   \\
		&&&\Perspective&98.3& 1.7 && $<$0.1 & - \\
		&&&\Supermodular&100.0& 0.0&100.0 &0.1 & 2 \\
		&\multirow{3}{*}{10}&\multirow{3}{*}{100.0}&\Basic&82.9& 17.1 && $<$0.1 & -  \rule{0mm}{\rowspace}   \\
		&&&\Perspective&91.0& 9.0 && $<$0.1 & - \\
		&&&\Supermodular&99.9& 0.1 &98.9&0.1 & 2 \\
		&\multirow{3}{*}{50}&\multirow{3}{*}{100.0}&\Basic&61.7& 38.3 && $<$0.1 & -  \rule{0mm}{\rowspace}   \\
		&&&\Perspective&65.3& 34.7 && $<$0.1 & - \\
		&&&\Supermodular&94.2& 5.8 &83.3&0.1 & 2 \\
		\hline
		
		\multirow{9.6}{*}{5}&\multirow{3}{*}{2}&\multirow{3}{*}{100.0}&\Basic&94.1& 5.9 && $<$0.1 & -  \rule{0mm}{\rowspace}   \\
		&&&\Perspective&96.2& 3.8 && $<$0.1 & - \\
		&&&\Supermodular&98.7& 1.3 &65.8&0.2 & 10 \\
		&\multirow{3}{*}{10}&\multirow{3}{*}{100.0}&\Basic&80.4& 19.6 && $<$0.1 & -  \rule{0mm}{\rowspace}   \\
		&&&\Perspective&82.4& 17.6 && $<$0.1 & - \\
		&&&\Supermodular&93.4& 6.6 &65.2&0.2 & 10 \\
		&\multirow{3}{*}{50}&\multirow{3}{*}{100.0}&\Basic&65.6& 34.4 && $<$0.1 & -  \rule{0mm}{\rowspace}   \\
		&&&\Perspective&66.7& 33.3 && $<$0.1 & - \\
		&&&\Supermodular&90.9& 9.1 &72.7&0.2 & 10 \\
		\hline
		
		\multirow{9.6}{*}{10}&\multirow{3}{*}{2}&\multirow{3}{*}{100.0}&\Basic&94.0& 6.0 && $<$0.1 & -  \rule{0mm}{\rowspace}   \\
		&&&\Perspective&95.5& 4.5 && $<$0.1 & - \\
		&&&\Supermodular&97.6& 2.4 &51.1&0.6 & 20 \\
		&\multirow{3}{*}{10}&\multirow{3}{*}{100.0}&\Basic&83.1& 16.9 && $<$0.1 & -  \rule{0mm}{\rowspace}   \\
		&&&\Perspective&84.4& 15.6 && $<$0.1 & - \\
		&&&\Supermodular&93.2& 6.8 &56.4&0.6 & 20 \\
		&\multirow{3}{*}{50}&\multirow{3}{*}{100.0}&\Basic&66.0& 34.0 && $<$0.1 & -  \rule{0mm}{\rowspace}   \\
		&&&\Perspective&66.7& 33.3 && $<$0.1 & - \\
		&&&\Supermodular&82.6& 17.4 &47.7&0.6 & 20 \\
		\hline \hline
	\end{tabular}
\end{table}

First note that \Perspective achieves only a very modest improvement over \Basic due to the low diagonal dominance parameter $\delta=0.01$. 
We also point out that instances with smaller positive weight $\rho$ have weaker natural convex relaxations, i.e., \Basic has larger gaps -- a similar phenomenon was observed in \cite{frangioni2018decompositions}. 

The relative performance of all methods in rank-one instances, $r=1$, is virtually identical regardless of the value of the positive weight parameter $\rho$. In particular \Supermodular substantially improves upon \Basic and \Perspective: it achieves $0\%$ gaps in instances with $\alpha\leq 10$, and reduces to gap from 35\% to 6\% in instances with $\alpha=50$.

In instances with $r\geq 5$, the relative performance of \Supermodular depends on the positive weight parameter $\rho$: for larger values of $\rho$, more cuts are added and \Supermodular results in higher quality formulations. For example, in instances with $r=5$, $\alpha=50$, the improvements achieved by \Supermodular are 40.3\% ($\rho=-1$), 53.2\% ($\rho=-0.5$), 62.0\% ($\rho=-0.2$) and 72.7\% ($\rho=0$). Similar behavior can be observed for other combinations of parameters with $r\geq 5$. 

Our interpretation of the dependence of $\rho$ in the strength of the formulation is as follows. For instances with small values of $\rho$, it is possible to reduce the systematic risk of the portfolio $y'(FF')y$ close to zero due to negative correlations, i.e., achieve ``perfect hedge" although it may be unrealistic in practice. In such instances, the idiosynctratic risk $\sum_{i=1}^n (d_iy_i)^2$ and constraints \eqref{eq:portfolio_budget}--\eqref{eq:portfolio_compl}, which limit diversification, are the most important components behind the portfolio variance. In contrast, as $\rho$ increases, it is increasingly difficult to reduce the systematic risk (and altogether impossible for $\rho\geq 0$). Thus, in such instances, the systematic risk $y'(FF')y$ accounts for the majority of the variance of the portfolio. Thus, the lifted supermodular inequalities, which exploit the structure induced by the systematic risk, are particularly effective in the later class of instances. 

Figure~\ref{fig:resultsGap} 
depicts the integrality gap of different formulations as a function of rank for instances with $\rho=0$. We see that \Supermodular achieves large ($> 70\%$) improvement over \Perspective especially in the challenging low-rank settings. The improvement is significant (44\%) also for high-rank settings with $r=35$.   

\begin{figure}[!h]		
	\includegraphics[width=0.85\textwidth,trim={10.8cm 5.8cm 10.8cm 5.8cm},clip]{./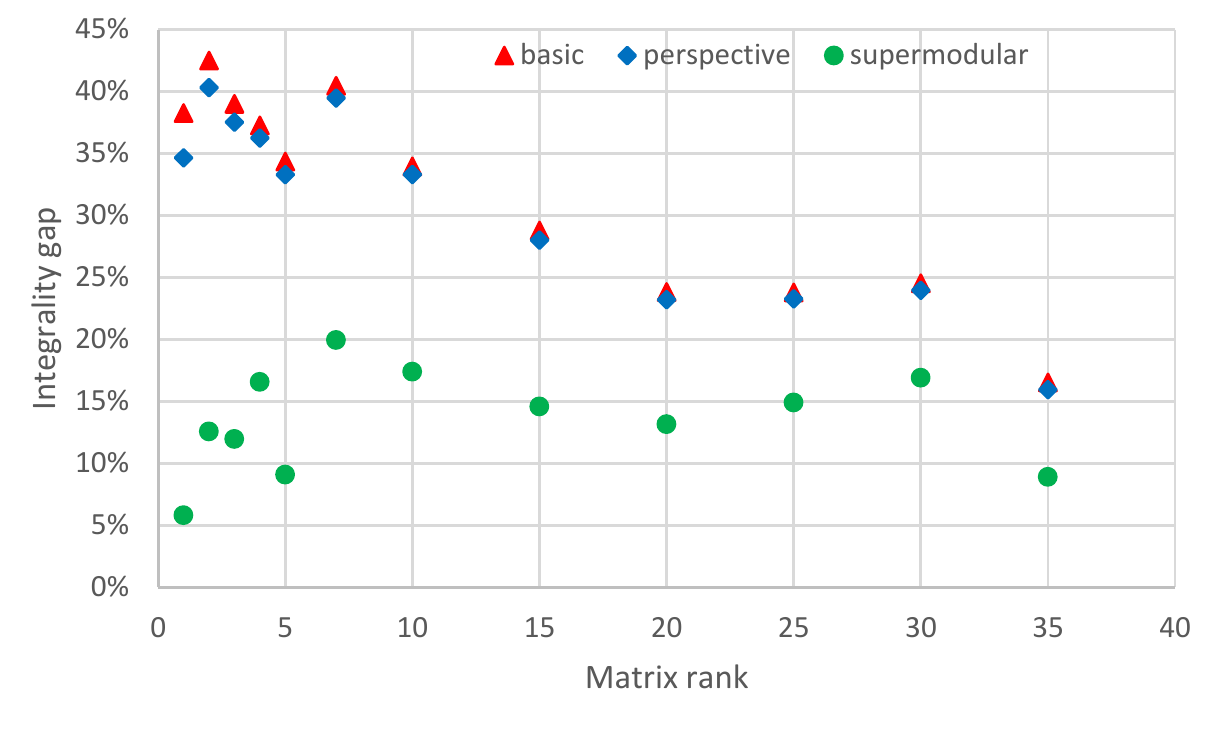}
	\caption{Integrality gap vs matrix rank ($\alpha=50$, $\delta=0.01$, $\rho=0$).}
	\label{fig:resultsGap}
\end{figure}

Finally, to evaluate the computational burden associated with the formulations, we plot in Figure~\ref{fig:resultsSize} the time in seconds (in a logarithmic scale) require the solve the convex relaxations of each method for different dimensions $n$. Each point in Figure~\ref{fig:resultsSize} corresponds to an average of 15 portfolio optimization instances generated with parameters $r=10$, $\delta=0.01$ and $\alpha\in\{2,10,50\}$ (5 instances for each value of $\alpha$).  The time for \Supermodular includes the total time used to generate cuts and solving the convex relaxations many times.

\begin{figure}[!h]		
		\subfloat[\texttt{General} instances with $\rho=-1$.]{\includegraphics[width=0.49\textwidth,trim={11cm 5.8cm 11cm 5.8cm},clip]{./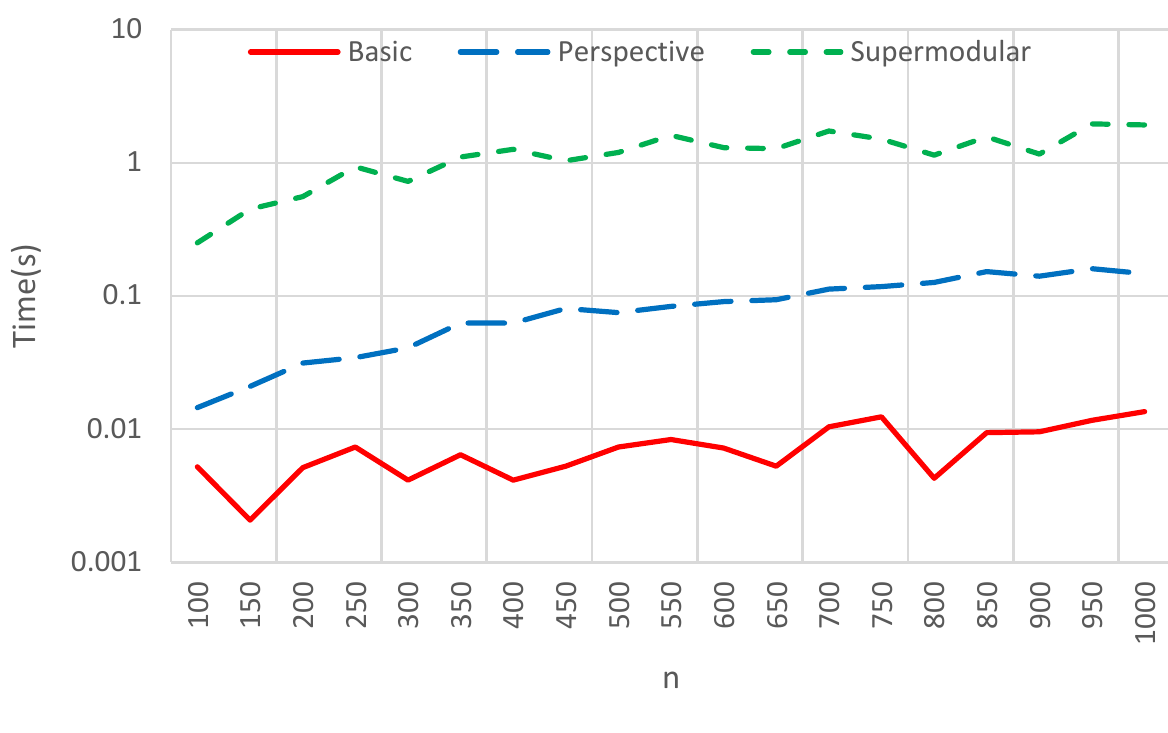}}\ \hfill
	\subfloat[\texttt{Positive} instances with $\rho=0$.]{\includegraphics[width=0.49\textwidth,trim={11cm 5.8cm 11cm 5.8cm},clip]{./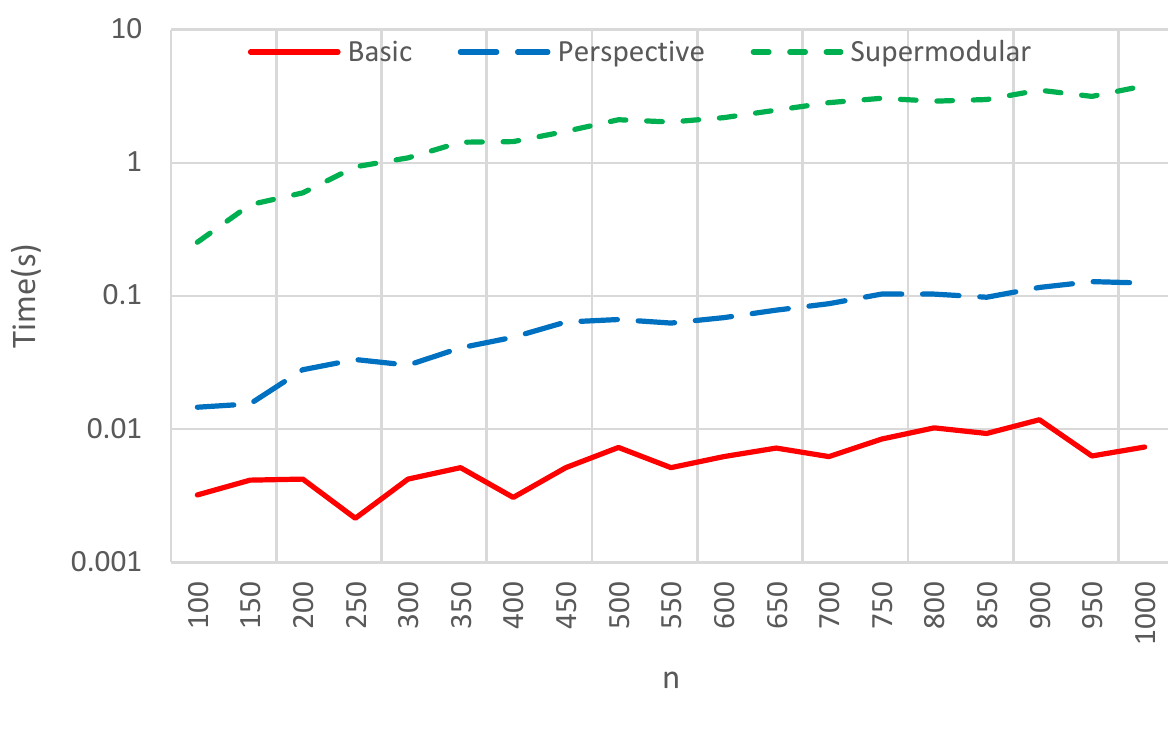}}
		\caption{Solution time  vs problem dimension ($r=10$, $\delta=0.01$).}
		\label{fig:resultsSize}
\end{figure}

We see that, in general, formulation \Basic is an order-of-magnitude faster than \texttt{Perspective}, which in turn is an order-of-magnitude faster than \texttt{Supermodular}. Nonetheless, the computation times for \Supermodular are adequate for many applications, solving instances with $n=1,000$ on average under four seconds. 

Contrary to expectations, \Supermodular is faster for \texttt{general} instances than for \texttt{positive} instances, despite the larger and more complex inequalities \eqref{eq:extendedGen} used for the \texttt{general} case; for $n=1,000$, \Supermodular runs in 1.9 seconds in \texttt{general} instances versus 3.8 seconds in \texttt{positive} instances.	This counter-intuitive behavior is explained by the number of cuts added, as several more violated cuts are found in instances with large values of $\rho$, leading to larger convex formulations and the need to resolve them more times; for $n=1,000$, 20 cuts are added in each instance with $\rho=0$, whereas on average only 3.7 cuts are added in instances with $\rho=-1$.  

The computation times are especially promising for tackling large-scale quadratic optimization problems with indicators, where alternatives to constructing strong convex relaxations (often based on decomposition of matrix $FF'+D$ into lower-dimensional terms) may not scale. For example, \citet{frangioni2018decompositions} solve convex relaxations of instances up to $n=50$, \citet{hga:2x2} solve relaxations for instances up to $n=150$, and \citet{atamturk2018strong} report that solving the convex relaxation of quadratic instances with $n=200$ requires up to 1,000 seconds. All of these methods require adding $O(n^2)$ variables and constraints to the formulations to achieve strengthening. In contrast,  the supermodular inequalities \eqref{eq:extendedGen} and \eqref{eq:extendedPos} yield formulations with $O(nr)$ additional variables and constraints, which can be solved efficiently even if $n$ is large provided that the rank $r$ is sufficiently small: in our computations, instances with $r=10$ and $n=1,000$ are solved in under four seconds. Nonetheless, as discussed in the next section, even if the convex relaxations can be solved easily, incorporating the proposed convexification in branch-and-bound methods may required tailored implementations, not supported by current off-the-shelf branch-and-bound solvers.

\subsection{On the performance with off-the-shelf branch-and-bound solvers}\label{sec:branchBound}
We also experimented with solving the formulations \Supermodular obtained after adding cuts with CPLEX branch-and-bound algorithm. However, note that inequalities \eqref{eq:extendedGen} and, to a lesser degree, inequalities \eqref{eq:extendedPos}, involve several ratios that can result in division by $0$ -- from the proof of Proposition~\ref{prop:correct}, we see that this in fact the case in many scenarios. Therefore, while we did not observe any particular numerical difficulties when solving the convex relaxations (via interior point methods), in a small subset of the instances we observed that the branch-and-bound method (based on linear outer approximations) resulted in numerical issues leading to incorrect solutions. 

Table~\ref{tab:pathological} reports the results on the two instances that exhibiting such pathological behavior. It shows, for each instance and method and different CPLEX settings, the bounds on the optimal solution obtained reported by CPLEX when solving the convex relaxation via interior point methods (\texttt{barrier}, corresponding to a lower bound), and lower and upper bounds reported by running the branch-and-bound algorithm for one hour. We do not scale the solutions obtained in Table~\ref{tab:pathological}. The tested settings are default CPLEX (\texttt{def}), default CPLEX with numerical emphasis enabled (\texttt{+num}), and CPLEX with numerical emphasis enabled and presolve and CPLEX cuts disabled (\texttt{+num-pc}).

\begin{table}[!h]
	\setlength{\tabcolsep}{2pt}
	\caption{Examples of pathological behavior in branch-and-bound.}
	\label{tab:pathological}	
	\begin{tabular}{l | c c | c c c   }
		\hline
		\multirow{2}{*}{\texttt{instance}} & \multirow{2}{*}{\texttt{method}}& \multirow{2}{*}{\texttt{setting}}&\multicolumn{3}{c}{\texttt{bounds}}\\
		&&&\texttt{barrier}&\texttt{lb\_bb}&\texttt{ub\_bb}\\
		\hline
		&\Perspective&def&0.0202&0.0942&0.0942\\
		&&&&&\\
		200-10-1.0-0.01&\multirow{3}{*}{\Supermodular}&def&0.0243&0.1249&0.1249\\
		-50.0-1-1-103$^\dagger$&&+num&0.0243&0.1078&0.1078\\
		&&+num-pc&0.0243&0.0942&0.0942\\
		&&&&&\\
		\hline

			&\Perspective&def&0.1950&0.4849&0.4849\\
		&&&&&\\
		200-10-1.0-0&\multirow{3}{*}{\Supermodular}&def&0.2471&0.4849&0.4849\\
		-50.0-1-1-104$^{\dagger\dagger}$&&+num&0.2471&0.4849&0.4849\\
		&&+num-pc&0.2471&0.5209&0.6629\\
		&&&&&\\
\hline
\multicolumn{5}{l}{\footnotesize $\dagger$ \texttt{General} portfolio instance with $\rho=-1$, $n=200$, $r=10$, $\delta=0.01$, $\alpha=50$}\\
\multicolumn{5}{l}{\footnotesize $\dagger\dagger$ \texttt{General} portfolio instance with $\rho=-1$, $n=200$, $r=10$, $\delta=0$, $\alpha=50$}
	\end{tabular}
\end{table}

In the first instance shown in Table~\ref{tab:pathological}, when using \Supermodular with the default CPLEX settings, the solution reported is worse than the optimal solution by 30\%. By enabling the numerical emphasis option, the solution improves but is still 10\% worse than the solution reported by \texttt{Perspective}. Nonetheless, if presolve and CPLEX cuts are disabled, then both solutions coincide. The second instance shown in Table~\ref{tab:pathological} exhibits the opposite behavior: when used with the default settings, independently of the numerical emphasis, the solutions obtained by \Perspective and \Supermodular coincide; however, if presolve and CPLEX cuts are disabled, then the lower bound obtained after one hour of branch-and-bound with the \Supermodular method already precludes finding the correct solution. We point out that pathological behavior of conic quadratic branch-and-bound solvers 
have been observed in the past for other nonlinear mixed-integer problems with a large number of variables, see for example \cite{atamturk2018strong,atamturk2018signal,frangioni2018decompositions,gomez2018strong}. 


\ignore{

\subsection{Future work}\label{sec:future} We now discuss three pointers for future research. \todo{These are good points but I am afraid they may encourage referees to ask for them for this paper.}

First, in general, there are infinitely many decompositions of a function $y'Qy$ as a sum of rank-one functions. In fact, in \cite{atamturk2019rank} the authors use weaker inequalities that do not account for negativity constraints, and that if used directly, as done in our computations, would likely have no impact at all. However, by finding an \emph{optimal} decomposition, the authors achieve substantial improvement over perspective reformulation approaches in real best subset selection instances. We find it highly encouraging that the inequalities reported in this paper achieve good improvements  (especially in \texttt{positive} instances) without any effort in finding decompositions. Thus, using a good decomposition may result in considerable improvements over simple perspective relaxations. The problem of decomposing matrix $Q$ is likely to involve semi-definite programming approaches or heuristics. We refer the reader to \cite{atamturk2019rank,atamturk2018signal,dong2015regularization,frangioni2018decompositions} for further information on this topic. 

Second, when tackling large quadratic optimization problems, it is critical to exploit the low-rank structure of the matrix $Q$ (as done in our computations) to speed-up solutions times. However, most decompositions techniques are unable to preserve or exploit this low-rank structure, and result in formulations with $O(n^2)$ additional variables and constraints that solvers are unable to solve efficiently for large $n$. For example, \citet{frangioni2018decompositions} use instances with a most $n=50$, and \citet{atamturk2018strong} report that just solving the convex relaxation of quadratic instances with $n=200$ requires up to 1,000 seconds. In contrast, when implemented as done in this paper (without decompositions), the supermodular inequalities \eqref{eq:extendedGen} and \eqref{eq:extendedPos} yield formulations with $O(nr)$ additional variables and constraints, which can be solved efficiently even if $n$ is large provide that the rank $r$ is sufficiently small: in our computations, problems with $r=10$ and $n=1,000$ are solved in under four seconds. Thus, the inequalities discussed here may be critical to tackling large-scale quadratic optimization problems with indicators that cannot be handled via existing decomposition approaches. \todo{The second point is more of an observation than future direction.}

Third, while solving the convex relaxations resulting from strong formulations for nonseparable quadratic and, more generally, nonlinear optimization problems with indicators, can be accomplished rather efficiently via interior point methods, the integration of such formulations with branch-and-bound methods remains a difficult yet important challenge, see \cite{atamturk2018signal,frangioni2018decompositions,gomez2018strong} for other examples. 
}

\section{Conclusions}\label{sec:conclusions}

In this paper we describe the convex hull of the epigraph of a rank-one quadratic functions with indicator variables. In order to do so, we first describe this convex hull of a underlying supermodular set function in a lower-dimensional space, and then maximally lift the resulting facets into nonlinear inequalities in the original space of variable. The approach is broadly applicable, as most of the existing results concerning convexifications of convex quadratic functions with indicator variables can be obtained in this way, as well as several well-known classes of facet-defining inequalities for mixed-integer linear problems.

\section*{Acknowledgments}
Alper Atamt\"urk is supported, in part, by NSF grant 1807260 and ONR grant 12951270.
A G\'omez is supported, in part, by NSF grants 1818700 and 1930582.

\bibliographystyle{spbasic}      
\bibliography{Bibliography}

\begin{appendix}
\section{}\label{sec:appendix}

\begin{proof}[Proof of Proposition~\ref{prop:validPos}]
	In order to solve problem \eqref{eq:liftingPos} we introduce an  auxiliary variable $\gamma\in \R_+$ such that  $\gamma=\max_\alpha(S^+)$. Then, inequality \eqref{eq:liftingPos} reduces to
	\begin{subequations}\label{eq:liftingOriginalPos}
		\begin{align}
		t\geq \max_{S\subseteq N}\max_{\alpha,\gamma}& -\frac{\gamma^2}{4}-\sum\limits_{i\in N\setminus S}\frac{\left(\alpha_i^2-\gamma^2\right)_+}{4}x_i+\alpha'y \label{eq:liftingOriginalPos_obj}\\
		\text{s.t.}\;& \alpha_i\leq \gamma, \ \  i\in S\label{eq:liftingOriginalPos_gamma}\\
		&\alpha\in \R_+^N,\;\gamma\in \R_+,\label{eq:liftingOriginalPos_bounds}
		\end{align}
	\end{subequations}
	where 
	constraint \eqref{eq:liftingOriginalPos_gamma} enforces the definition of $\gamma$. 
	
	Note that there exists an optimal solution for \eqref{eq:liftingOriginalPos} were $\gamma\leq \alpha_i$ for all $i\in N$: if $\alpha_i<\gamma$ for some $i\in N$, then setting $\alpha_i=\gamma$  yields a feasible solution with improved objective value. Therefore,  $S$ is completely determined by $\gamma$ since $S=\left\{i\in N: \alpha_i\leq\gamma\right\}$.
	
	Now, let $L=\left\{i\in N:\alpha_i=\gamma \right\}$ in a solution of \eqref{eq:liftingOriginalPos}. From the discussion above, we find that \eqref{eq:liftingOriginalPos} reduces to 
	\begin{subequations}\label{eq:liftingOriginalOptSetsPos}
		\begin{align}
		t\geq \max_{\alpha,\gamma}\;& \gamma \cdot y(L)-\frac{\gamma^2}{4}\big(1-x(N\setminus L)\big)+\sum\limits_{i\in N\setminus L}\left(\alpha_iy_i-\frac{\alpha_i^2}{4}x_i\right)\\
		\text{s.t.}\;& \gamma<\alpha_i &\hspace{-3cm}\forall i\in N\setminus L\label{eq:liftingOriginalOptSetsPos_constraint}\\
		&\alpha\in \R^N,\;\gamma\in \R_+.
		\end{align}
	\end{subequations}
	Observe that for $(L,\gamma)$ to  correspond to an optimal solution, we require that $1-x(N\setminus L)\geq 0$ (otherwise $\gamma$ can be increased and set to an upper bound while improving the objective value).
	When this condition is satisfied, we find by taking derivatives of the objective and setting to 0, that $\alpha_i=2y_i/x_i$ for $i\in  N\setminus L$ and $\gamma= 2 y(L)/\big(1-x(N\setminus L)\big)$, and \eqref{eq:liftingOriginalOptSetsPos} simplifies to \eqref{eq:originalSpacePos}.
	Note however that, in general, $(\alpha,\gamma)$ may not satisfy constraints \eqref{eq:liftingOriginalOptSetsPos_constraint} for any choice of sets $L\subseteq N$. The constraints are satisfied if and only if $\gamma<\alpha_i$ for all $i\in N\setminus L$, i.e., if and only if conditions  \eqref{eq:conditionsPos_validL} are satisfied.
		
	In order for $L$ to be optimal we require condition \eqref{eq:conditionsPos_optL}, i.e., 
	$$\frac{y(L)}{1-x(N\setminus L)}\geq \frac{y_i}{x_i}. \quad \forall i\in  L.$$
	Indeed, if this condition is not satisfied for some $j\in L$, then increasing $\alpha_j$ from $\gamma=2\frac{y(L)}{1-x(N\setminus L)}\geq 2\frac{y_j}{x_j}$ to $2y_j/x_j$ (or setting it to $\beta$ if $\beta<2y_j/x_j$) results in a better objective value. 
\end{proof}

	\end{appendix}

\end{document}